\crefname{equation}{}{} % Do not refer to equation (1) but to (1)
\theoremstyle{plain}
\newtheorem{thm}{Theorem}
\newtheorem{prop}{Proposition}[section]
\newtheorem{lem}[prop]{Lemma}
\newtheorem{cor}[prop]{Corollary}
\newtheorem{rmk}[prop]{Remark}
\newtheorem{example}[prop]{Example}
\newcommand {\R} {\mathbb{R}} \newcommand {\Z} {\mathbb{Z}}
\newcommand {\T} {\mathbb{T}} \newcommand {\N} {\mathbb{N}}
\newcommand {\p} {\partial}
\newcommand {\supp} {\text{supp}}
\newcommand {\rank} {\text{rank}}
\newcommand{\K}{\mathcal{K}}
\newcommand{\A}{\mathcal{A}}
\renewcommand{\AA}{\mathbb{A}}
\newcommand{\ran}{\text{ran}}
\DeclareMathOperator{\di}{div}
\DeclareMathOperator {\dist} {dist}
\DeclareMathOperator {\Per} {Per}
\DeclareMathOperator{\Id} {Id}
\DeclareMathOperator{\F} {\mathcal{F}}
\DeclareMathOperator{\vspan}{span}
\title[Scaling for the Two-State Problem and a $T_3$ Structure]{On Scaling Properties for Two-State Problems and for a Singularly Perturbed $T_3$ Structure}
\author{Bodgan Rai\cb{t}\u a}
\address{Scuola Normale Superiore, Centro di Ricerca Matematica Ennio De Giorgi,
P.za dei Cavalieri, 3, 56126 Pisa, Italy}
\email{bogdanraita@gmail.com}
\author{Angkana Rüland}
\address{Institut f\"ur Angewandte Mathematik, Ruprecht-Karls-Universit\"at Heidelberg, Im Neuenheimer Feld 205, 69120 Heidelberg, Germany}
\email{Angkana.Rueland@uni-heidelberg.de}
\author{Camillo Tissot}
\address{Institut f\"ur Angewandte Mathematik, Ruprecht-Karls-Universit\"at Heidelberg, Im Neuenheimer Feld 205, 69120 Heidelberg, Germany}
\email{Camillo.Tissot@uni-heidelberg.de}
\definecolor{HDred}{RGB}{198, 24, 38}
\definecolor{HDredDark}{RGB}{89, 13, 8}
\definecolor{PewterBlue}{RGB}{124, 158, 178}
\definecolor{PurpleNavy}{RGB}{82, 82, 140}
\definecolor{BudGreen}{RGB}{112, 174, 110}
\begin{document}

\begin{abstract}
In this article we study quantitative rigidity properties for the compatible and incompatible two-state problems for suitable classes of $\A$-free operators and for a singularly perturbed $T_3$ structure for the divergence operator. In particular, in the compatible setting of the two-state problem we prove that all homogeneous, first order, linear operators with affine boundary data which enforce oscillations yield the typical $\epsilon^{\frac{2}{3}}$-lower scaling bounds. As observed in \cite{CC15} for higher order operators this may no longer be the case. Revisiting the example from \cite{CC15}, we show that this is reflected in the structure of the associated symbols and that this can be exploited for a new Fourier based proof of the lower scaling bound. Moreover, building on \cite{Ruland22, Garroni04, Palombaro04}, we discuss the scaling behaviour of a $T_3$ structure for the divergence operator. We prove that as in \cite{Ruland22} this yields a non-algebraic scaling law.
\end{abstract}

\maketitle

\section{Introduction}
Rigidity and flexibility properties associated with (nonlinear) differential inclusions for the gradient have been objects of intensive study. They arise in a variety of applications, including the analysis of PDEs, e.g. the study of regularity of elliptic systems \cite{MS03, KMS03,S04}, fluid dynamics \cite{DLS09, DLS12}, geometry \cite{K55, N54,CDLS12, DLS15} and various settings in the materials sciences, e.g. the study of patterns in shape-memory alloys \cite{BJ89, BJ92, BFJK94, MS99, M99b,Rue16b}. Motivated by applications of microstructures in composites \cite{P10, PS09}, optimal design problems \cite{KW14, KW16,PW21} and micromagnetics \cite{KDOM06, DK06} as well as by recent developments on more general differential inclusion problems \cite{DPPR18,ST21,SW19}, in this article we study two instances of quantitative rigidity and flexibility properties of differential inclusions for more general operators. On the one hand, we consider \emph{constant-coefficient, homogeneous, linear differential operators} for which we discuss quantitative versions of the compatible and incompatible two-state problems.
On the other hand, we investigate quantitative properties of a $T_3$ structure for the \emph{divergence operator}.

\subsection{On quantitative results for the two-well problem for $\mathcal{A}$-free differential inclusions}

$\mathcal{A}$-free differential inclusions arise in many different settings, including linearized elasticity \cite{B03, B93},
 liquid crystal elastomers \cite{W07,CDPRZZ20} and the study of the Aviles-Giga functional \cite{LLP20, LLP22} to name just a few examples.
They have been systematically investigated in the context of compensated compactness theory in classical works such as \cite{T79, DP85, T83, MT18} but also in more recent literature on compensated compactness theory \cite{M99, FS08, GRS20, ARDPR20, R19, GRS21, R21}, in truncation results \cite{BGS21}, in classical minimization and regularity questions in the calculus of variations \cite{FM99,KR20, CG20, GLN22} and in the context of fine properties of such operators in borderline spaces \cite{DPR16,DPPR18, ARDPHR19, BDG20}. 
In the recent articles \cite{DPPR18, ST21} general $\mathcal{A}$-free versions of the incompatible two-well problem in borderline spaces and the study of $T_N$ structures have been initiated.
Motivated by these applications, in the first part of this article, we seek to study \emph{quantitative} versions of the compatible and incompatible two-state problems.

Before turning to the setting of general $\A$-free differential operators, let us recall the analogous ``classical'' setting for the gradient: Inspired by problems from materials science and phase transformations, the exact and approximate rigidity properties of differential inclusion problems for the gradient \cite{BJ89} (see also \cite{D07, DM12, P97, M99b}) with and without gauge invariance have been considered. For two energy wells without gauge invariances this amounts to the study of the differential inclusion 
\begin{align}
\label{eq:grad}
\nabla v \in \{A,B\} \mbox{ in } \Omega
\end{align}
for $A,B \in \R^{d\times d}, A \neq B$ and \(\Omega \subset \R^d\) a bounded Lipschitz domain. It is well-known that depending on the \emph{compatibility} of the wells, a dichotomy arises:
\begin{itemize} 
\item On the one hand, for \emph{incompatible} wells, i.e. if \(B-A \in \R^{d \times d}\) is not a rank-one matrix, the differential inclusion \eqref{eq:grad} is \emph{rigid} both for exact and approximate solutions:
Indeed, if $A,B$ are \emph{incompatible}, \eqref{eq:grad} only permits solutions with constant deformation gradient, which is in the following referred to as the rigidity of the two-state problem for \emph{exact} solutions. Moreover, in this setting one has that for sequences $\nabla u_k$ with $\dist(\nabla u_k, \{A,B\}) \rightarrow 0$ in measure, it necessarily holds that along a subsequence $\nabla u_k \rightarrow A$ or $\nabla u_k \rightarrow B$ in measure. We will refer to this as rigidity of the two-state problem for \emph{approximate} solutions.

We remark that various far-reaching generalizations of these results have been obtained: For settings with $SO(d)$ symmetries a quantitative version of such a result was deduced in \cite{CM04}; a further proof was found in \cite{DLS06}. The one-state problem with continuous symmetry group was studied in \cite{FJM02,LLP22b}.
 
 The article \cite{DPPR18} investigates a similar problem for two incompatible wells for a \emph{general constant coefficient, homogeneous, linear differential operator} $\mathcal{A}(D)$, providing qualitative rigidity results for the associated exact and approximate differential inclusions, including $L^1$-based frameworks.
\item On the other hand, if the wells are \emph{compatible}, i.e. if \(B-A \in \R^{d \times d}\) is a rank-one matrix, then \emph{simple laminate solutions} of \eqref{eq:grad} exist, in which the deformation gradient oscillates between the two fixed values $A,B$ and is a one-dimensional function depending only on the direction determined by the difference $B-A$. Due to the failure of rigidity on the exact level, also rigidity on the approximate level cannot be expected without additional regularization terms.
\end{itemize}

In the first part of this article we seek to consider \emph{quantitative}, $L^2$-based variants of these type of results for more general, linear differential operators $\A(D)$. In this context, we will consider the following two guiding questions:

\begin{itemize}
\item \textbf{Quantitative incompatible rigidity.} For a \emph{constant coefficient, homogeneous, linear differential operator} $\mathcal{A}(D)$ and two \emph{incompatible} wells, i.e. \(A,B \in \R^n\) such that \(B-A \notin \Lambda_\A\), cf. \eqref{eq:wave_cone} for the definition of the wave cone, do we have a quantitative rigidity result in terms of domain scaling for prescribed boundary data which are a convex combination of the two states?
Here the dimension $n$ depends on the operator $\A(D)$; for $\A(D) = \mbox{curl}$ (which corresponds to the gradient setting from \eqref{eq:grad} above) we would for instance consider $n = d \times d$.

More precisely, we seek to study the following question: Let $A,B \in \R^{n}$ be such that $B-A \notin \Lambda_{\mathcal{A}}$. Is it true that  
\begin{align*}
E_{el}(u,\chi)= \int\limits_{\Omega} |u - \chi_A A - \chi_B B|^2 dx \geq C(A,B,\lambda)|\Omega|,
\end{align*}
if  $u: \R^d \rightarrow \R^n$, $\mathcal{A(D)}u =0$ in $\R^d$, \(\chi := A \chi_A + B \chi_B \in \{A,B\}\) in \(\Omega\) and $\chi_{A}, \chi_B \in \{0,1\}$ with $\chi_{A}+\chi_B =1$ and if for some $\lambda \in (0,1)$ we have that $u=F_{\lambda} := \lambda A + (1-\lambda) B  $ in $\R^d\setminus \overline{\Omega}$?
\item \textbf{Quantitative compatible rigidity.} Let us next consider two \emph{compatible} wells $A,B \in \R^{n}$, i.e. let $A,B \in \R^{n}$ be such that $B-A \in \Lambda_{\mathcal{A}}$, see \eqref{eq:wave_cone} below, and let us again consider boundary data $F_{\lambda}:= \lambda A + (1-\lambda) B$ for some $\lambda \in (0,1)$ as above and with the set of admissible functions given by \(\mathcal{D}_{F_\lambda}\) in \eqref{eq:admissible}.
For a singularly perturbed energy similarly as in \eqref{eq:energy} is it true that as in \cite{KM92, KM94} also in the setting of a \emph{more general constant coefficient, homogeneous, linear differential operator} $\mathcal{A}(D)$ the following bound holds
\begin{align*}
\inf\limits_{\chi \in BV(\Omega;\{A,B\})}\inf\limits_{u\in \mathcal{D}_{F_{\lambda}}} (E_{el}(u,\chi) + \epsilon \int\limits_{\Omega}|\nabla \chi|) \geq C \epsilon^{2/3}?
\end{align*} 
\end{itemize}

In what follows, we will formulate the set-up, the relevant operator classes and our results on these questions.

\subsection{Formulation of the two-state problem for $\mathcal{A}$-free operators in bounded domains} \label{sec:formulation}

Following \cite{FM99,DPPR18,ST21}, we consider a particular class of \emph{linear, homogeneous, constant-coefficient operators}. The operator \(\A(D):  C^\infty(\R^d;\R^n) \to C^\infty(\R^d;\R^m)\) of order \(k \in \N\) is given in the form
\begin{align}
\label{eq:operator}
\mathcal{A}(D):= \sum\limits_{|\alpha|=k} A_{\alpha} \partial^{\alpha},
\end{align}
where $\alpha \in \N^{d}$ denotes a multi-index of length $|\alpha|:=\sum\limits_{j=1}^{d} \alpha_j$ and $A_{\alpha} \in \R^{m \times n}$ are constant matrices.
Seeking to study microstructure, in the sequel we are particularly interested in \emph{non-elliptic} operators. Here the operator $\mathcal{A}(D)$ is said to be \emph{elliptic} if its symbol 
\begin{align} \label{eq:Symbol}
\AA(\xi):= \sum\limits_{ |\alpha| = k} A_{\alpha} \xi^{\alpha}
\end{align}
is injective for all $\xi \neq 0$. If $\A(D)$ is not elliptic, there exist vectors $\xi \in \R^d \setminus \{0\}$ and $\mu \in \R^n \setminus \{0\}$ such that 
\begin{align}
\label{eq:lxi}
\AA(\xi) \mu= 0. 
\end{align}
The collection of these vectors $\mu \in \R^n \setminus \{0\}$ forms the \emph{wave cone} associated with the operator $\A(D)$: 
\begin{align}
\label{eq:wave_cone}
\Lambda_{\A} := \bigcup\limits_{\xi \in \mathbb{S}^{d-1}} \ker(\AA(\xi)).
\end{align}
The relevance of the wave cone $\Lambda_{\A}$ for compensated compactness and the existence of microstructure is well-known. For instance, for any pair $(\mu,\xi)$ as in \eqref{eq:lxi} it is possible to obtain $\A$-free \emph{simple laminate solutions}. These are one-dimensional functions $u(x):=\mu h(x \cdot \xi)$, where $h: \R \rightarrow \{0,1\}$, which obey the differential constraint \(\A(D) u = 0\) due to the choice of $\mu, \xi$ and \(u \in \{0,\mu\}\). More generally, if $k=1$, and for $\mu \in \Lambda_{\mathcal{A}}$ (see, for instance, \cite{BMS17}) it holds that 
\begin{align*}
u(x):= \mu h(x\cdot \xi_1, \dots, x\cdot \xi_{\ell})
\end{align*}
 is a solution to the differential equation $\mathcal{A}(D) u = 0$ for vectors $\xi_1, \dots, \xi_{\ell} \in \R^d\setminus \{0\}$ forming a basis of the vectorspace  
\begin{align} \label{eq:laminatDir}
V_{\mathcal{A}, \mu}:=\left\{
\xi \in \R^d: \  \AA(\xi) \mu = 0 \right\}.
\end{align}
For the row-wise curl operator (\(n=d \times d\)) this is an at most one-dimensional space, while for the row-wise divergence operator (\(n = m \times d\)), it is a space of possibly higher dimension as \(V_{\di,\mu} = \ker \mu\), leading to substantially more flexible solutions of the associated differential inclusions than for the curl.

In order to study microstructures arising as solutions to the two-state problem, for the above specified class of operators, analogously as in the gradient setting, we consider the following \(\A\)-free differential inclusion with prescribed boundary values:
\begin{align}
\label{eq:A-free}
\begin{split}
u & \in \mathcal{K} \mbox{ in } \Omega,\\
\A(D)  u &= 0 \mbox{ in } \R^d,\\
\end{split}
\end{align}
with $\mathcal{K} \subset \R^{n}$, and $\Omega \subset \R^d$ an open, bounded, simply connected domain, with appropriately prescribed boundary data.
For the two-state problem we consider \(\K = \{A,B\} \subset \R^n\).

Now, in analogy to the gradient setting, on the one hand, we call the differential inclusion for the two-state problem \eqref{eq:A-free} \emph{incompatible} if it is elliptic in the sense that $B-A \notin \Lambda_{\A}$. In this case it is proved in \cite{DPPR18} that both the exact and approximate differential inclusion \eqref{eq:A-free} are rigid.
We emphasize that incompatibility in particular excludes the presence of simple laminates.
On the other hand, the differential inclusion \eqref{eq:A-free} is said to be \emph{compatible} if $B-A \in \Lambda_{\A}$. In this case, also in the setting of more general operators, a particular class of solutions to \eqref{eq:A-free} consists of (generalized) simple laminates. Moreover, in the compatible setting, we further distinguish a particular case: We consider the subspace
\begin{align}\label{eq:I_A}
I_{\mathcal{A}} := \bigcap\limits_{\xi \in \R^d} \mbox{ker} (\mathbb{A}(\xi)) = \bigcap\limits_{|\alpha| = k} \mbox{ker} (A_{\alpha}).
\end{align}
This is the space of values that are (algebraically) unconstrained by $\mathcal{A}(D)$, meaning that for
all $u \in L^2(\R^d; I_{\mathcal{A}})$, we have that $\mathcal{A}(D)u = 0$ without taking any regularity constraints on $u$.
In the case of $I_{\mathcal{A}} = \{0\}$, the operator $\A(D)$ belongs to the class of \emph{cocanceling} operators, introduced in \cite{VS13}.

We seek to study both settings and the resulting microstructures \emph{quantitatively} in the spirit of scaling results as, for instance, in the following non-exhaustive list involving different physical applications \cite{KM92,KM94,CKO99, CDMZ20, K07, CKM22,KK11,KKO13,KW14,R16, CO12,CO09, RT21,RT22,Ruland22,RTZ19}.
To this end, for $\Omega \subset \R^d$ an open, bounded, Lipschitz set, we introduce elastic and surface energies and consider their minimization for prescribed, not globally compatible boundary data \(F_\lambda = \lambda A + (1-\lambda)B\) for some \(\lambda \in (0,1)\), where again the set of states is given by \(\K = \{A,B\}\).

Motivated by the applications from materials science, we study the following ``elastic energy''
\begin{align}
\label{eq:E_el}
E_{el}(u,\chi) : = \int\limits_{\Omega} |u - \chi|^2 dx,
\end{align}
which we minimize in the following admissible class of deformations
\begin{align}
\label{eq:admissible}
u \in \mathcal{D}_{F_{\lambda}}:= \big\{u \in L^2_{loc}( \R^d; \R^n): \ \mathcal{A}(D)u = 0 \mbox{ in } \R^d, \ u = F_{\lambda} \mbox{ in } \R^d \setminus \overline{\Omega}\big\}, \ \chi \in L^2(\Omega;\K),
\end{align}
where $k \in \N$ denotes the order of the operator $\A(D)$.
For ease of notation, here and in what follows, we often use the convention that \(\chi := \chi_A A + \chi_B B\) with \(\chi_A, \chi_B \in L^2(\Omega;\{0,1\})\) and \(\chi_A + \chi_B = 1\) in \(\Omega\).
Moreover, we further use the notation
\begin{align*}
 E_{el}(\chi;F_\lambda) := \inf_{u \in \mathcal{D}_{F_\lambda}} E_{el}(u,\chi).
\end{align*}

In addition to the ``elastic'' energy contributions, we also introduce a surface energy contribution of the form
\begin{align}
\label{eq:E_surf}
E_{surf}(\chi):= \int\limits_{\Omega}|\nabla \chi|, \quad \chi \in BV(\Omega;\K)
\end{align}
and consider the following singularly perturbed elastic energy for \(\epsilon >0\)
\begin{align} \label{eq:E-Total}
 E_\epsilon(u,\chi) := E_{el}(u,\chi) + \epsilon E_{surf}(\chi),
\end{align}
and correspondingly
\begin{align*}
 E_\epsilon(\chi;F_\lambda) := E_{el}(\chi;F_\lambda) + \epsilon E_{surf}(\chi) = \inf_{u \in \mathcal{D}_{F_\lambda}} E_\epsilon(u,\chi).
\end{align*}
We note that this can be defined for an arbitrary set of states \(\mathcal{K}\) and suitable boundary data \(F_{\lambda}\).

With these quantities in hand, we can formulate the following quantitative rigidity results for the two-state problems: 

\begin{thm} \label{thm:TwoWell}
Let \(d,n,m,k \in \N, n > 1\).
Let $\Omega \subset \R^d$ be a bounded Lipschitz domain, and let \(\A(D)\) be as in \eqref{eq:operator} with the wave cone \(\Lambda_\A\) given in \eqref{eq:wave_cone}.
Let $A,B \in \R^{n}$ and let $\chi = \chi_A A + \chi_B B \in L^2(\Omega;\{A,B\})$.
Further, let the elastic and surface energies \(E_{el}\), \(E_{surf}\) be given as in \eqref{eq:E_el} and \eqref{eq:E_surf}, respectively.
For \(\lambda \in (0,1) \) set \(F_\lambda = \lambda A + (1-\lambda)B \in \R^n\) and consider \(\mathcal{D}_{F_\lambda}\) as in \eqref{eq:admissible}. The following results hold:
\begin{itemize}
\item [(i)]\emph{Incompatible case:} Assume that $B-A \notin \Lambda_{\A}$. Then there is a constant \(C= C(A,B) >0\) such that,
\begin{align*}
\inf\limits_{\chi \in L^2(\Omega;\{A,B\})}\inf\limits_{u\in \mathcal{D}_{F_{\lambda}}} E_{el}(u,\chi) \geq C (\min\{\lambda, 1-\lambda\})^2|\Omega|.
\end{align*}
\item[(ii)] \emph{Compatible case:}
Assume that \(\A(D)\) is one-homogeneous, i.e. \(k=1\) in \eqref{eq:operator}, and that $B-A \in \Lambda_{\A} \setminus I_{\mathcal{A}}$. Then, there exist \(C= C(\A(D),A,B,\Omega, d, \lambda) >0\) and $\epsilon_0=\epsilon_0(\A(D),A,B,\Omega, d, \lambda)>0$ such that for \(\epsilon \in (0,\epsilon_0)\)
\begin{align*}
\inf\limits_{\chi \in BV(\Omega;\{A,B\})}\inf\limits_{u\in \mathcal{D}_{F_{\lambda}}} (E_{el}(u,\chi) + \epsilon E_{surf}(\chi)) \geq C \epsilon^{2/3}.
\end{align*}
Furthermore, if we assume \(\A(D) = \di\) and \(\Omega = [0,1]^d\), then there exists a constant \(c = c(A,B,\lambda) > 0\) such that for \(\epsilon > 0 \) we also have the matching upper bound
\begin{align*}
 \inf_{\chi \in BV(\Omega;\{A,B\})} \inf_{u \in \mathcal{D}_{F_\lambda}} (E_{el}(u,\chi) + \epsilon E_{surf}(\chi)) \leq c \epsilon^{2/3}.
\end{align*}
\item[(iii)] \emph{Super-compatible case:} Assume that $A-B \in I_{\mathcal{A}}$. Then,
\begin{align*}
\inf\limits_{\chi \in BV(\Omega;\{A,B\})}\inf\limits_{u\in \mathcal{D}_{F_{\lambda}}} (E_{el}(u,\chi) + \epsilon E_{surf}(\chi)) = 0.
\end{align*}
\end{itemize}
\end{thm}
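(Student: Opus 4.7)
My approach is a direct Plancherel estimate. Extending $u$ to $\R^d$ by $F_\lambda$, the residual $v:=u-F_\lambda$ is compactly supported in $\overline{\Omega}$ and $\A$-free; using $\chi_A+\chi_B=1$, one computes $F_\lambda-\chi=(\lambda-\chi_A)(A-B)$, so that for $\phi:=(\lambda-\chi_A)\chi_\Omega$ extended by $0$,
\begin{align*}
 \|u-\chi\|_{L^2(\Omega)}^2 = \|v+\phi(A-B)\|_{L^2(\R^d)}^2 \geq \int_{\R^d} |\hat\phi(\xi)|^2 \bigl|P_{\ker\AA(\xi)^\perp}(A-B)\bigr|^2 \, d\xi,
\end{align*}
since $\hat v(\xi)\in\ker\AA(\xi)$ a.e. The $k$-homogeneity of $\AA$ makes the weight depend only on $\xi/|\xi|$; the constant-rank hypothesis makes it continuous on $\mathbb{S}^{d-1}$, and the incompatibility $A-B\notin\Lambda_\A$ combined with compactness yields a uniform lower bound $c(A,B)>0$. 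The pointwise inequality $(\lambda-\chi_A)^2 \geq (\min\{\lambda,1-\lambda\})^2$ and Plancherel conclude the proof.

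\textbf{Compatible lower bound.} I would reuse the reduction to the scalar $\phi=(\chi_A-\lambda)\chi_\Omega$ and the weighted Plancherel estimate above. The new feature is that the weight $|P_{\ker\AA(\xi)^\perp}(A-B)|^2$ now vanishes precisely on $V\cap\mathbb{S}^{d-1}$, where $V=V_{\A,A-B}$ is as in \eqref{eq:laminatDir}. The first-order hypothesis $k=1$ makes $\AA$ linear in $\xi$, and constant rank then forces a quadratic degeneracy
\begin{align*}
|P_{\ker\AA(\xi)^\perp}(A-B)|^2 \gtrsim |A-B|^2\,\dist(\xi/|\xi|,V\cap\mathbb{S}^{d-1})^2.
\end{align*}
I would then split $\R^d\setminus\{0\}$ at two scales $r,R>0$: on the angularly good set $\{\dist(\xi/|\xi|,V)\geq r\}$ the weighted bound controls $\int|\hat\phi|^2$ by $r^{-2}E_{el}$; on the low-frequency tube $\{|\xi|\leq R,\,\dist(\xi/|\xi|,V)<r\}$, whose Lebesgue measure is $\lesssim R^{d} r^{d-\dim V}$, I use the trivial $|\hat\phi|\leq\|\phi\|_{L^1}\lesssim|\Omega|$; on the high-frequency tail $\{|\xi|\geq R\}$ I combine the pointwise decay $|\xi||\hat\phi(\xi)|\leq\|\nabla\phi\|_M\lesssim E_{surf}/|A-B|+C(\Omega)$ with the uniform bound $|\hat\phi|\lesssim|\Omega|$, splitting at the crossover scale $|\xi|_\ast\sim E_{surf}/|\Omega|$ to extract integrability. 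Combined with the bulk bound $\|\phi\|_{L^2}^2\gtrsim (\min\{\lambda,1-\lambda\})^2|\Omega|$, these estimates yield a coupled inequality of Kohn--Müller type, whose optimisation in $r,R$ gives $E_{el}+\epsilon E_{surf}\gtrsim\epsilon^{2/3}$; the threshold $\epsilon_0$ simply delimits the regime in which the bulk term dominates. I expect the most delicate step to be the high-frequency tail in dimension $d\geq 2$, where the naive pointwise BV bound is not $L^2_\xi$-integrable; the resolution is to exploit anisotropic BV control, e.g.\ bounding $\hat\phi$ by $\|\nabla_\perp\phi\|_M/|\xi_\perp|$ in directions transverse to $V$ when estimating the tail inside the bad tube.

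\textbf{Upper bound for divergence.} For $\A(D)=\di$ on $\Omega=[0,1]^d$ I would write down an explicit Kohn--Müller-type branching competitor. Since $A-B\in\Lambda_{\di}$ there exists $\nu\in\mathbb{S}^{d-1}$ with $(A-B)\nu=0$, so that the pure laminate alternating between $A$ and $B$ in strips of normal $\nu$ with volume fractions $\lambda$ and $1-\lambda$ is divergence-free, has zero elastic energy, and matches $F_\lambda$ on average; the only discrepancy with the boundary datum sits on the two faces of the cube transverse to $\nu$. There I insert a self-similar boundary layer of thickness $\ell\sim 1$, refined dyadically so that the transverse laminate period halves at each generation down to an initial scale $h_0$, and closed by a thin interpolation strip onto $F_\lambda$. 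A direct bookkeeping yields $E_{el}\lesssim h_0^{2}/\ell$ and $E_{surf}\lesssim\ell/h_0$ per unit face area, and balancing at $h_0\sim\epsilon^{1/3}$ delivers the matching bound $E_{el}+\epsilon E_{surf}\lesssim\epsilon^{2/3}$. The construction demands no new ingredient beyond the correct choice of the strip normal $\nu$.
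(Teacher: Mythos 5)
Your incompatible case argument is the same projection/Plancherel estimate as the paper's (Lemma~\ref{lem:AFreeElast} together with the ellipticity bound via singular values); that part is fine.

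For the compatible lower bound the overall strategy -- weighted Plancherel characterisation, degeneracy of the symbol along $V=V_{\A,A-B}$, and a two-scale frequency split of Kohn--M\"uller type -- is in the same family as the paper's. However, there is a genuine gap in your high-frequency step, and you are right to worry about it: the pointwise bound $|\xi|\,|\hat\phi(\xi)|\lesssim\|\nabla\phi\|_M$ yields $\int_{|\xi|\geq R}|\hat\phi|^2\,d\xi\lesssim\|\nabla\phi\|_M^2\int_R^\infty\rho^{d-3}\,d\rho$, which diverges for every $d\geq 2$; intersecting with the bad tube only contributes a factor $r^{d-\dim V}$ in the solid angle, which does not restore integrability at infinity. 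Your proposed fix (bounding $\hat\phi$ by $\|\nabla_\perp\phi\|_M/|\xi_\perp|$) is not a resolution: inside the tube $|\xi_\perp|\leq r|\xi|$ makes this \emph{worse}, and outside the tube that region is already handled by the elastic energy. The paper sidesteps the problem entirely by replacing the pointwise decay by an \emph{integrated} high-frequency surface-energy estimate, $\|\hat f\|^2_{L^2(\{|\xi|\geq\mu\})}\leq C\mu^{-1}(E_{surf}(f)+\Per(\Omega))$ (\cref{lem:aux}(b), from \cite{Ruland22}), which is the BV-analogue of $\dot H^{1/2}$-interpolation between $L^\infty$ and $TV$ and is $L^2$-summable without any restriction to the tube. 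With that substitution the rest of your optimisation gives the $\epsilon^{2/3}$ bound exactly as in the paper. Also note the paper works with the single set $\{|\Pi_V\xi|\leq\mu\}$ rather than the angular set $\{\dist(\xi/|\xi|,V)\geq r\}$, and correspondingly \cref{lem:aux}(a) controls that low-frequency region with a $\mu^2$ weight; your linear-in-distance angular degeneracy is the same ingredient in different clothing, since for a first-order symbol $|\AA(\xi/|\xi|)(A-B)|\sim |\Pi_{V^\perp}\xi|/|\xi|$.

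For the upper bound the branching competitor is indeed the right object, and the scaling bookkeeping matches \cref{prop:BranchingUpper}. Two cautionary remarks: the statement that the discrepancy with the boundary datum sits only on ``the two faces transverse to $\nu$'' is too optimistic -- the laminate also fails to equal $F_\lambda$ on the faces parallel to $\nu$, which is precisely why the paper spends \cref{prop:d_dim} glueing $d-1$ differently-oriented branchings with compatible error matrices $E_{e_j}$ across diagonal interfaces; and one must check the divergence-free condition at corners where several interfaces meet (\cref{lem:Corners}), which is not automatic from piecewise compatibility alone. Your sketch gives the correct $\epsilon^{2/3}$ order but those two points need to be filled in to produce an admissible competitor in $\mathcal{D}_{F_\lambda}$.
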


We highlight that in our discussion of the compatible case, we have restricted ourselves to operators of order one. This is due to the fact that for higher order operators it is expected that more complicated microstructures may arise.
This is also reflected in the Fourier space properties of the symbol \(\AA\). We refer to \cref{sec:higher}, see \cref{prop:symm_grad}, for a brief discussion of this, illustrating that the scaling may, in general, be no longer of the order $\epsilon^{\frac{2}{3}}$ in the higher order setting.

Let us discuss a prototypical example of the above results:

\begin{example}
\label{ex:div}
As an example of the above differential inclusion, we consider the case in which $\mathcal{A}(D) = \di: C^\infty(\R^d;\R^{m \times d}) \to C^\infty(\R^d;\R^m)$ row-wise for matrix fields \(u: \R^d \to \R^{m \times d}\). In this case the boundary value problem under consideration turns into the following differential inclusion:
\begin{align*}
\begin{split}
u & \in \{A,B\} \mbox{ in } \Omega, \\
\di  u &= 0 \mbox{ in } \R^d,\\
u & = F_{\lambda} \mbox{ in } \R^d \setminus \overline{\Omega},
\end{split}
\end{align*}
for some $F_{\lambda}:= \lambda A + (1-\lambda) B$, $\lambda \in (0,1)$.
Such differential inclusions are related to applications in shape-optimization as, for instance, in \cite{KW14}.

The divergence is applied row-wise to matrix fields \(u : \R^d \to \R^{m \times d}\), i.e. \(\A(D)u = \sum_{j=1}^d (\p_j u) e_j \) with the matrix vector product.
Hence \(\AA(\xi)M = M \xi\) for \(M \in \R^{m \times d}\) and thus the wave cone is given by
\begin{align*}
 \Lambda_{\di} = \{ M \in \R^{m \times d}: \text{there is } \xi \in \R^d \mbox{ with } M \xi = 0\}.
\end{align*}
Moreover the divergence is a cocanceling operator as $I_{\di} = \{0\}$.
\end{example}

We emphasize that \cref{ex:div} is indeed a prototypical example and plays a central role in the study of first order operators in that all first order operators can be reduced to this model operator by a suitable linear transformation, see \cite[Appendix]{ST21} and also \cref{sec:RoleDivergence} below. We emphasize that this reduction is particularly useful if the differential inclusion is \emph{incompatible} or if the boundary data are in $\Lambda_{\A} \setminus I_{\A}$. As a consequence, quantitative \emph{lower bound estimates} for \emph{incompatible} differential inclusions for first order operators, e.g. for $T_N$ structures as qualitatively studied in \cite{ST21}, or for \emph{compatible, but not super-compatible boundary data} can be deduced from the ones of the divergence operator (see \cref{prop:div_reduction} and, in general, the discussion in \cref{sec:RoleDivergence}). A reduction to an equivalent problem for a modified operator and modified boundary data to the setting involving a cocanceling operator will be discussed in \cref{sec:super}, see  \cref{prop:reduction_cocancelling} and \cref{cor:two-well_reduction}.

\subsection{Quantitative rigidity of a $T_3$ structure for the divergence operator} \label{sec:IntroT3}
In the second part of the article, building on the works \cite{Ruland22, Garroni04, Palombaro04} and motivated by the highlighted considerations on the role of the divergence operator, we study the quantitative rigidity of the $T_3$ configuration
\begin{align}
\label{eq:problem}
u \in \{A_1,A_2,A_3\} \mbox{ a.e. in  } \Omega, \quad  \di u = 0 \mbox{ in } \R^3,
\end{align}
where $\Omega = [0,1]^3$, $u: \R^3 \rightarrow \R^{3\times 3}$ and
\begin{align}
\label{eq:problem1}
A_1 = 0_{3 \times 3}, A_2 = \begin{pmatrix} - \frac{1}{2} & 0 & 0 \\ 0 & \frac{2}{3} & 0 \\ 0 & 0 & 3 \end{pmatrix}, A_3 = Id_{3 \times 3}.
\end{align}
By virtue of the results from \cite{Garroni04,Palombaro04} this problem is flexible for approximate solutions but rigid on the level of exact solutions: 
\begin{itemize}
\item More precisely, on the level of \emph{exact solutions} to \eqref{eq:problem}, only constant solutions $u \equiv A_j$ for $j\in \{1,2,3\}$ obey the differential inclusion. 
\item Considering however \emph{approximate solutions}, i.e. sequences $(u_k)_{k\in \N}$ such that
\begin{align*}
\dist(u_k, \{A_1,A_2,A_3\}) \rightarrow 0 \mbox{ in measure as } k \rightarrow \infty, \ \di u_k = 0 \mbox{ for all } k \in \N,
\end{align*}
there exists a sequence of approximate solutions $(u_k)_{k\in \N}$ for \eqref{eq:problem} such that there is no subsequence which converges in measure to one of the constant deformations $\{A_1,A_2,A_3\}$.
\end{itemize}
 Compared to the setting of the gradient, for the divergence operator rigidity for approximate solutions is already lost for the three-state problem (while this arises only for four or more states for the gradient \cite{T93, CK02}, see also \cite{M99b} for further instances in which the Tartar square was found and used).

As in \cite{Ruland22} we here study a \emph{quantitative} version of the dichotomy between rigidity and flexibility: We consider the singularly perturbed variant of \eqref{eq:problem} as in \cref{sec:formulation}
\begin{align}
\label{eq:energy}
E_{\epsilon}(u,\chi):= \int\limits_{\Omega}|u - \chi|^2 dx + \epsilon \int\limits_{\Omega} |\nabla \chi|
\end{align}
under the constraint $u\in \mathcal{D}_F$, cf. \eqref{eq:admissible}, with $\A(D) = \di$ and where $F\in \{A_1,A_2,A_3\}^{qc}$ (see \cref{sec:not} for the definition of the $\A$-quasi-convexification of a compact set).
Here the function $\chi \in BV(\Omega; \{A_1,A_2,A_3\})$ denotes the phase indicator of the ``phases'' $A_1,A_2,A_3$, respectively.

Adapting the ideas from \cite{Ruland22} to the divergence operator in three dimensions, we prove the following scaling result:

\begin{thm}
\label{thm:scaling_T3}
Let $\Omega = [0,1]^3$ and let \(\K = \{A_1,A_2,A_3\}\) with $A_j$, $j\in \{1,2,3\}$ given in \eqref{eq:problem1}, $F\in \K^{qc}\setminus \K$, $E_{\epsilon}$ be as in \eqref{eq:energy} above for the divergence operator \(\A(D) = \di\), and consider \(\mathcal{D}_F\) given analogously as \eqref{eq:admissible}. Then, there exist constants $c = c(F) >0$ and \(C = C(F) > 1\) such that for any $\nu \in (0,\frac{1}{2})$ there is $\epsilon_0 = \epsilon_0(\nu,F)>0$ and $c_{\nu}>0$ such that for $\epsilon \in (0,\epsilon_0)$ we have
\begin{align*}
C^{-1} \exp(-c_{\nu}|\log(\epsilon)|^{\frac{1}{2}+\nu}) \leq  \inf\limits_{\chi 
\in BV(\Omega;\K)} \inf\limits_{u \in \mathcal{D}_F} E_{\epsilon}(u,\chi) \leq C \exp(- c|\log(\epsilon)|^{\frac{1}{2}}).
\end{align*}
\end{thm}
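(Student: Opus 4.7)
The plan is to establish the matching bounds separately, following the general template developed in \cite{Ruland22} for the Tartar square, now adapted to the divergence operator in three dimensions. The crucial geometric feature of the $T_3$ configuration \eqref{eq:problem1} is that for every pair $i \neq j$ the difference $A_i - A_j$ is a non-degenerate diagonal matrix, hence $A_i - A_j \notin \Lambda_{\di}$: no direct first-generation lamination is possible between two pure phases. Nevertheless, the $T_3$ structure provides auxiliary points $F_1, F_2, F_3 \in \inte(\conv\{A_1,A_2,A_3\})$ such that $F_{i+1} - A_i \in \Lambda_{\di}$ (indices mod $3$), and this self-similar lamination geometry is what both estimates will exploit, leading to the non-algebraic scaling.

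For the upper bound, I would construct an iterated branching construction with infinitely many generations. Starting from the boundary datum $F$, one laminates once to create a mixture of some $A_j$ with an auxiliary state $F_{j+1}$ along the admissible direction $F_{j+1} - A_j \in \Lambda_{\di}$, realizing the transition via the divergence-free potential $\mathcal{B}(D)$ to avoid additional elastic error. Iterating this on each auxiliary layer yields, after $N$ generations, a divergence-free matrix field that equals $F$ near $\p \Omega$ and lies in $\{A_1,A_2,A_3\}$ outside a set of measure $(1-\lambda_*)^N$ (with $\lambda_* \in (0,1)$ determined by the $T_3$ splittings), while at the $k$-th generation the transition regions contribute perimeter of order $\rho_k^{-1}$ with $\rho_k$ the fineness at that level. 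Choosing the fineness schedule $\rho_k$ as in the Conti-type branching constructions to balance the elastic loss $(1-\lambda_*)^N$ against the cumulated surface energy $\epsilon \sum_{k \leq N}\rho_k^{-1}$, and optimizing the number of generations at $N \sim |\log \epsilon|^{1/2}$, produces $E_\epsilon \leq C \exp(-c |\log \epsilon|^{1/2})$.

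For the lower bound, the incompatibility of every pair $A_i, A_j$ makes the two-state estimate of Theorem \ref{thm:TwoWell} the key microscopic input. Following \cite{Ruland22}, I would set up a dyadic multi-scale decomposition of $\Omega$ at scales $\epsilon \ll r_k \ll 1$ and, on each cube $Q$ of scale $r_k$, classify the cube according to which phases $A_i$ appear with non-negligible mass fraction. On cubes where only two phases dominate, a rescaled application of the incompatible two-state bound of \cref{thm:TwoWell} produces an elastic contribution proportional to the minority mass squared times $r_k^3$; on cubes where all three phases appear with non-negligible mass, a co-area argument extracts surface energy at least $\epsilon/r_k$. Summing these contributions gives a recursive inequality relating $E_\epsilon$ at scale $r_k$ to $E_\epsilon$ at a finer scale $r_{k+1} = r_k^\theta$ with some $\theta \in (0,1)$, losing a logarithmic factor per step. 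Iterating this roughly $|\log\epsilon|^{1/2-\nu}$ times delivers the bound $C^{-1}\exp(-c_\nu |\log\epsilon|^{1/2+\nu})$.

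The hard part will be the lower bound: the recursion has to be tuned so that the combined elastic and surface losses per generation integrate to an exponent exactly matching $\frac{1}{2}+\nu$, which requires sharp control of the constants in the two-state estimate as functions of the pair $(A_i,A_j)$ and of the minority mass fraction, as well as careful book-keeping of the covering overlaps in the multi-scale decomposition. A secondary technical issue is that the divergence-free extension across $\p\Omega$ interacts with the affine boundary datum $F$; as in \cite{Ruland22}, this is handled by compactly supported truncations constructed from the potential $\mathcal{B}(D)$ of \cref{sec:formulation}, which introduce only a negligible boundary-layer correction.
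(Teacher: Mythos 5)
Your upper bound sketch is essentially the paper's construction: iterated laminations through the auxiliary states $S_1,S_2,S_3$ (which you call $F_j$; note the correct compatibility relation is $S_j-A_j\in\Lambda_{\di}$ with $\ker(S_j-A_j)=\vspan(e_j)$ and $S_j=\tfrac{1}{2}(A_{j+1}+S_{j+1})$, not $S_{j+1}-A_j\in\Lambda_{\di}$), realized through a curl potential and optimized at $N\sim|\log\epsilon|^{1/2}$ generations. The main technical content you do not spell out is the three-dimensional cut-off bookkeeping needed to keep the boundary data, which the paper handles carefully via a smoothed distance function, but the strategy is the same.

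Your lower bound plan, however, diverges substantially from the paper's proof and I believe it has a genuine gap. The paper's argument is a Fourier-space bootstrap: one shows $E_{el}^{per}(\chi;F)=\sum_k \sum_i |k_i/|k||^2|\hat\chi_{ii}|^2 + |\hat\chi(0)-F|^2$, so the diagonal entries $\chi_{ii}$ concentrate (up to elastic error) in cones $\{|k_i|\leq\mu|k|\}$, and the crucial \emph{nonlinear} polynomial relations $h_{j,i}(\chi_{ii})=\chi_{jj}$ (which exist precisely because the $A_j$ are diagonal and mutually incompatible) are used in a commutator estimate to iteratively shrink the truncation radius $\lambda_k$ of the cone, at the cost of a per-step constant $\sim C_0/\gamma^{24d}$. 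This is a quantification of the exact-rigidity argument $\p_j u_{jj}=0$, $h_{k,j}(u_{jj})=u_{kk}$, and it is what gives the non-algebraic scaling. Your proposal replaces all of this with a real-space dyadic covering argument using the pairwise-incompatible two-state bound of \cref{thm:TwoWell} as a local building block. There are two concrete obstructions. First, the incompatible two-state bound requires that the prescribed boundary datum be a convex combination of exactly the two states $A_i,A_j$; on a generic dyadic sub-cube the local average lies inside $\K^{qc}$ and is not of this form, so a ``rescaled application'' is not directly available, and because $\K^{qc}\setminus\K$ is nontrivial (the problem is flexible on the approximate level) a cube can have tiny minority mass fractions of two phases while supporting very little local elastic energy. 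Second, nothing in the pairwise-incompatibility structure distinguishes $T_3$ from, say, three generic incompatible matrices whose quasi-convex hull is just the three points; the latter would give a volume lower bound, while $T_3$ does not. The lower bound therefore cannot rest on pairwise incompatibility alone --- it must exploit the specific $T_3$ geometry (the cyclic relations among $A_j$ and $S_j$), which your multi-scale recursion never accesses. Unless you can articulate how the recursive inequality between scales $r_k$ and $r_{k+1}$ encodes this cyclic/diagonal structure (and not merely pairwise incompatibility), the proposal does not reach the stated lower bound.
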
 

Let us comment on this result: As in \cite{Ruland22} we obtain essentially matching upper and lower scaling bounds with less than algebraic decay behaviour as $\epsilon \rightarrow 0$, reflecting the infinite order laminates underlying the $T_3$ structure and the fact that the problem is ``nearly'' rigid. As in \cite{Ruland22} a key step is the analysis of a ``quantitative chain rule in a negative Sobolev space'' which results from the interaction of Riesz type transforms and a nonlinearity originating from the ``ellipticity'' of the differential inclusion. Both in the upper and the lower bound, these estimates for the divergence operator however require additional care due to the three-dimensionality of the problem. In the upper bound construction this is manifested in the use of careful cut-off arguments; in the lower bound, a more involved iterative scheme has to be used to reduce the possible regions of concentration in Fourier space.

Similarly, as in \cite{RT22} the scaling law from \cref{thm:scaling_T3} is obtained as a consequence of a rigidity estimate encoding both the \emph{rigidity} and \emph{flexibility} of the $T_3$ differential inclusion (in analogy to the $T_4$ case from \cite[Proposition 3]{RT22}).

\begin{prop}
\label{prop:rig_est}
Let $\Omega = [0,1]^3$ and let \(\K = \{A_1,A_2,A_3\}\) with $A_j$, $j\in \{1,2,3\}$ given in \eqref{eq:problem1}, $F\in \K^{qc}\setminus \K$. Let $\chi_{jj}$ denote the diagonal entries of the matrix $\chi \in BV(\Omega;\K)$ and denote by $E_{\epsilon}^{per}(\chi;F)$ the periodic singularly perturbed energy (see \eqref{eq:en_total_periodic} and \cref{sec:aux_lower}).
Then, there exists $\epsilon_0>0$ such that for any $\nu \in (0,1)$ there is a constant $c_{\nu}>0$ such that for $\epsilon \in (0,\epsilon_0)$ we have
\begin{align*}
\sum\limits_{j=1}^{3}\|\chi_{jj}-\langle \chi_{jj} \rangle\|_{L^2([0,1]^3)}^2 \leq   \exp( c_{\nu} |\log(\epsilon)|^{\frac{1}{2} + \nu}) E_{\epsilon}^{per}(\chi;F)^{\frac{1}{2}}.
\end{align*}
\end{prop}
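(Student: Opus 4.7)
The plan is to adapt the Fourier-based strategy of \cite{Ruland22} for the Tartar square to the divergence-free $T_3$ configuration in three dimensions, carefully tracking how the higher-dimensional geometry modifies the argument.

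First, I would reformulate the elastic energy on the Fourier side. Since every admissible $u$ satisfies $\di u = 0$ row-wise, in Fourier space $\hat u^{(j)}(\xi)\cdot \xi = 0$ for each row $j$. Minimising $\|u - \chi\|_{L^2}^2$ row by row over this affine constraint, and using that $A_1, A_2, A_3$ are all diagonal (so the off-diagonal entries of $\chi$ vanish), one obtains in the periodic setting
\begin{align*}
 E_{el}^{per}(\chi;F) \gtrsim \sum_{j=1}^3 \|R_j(\chi_{jj}-\langle\chi_{jj}\rangle)\|_{L^2(\T^3)}^2,
\end{align*}
where $R_j$ denotes the $j$-th Riesz projection. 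By Plancherel the surface energy controls $L^2$ mass of $\chi_{jj} - \langle \chi_{jj}\rangle$ at frequencies $|\xi| \gtrsim R$ by $\epsilon R \cdot E_{surf}(\chi)$. Combining these, $E_\epsilon^{per}$ controls the diagonal entries everywhere in frequency space \emph{except} in thin neighbourhoods of the coordinate plane $\{\xi_j = 0\}$, where the multiplier $\xi_j/|\xi|$ degenerates.

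Second, to treat those degenerate slabs I would exploit that each $\chi_{jj}$ takes only three values, say $\{0,a_j,1\}$ read off from \eqref{eq:problem1}, so that the cubic $p_j(s) := s(s-a_j)(s-1)$ satisfies $p_j(\chi_{jj}) \equiv 0$. Testing this identity against a Fourier multiplier localising to the slab $\{\xi_j \approx 0\}$ and combining it with a quantitative commutator chain rule in a negative Sobolev space in the spirit of \cite[Section 4]{Ruland22}, one gains an estimate that transfers Fourier mass of $\chi_{jj}$ out of the slab at the cost of a logarithmic loss depending on the slab thickness. Algebraically these three relations couple the three rows: Fourier mass of $\chi_{11}$ near $\{\xi_1 = 0\}$ is transferred, via the nonlinearity, to regions where $R_2$ or $R_3$ is nondegenerate, and this is exactly the encoding of the infinite-order lamination underlying the $T_3$ structure.

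Third, these two ingredients drive an iterative Fourier concentration scheme. At each step the admissible region in which Fourier mass of $\chi_{jj}-\langle\chi_{jj}\rangle$ can concentrate is shrunk from an annular neighbourhood of the union of coordinate planes by a fixed factor, paying a multiplicative loss that is at worst a power of the current length scale. After $N$ iterations the admissible region is concentrated near the origin, and balancing the multiplicative losses against the outer Plancherel cut-off at $|\xi| \sim \epsilon^{-1}$ forces $N \sim |\log\epsilon|^{1/2}$, producing the claimed prefactor $\exp(c_\nu |\log\epsilon|^{1/2+\nu})$. The main obstacle I expect is the very last reduction: in contrast to \cite{Ruland22}, where the degenerate loci were one-dimensional in $\R^2$, here they are two-dimensional planes in $\R^3$, so that contracting a neighbourhood of the union of three coordinate planes down to a ball around the origin requires a two-layered iteration, first localising to neighbourhoods of the coordinate axes $\{\xi_i = \xi_j = 0\}$ and only afterwards to the origin. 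Keeping track of the logarithmic losses through this nested iteration is what forces the slightly worse exponent $\frac{1}{2}+\nu$ and the restriction $\nu > 0$ in the statement.
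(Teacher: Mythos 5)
Your overall architecture matches the paper's: Fourier characterization of the elastic energy via Riesz projections, control of high frequencies by the surface energy, an iterated reduction of the admissible region of Fourier concentration, and a balancing at $N \sim |\log\epsilon|^{1/2}$ producing the prefactor $\exp(c_\nu|\log\epsilon|^{1/2+\nu})$. The recognition that the three-dimensional geometry (coordinate \emph{planes} as degenerate loci, rather than lines) forces a more intricate iteration is also correct in spirit.

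However, the key algebraic ingredient is misidentified, and this is a genuine gap. You propose to exploit that each $\chi_{jj}$ individually satisfies a cubic vanishing identity $p_j(\chi_{jj})\equiv 0$. This is true but cannot drive the Fourier mass transfer you describe: if $\hat\chi_{jj}$ is concentrated in a slab $\{|\xi_j|\lesssim\delta\}$, then for any $k$ the Fourier support of $\chi_{jj}^k$ is contained in the $k$-fold Minkowski sum $\{|\xi_j|\lesssim k\delta\}$ — still a slab around the \emph{same} coordinate plane — so the identity $p_j(\chi_{jj})=0$ gives no localization transversal to $\{\xi_j=0\}$. The mechanism you describe verbally ("Fourier mass of $\chi_{11}$ ... is transferred ... to regions where $R_2$ or $R_3$ is nondegenerate") cannot be realized by a per-row relation. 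What the paper actually uses (\cref{lem:ResultsRT21}, with explicit formulas in the subsequent Remark) are the \emph{cross-row} polynomial relations: since the triple of diagonal entries uniquely identifies which $A_k$ one is in, there exist quadratic polynomials $h_{j,i}$ with $h_{j,i}(\chi_{ii})=\chi_{jj}$ for $i\neq j$. It is precisely $\chi_{11}\approx h_{1,2}(m_{2,\mu,\lambda}(D)\chi_{22})$ that forces $\hat\chi_{11}$ to concentrate where $\hat\chi_{22}$ concentrates (up to the $d$-fold widening of the support by the Minkowski sum), and analogously for $\chi_{33}$. In \cref{lem:FirstComparison} the annulus $C_{1,2\mu,2\lambda}\setminus C_{1,2\mu,2\lambda'}$ is covered by $K_2\cup K_3$ and one compares $f_1$ with both $h_{1,2}(m_{2,\mu,\lambda}(D)f_2)$ and $h_{1,3}(m_{3,\mu,\lambda}(D)f_3)$; this double comparison at \emph{each} step of a single geometric iteration (rather than a two-layer axes-then-origin scheme) is the three-dimensional complication relative to the Tartar square, mirroring the fact that two comparisons are needed already to show rigidity of the exact $T_3$ inclusion in \cref{prop:rigid_exact}. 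Replace your per-row cubics by these cross-row interpolation polynomials and the rest of your outline goes through as in the paper.
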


We emphasize that in parallel to the setting of the Tartar square, this estimate quantitatively encodes both rigidity and flexibility of the differential inclusion, as it measures the distance to the constant state  (and thus reflects \emph{rigidity} of the exact differential inclusion) but also quantifies the ``price'' for this in terms of a ``high energy'' scaling law (and thus reflects the underlying flexibility of the approximate problem). Moreover, due to the \emph{flexibility} of the differential inclusion, we stress that such an estimate can only be inferred for a combination of elastic \emph{and} surface energies.

\subsection{Outline of the article}
The remainder of the article is structured as follows: After briefly recalling relevant notation and facts on convex hulls related to the operator $\A(D)$, we first discuss the compatible and incompatible two-state problems in \cref{sec:two-state}. Here we begin by discussing the incompatible setting, which is a consequence of direct elliptic estimates in \cref{sec:incompat} and then turn to the lower bounds for the compatible setting in \cref{sec:compat}. The super-compatible case is then treated in \cref{sec:super}. It is also in this section that we discuss a reduction to cocanceling operators. We revisit the scaling of a prototypical higher order operator from \cite{CC15} in \cref{sec:higher} and explain how our scheme of deducing lower bounds also yields a quick Fourier based proof of the lower scaling bound from \cite{CC15}.

In \cref{sec:T3} we then turn to the $T_3$ differential inclusion, for which we first prove upper bounds in \cref{sec:upper} and then adapt the ideas from \cite{Ruland22} to infer essentially matching lower bounds in  \cref{sec:lower}.

In the Appendix, we complement the general lower bounds for first order differential operators with upper bounds for the specific case of the divergence operator (\cref{sec:construc}). Moreover, in \cref{sec:ComparisonDivergenceProblem} we discuss the reduction to the divergence operator.

\section{Notation}
\label{sec:not}

In this section we collect the notation which is used throughout the article.

\begin{itemize}
\item For a set \(U\), we denote by \(d_U\) the (possibly smoothed-out) distance to this set: \(d_U(x) = \operatorname{dist}(x,U) = \inf_{y \in U}|x-y|\) and denote by \(\chi_U\) the (in some places smoothed-out) indicator function of this set.
\item For a finite set \(U\) and a function \(f: U \to \R^n\) we denote the mean by \(\langle f \rangle = \frac{1}{|U|} \int_U f dx\).
 \item For a function \(f \in L^2(\T^d)\) or \(f \in L^2(\R^d)\) we denote the Fourier transform by \(\mathcal{F}(f)(k) =  \hat{f}(k) =  (2 \pi)^{-\frac{d}{2}}\int_{\T^d} e^{- i k \cdot x} f(x) dx\) (\(k \in \Z^d\)) or \(\mathcal{F}(f)(\xi) = \hat{f}(\xi) =(2 \pi)^{-\frac{d}{2}} \int_{\R^d} e^{-i \xi \cdot x} f(x) dx\) (\(\xi \in \R^d\)).
 For a function \(h \in L^{\infty}(\R^d) \), we denote by \(h(D)\) the corresponding Fourier multiplier
 \begin{align*}
  h(D)f = \mathcal{F}^{-1}(h(\cdot)\hat{f}(\cdot)).
 \end{align*}
 \item We usually denote the phase indicator by \( \chi \in L^2(\Omega;\K)\), with the component functions given by \( \chi = (\chi_{i})\), $i\in\{1,\dots,n\}$. Moreover, we use \(F \in \K^{qc}\) (where $\K^{qc}$ is introduced below) as the exterior data.
 \item The set of admissible 'deformations' \(u\) for an operator of order $k\geq 1$ is given by \(\mathcal{D}_F := \mathcal{D}_F^{\A} := \{u \in L^2_{loc}(\R^d ; \R^n): \A(D) u = 0 \text{ in } \R^d, \ u = F \text{ in } \R^d \setminus \bar{\Omega}\}\). Here the equation $\A(D)u=0$ is considered in a distributional sense.
 \item As introduced in \cref{sec:formulation}, for \(F \in \K^{qc}\) we consider the elastic energy with \(u \in \mathcal{D}_F, \chi \in L^2(\Omega;\K)\):
 \begin{align} \label{eq:E-Elastic}
                                    E_{el}(u,\chi) = \int_\Omega |u-\chi|^2 dx, \ E_{el}(\chi;F) = \inf_{u \in \mathcal{D}_F} E_{el}(u,\chi),
                                   \end{align}

 and, for $\chi \in BV(\Omega;\mathcal{K})$, the surface energy as the total variation norm:
 \begin{align} \label{eq:E-Surface}
                        E_{surf}(\chi) = \Vert \nabla \chi \Vert_{TV(\Omega)} = \int_\Omega |\nabla \chi|.
                       \end{align}
 \item The total energy with \(\epsilon >0\) is given by \(E_{\epsilon}(u,\chi) = E_{el}(u,\chi) + \epsilon E_{surf}(\chi)\).
 \item We write \(f \sim g\) if there are constants \(c,C>0\) such that \(c f \leq g \leq C f\).
 \item We use the notation \(\Vert \cdot \Vert_{\dot{H}^{-1}}\) for the homogeneous \(H^{-1}\) semi-norm for \(f \in H^{-1}(\T^3;\R)\):
 \begin{align*}
  \Vert f \Vert_{\dot{H}^{-1}}^2 = \sum_{k \in \Z^3\setminus \{0\}} \frac{1}{|k|^2} |\hat{f}(k)|^2.
 \end{align*}

\end{itemize}
We further recall the notions of the $\Lambda_{\A}$-convex hull of a set (see \cite{BMS17}) and the \(\A\)-quasi-convex hull of a set:
\begin{itemize}
\item Let $\Lambda_{\A} \subset \R^{n}$ be the wave cone from \eqref{eq:wave_cone} and let $K\subset \R^{n}$ be a compact set. For $j\in \N$, we then define $K^{(j)}$ as follows:
\begin{align*}
K^{(0)} &:= K, \\ 
K^{(j)} &:=\{M \in \R^{n}: \ M = \lambda A + (1-\lambda) B: \ A,B \in K^{(j-1)}, \ B-A \in \Lambda_{\A}, \ \lambda \in [0,1]\}. 
\end{align*}
Moreover, we define the \(\Lambda_{\A}\)-convex hull \(K^{lc}\):
\begin{align*}
K^{lc}:= \bigcup\limits_{j=0}^{\infty} K^{(j)}.
\end{align*}
We define the \emph{order of lamination} of a matrix $M \in \R^{n}$ to be the minimal $j\in \N$ such that $M\in K^{(j)}$. 
In analogy to the gradient case, we will also refer to $K^{lc}$ as the \emph{laminar convex hull}.
\item We recall that the \emph{$\A$-quasi-convex hull} \(K^{qc}\) of a compact set \(K \subset \R^n\) is defined by duality to $\A$-quasi-convex functions \cite{FM99}. We recall that the $\di$-quasi-convex hull of the $T_3$ matrices from \eqref{eq:problem}, \eqref{eq:problem1} has been explicitly characterized in \cite[Theorem 2]{PS09} to consist of the union of the closed triangle formed by the matrices $S_1S_2S_3$ and the three ``legs'' formed by the line segments $A_j S_j$. The matrices $S_1,S_2,S_3 \in \R^{3\times 3}$ are introduced in \cref{sec:T3} below.
Given the set $\{A_1,A_2,A_3\} \subset \R^{3\times 3}$ from \eqref{eq:problem}, \eqref{eq:problem1}, as in the gradient case, we denote its $\A$-quasi-convex hull by $\{A_1,A_2,A_3\}^{qc}$.
\end{itemize}

\section{Quantitative Results on the Two-State Problem}
\label{sec:two-state}

In this section, we study quantitative versions of the two-state problem for general, linear, constant coefficient, homogeneous operators, always considering the divergence operator as a particular model case.

Building on the precise formulation of the problem from \eqref{eq:A-free}, in \cref{sec:elastic_energy}, we first characterize the elastic energies in terms of the operator $\mathcal{A}(D)$. With this characterization in hand, using ellipticity, we next prove the quantitative $L^2$ bounds in the \emph{incompatible} two-well case (\cref{sec:incompat}) and \emph{lower} $\epsilon^{\frac{2}{3}}$-scaling bounds for the \emph{compatible} case with first order, linear operators (\cref{sec:compat}). In \cref{sec:super}, we prove \cref{thm:TwoWell}(iii) and deduce a reduction to cocanceling operators.
In \cref{sec:higher} we briefly discuss the role of degeneracies in the symbol of the elastic energies which may arise for higher order, linear differential operators and which may thus lead to an alternative scaling behaviour different from the $\epsilon^{\frac{2}{3}}$-scaling bound.

Finally, in  \cref{sec:construc} we complement the lower bounds from this section with matching \emph{upper} bounds for the special case of $\A(D) = \di$. Similar constructions are also known for the gradient, the symmetrized gradient and lower dimensional problems from micromagnetics \cite{CC15,CKO99, OV10, CO09,CO12}.

\subsection{Elastic energy characterization}

\label{sec:elastic_energy}

We begin by recalling an explicit lower bound for the elastic energy in terms of the operator $\mathcal{A}(D)$ (see, for instance also \cite[discussion before Lemma 1.17]{KR20}) in an, for us, convenient form:

\begin{lem}[Fourier characterization of the elastic energy]
\label{lem:AFreeElast}
Let \(d,n,k \in \N\).
Let $\Omega \subset \R^d$ be a bounded Lipschitz domain with associated indicator function $\chi_{\Omega}$ and let \(\K \subset \R^n\).
Let \(\A(D) = \sum_{|\alpha|=k} A_\alpha \p^\alpha\) be as in \eqref{eq:operator} with the symbol \(\AA\), cf. \eqref{eq:Symbol}, and \(E_{el}\) be as in \eqref{eq:E-Elastic} and \(F \in \K^{qc}\).
Then there is a constant $c=c(\mathcal{A})>0$ such that for any \(\chi \in L^2(\Omega;\K)\)
\begin{align*}
 E_{el}(\chi;F) &\geq \inf_{\A(D) u = 0} \int_{\R^d} \Big| (u-F) - (\chi-F\chi_\Omega) \Big|^2 dx \\
 & = \int_{\R^d} \Big| \AA(\xi)^* \big(\AA(\xi) \AA(\xi)^*)^{-1} \AA(\xi) (\hat{\chi}-F\hat{\chi}_{\Omega}) \Big|^2 d\xi \geq c \int_{\R^d} \left| \AA(\frac{\xi}{|\xi|})(\hat{\chi}-F\hat{\chi}_\Omega) \right|^2 d\xi,
 \end{align*}
 where the infimum is taken over all \(u \in L^2_{loc}(\R^d ; \R^n)\) that fulfill \(\A(D) u = 0\) in \(\R^d\).
\end{lem}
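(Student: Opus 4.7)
The plan is to absorb the boundary data into a simpler free problem on $\R^d$ that Plancherel can diagonalize. For any $u \in \mathcal{D}_F$, set $v := u - F$. Since $F$ is constant and $\A(D)$ is homogeneous of order $k \geq 1$, one has $\A(D)F = 0$, so $v$ is $\A$-free with $v \equiv 0$ outside $\overline{\Omega}$. Setting $g := \chi - F\chi_\Omega$ (with $\chi$ extended by zero outside $\Omega$), a direct check shows $u - \chi = v - g$ on $\Omega$ while both sides vanish on $\Omega^c$, hence
\begin{align*}
\int_\Omega |u-\chi|^2\, dx = \int_{\R^d} |v - g|^2\, dx.
\end{align*}
Dropping the support constraint $v \equiv 0$ on $\Omega^c$ only enlarges the admissible class and therefore yields
\begin{align*}
E_{el}(\chi;F) \geq \inf\Bigl\{ \int_{\R^d}|v - g|^2\, dx : v \in W^{k,1}_{loc}(\R^d;\R^n),\ \A(D)v = 0 \text{ in } \R^d \Bigr\},
\end{align*}
which is the inequality displayed in the lemma.

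Next, apply Plancherel: the constraint $\A(D)v = 0$ translates to the pointwise algebraic condition $\AA(\xi)\hat{v}(\xi) = 0$ (up to a constant factor $i^k$), which decouples across frequencies. Pointwise minimization then selects the element of $\ker\AA(\xi)$ closest to $\hat{g}(\xi)$, namely $\hat{v}^*(\xi) := P_{\ker\AA(\xi)}\hat{g}(\xi)$, and leaves a residual $|P_{(\ker\AA(\xi))^\perp}\hat{g}(\xi)|^2$. The constant-rank hypothesis lets one write the orthogonal projection onto $(\ker\AA(\xi))^\perp = \ran \AA(\xi)^*$ explicitly as $\AA(\xi)^*(\AA(\xi)\AA(\xi)^*)^{-1}\AA(\xi)$ (interpreted via the Moore--Penrose pseudo-inverse if $\AA(\xi)$ fails to be surjective), and substituting $\hat{g} = \hat{\chi} - F\hat{\chi}_\Omega$ produces the explicit Fourier identity.

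For the equivalence $\sim |\AA(\xi/|\xi|)\hat{g}|^2$, the $k$-homogeneity of $\AA$ ensures that the projection $P(\xi) := \AA(\xi)^*(\AA(\xi)\AA(\xi)^*)^{-1}\AA(\xi)$ depends only on the direction $\omega := \xi/|\xi|$. Both quadratic forms $\hat{g} \mapsto |P(\omega)\hat{g}|^2$ and $\hat{g} \mapsto |\AA(\omega)\hat{g}|^2$ vanish precisely on $\ker\AA(\omega)$ and are strictly positive on its orthogonal complement. Writing $\hat{g} = P(\omega)\hat{g} + (I - P(\omega))\hat{g}$, one has $\AA(\omega)\hat{g} = \AA(\omega)P(\omega)\hat{g}$, and hence $|\AA(\omega)\hat g|$ is sandwiched between $\sigma_{\min}(\omega)|P(\omega)\hat{g}|$ and $\|\AA(\omega)\||P(\omega)\hat{g}|$, where $\sigma_{\min}(\omega)$ denotes the least non-zero singular value of $\AA(\omega)$. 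The constant-rank assumption together with compactness of $\mathbb{S}^{d-1}$ yield uniform positive bounds on $\sigma_{\min}(\omega)$ and $\|\AA(\omega)\|$, giving the claimed equivalence after integration.

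The main technical subtlety is to verify that the pointwise Fourier minimizer $\hat{v}^*$ indeed corresponds to an honest distributional solution of $\A(D)v = 0$ in $W^{k,1}_{loc}$; this is routine via $L^2$-closedness of the $\A$-free subspace and the Sobolev regularity of $v$ inherited from the ellipticity of the projection symbol off $\ker\AA$. The apparent equality sign in the middle of the lemma should be read in this spirit: the rigorous content is the chain of the support-relaxation inequality followed by the exact Plancherel identity for the relaxed problem.
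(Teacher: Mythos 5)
Your proposal is correct and follows essentially the same route as the paper's own proof: shift to $w=u-F$, rewrite the elastic energy as a whole-space $L^2$ distance to $\tilde\chi=\chi-F\chi_\Omega$, relax the support constraint on $w$, minimize pointwise in Fourier space over $\ker\AA(\xi)$ using constant rank to write the projection as $\AA(\xi)^*(\AA(\xi)\AA(\xi)^*)^{-1}\AA(\xi)$, and then sandwich $|\AA(\xi/|\xi|)\hat{\tilde\chi}|$ between singular-value bounds uniform over $\mathbb{S}^{d-1}$. Your closing observation -- that the displayed equality is really the support-relaxation inequality followed by the exact Plancherel identity for the relaxed whole-space problem -- is a fair and accurate reading of what the paper actually proves.
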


\begin{proof}
The proof follows from a projection argument in Fourier space.
 
\emph{Step 1: Whole space extension, Fourier and pseudoinverse of the differential operator.}
We begin by transforming our problem to one on the whole space \(\R^d\), introducing the whole space extension \(w := u - F\):
 \begin{align*}
  E_{el}(u,\chi) & = \int_\Omega \Big| u - \chi \Big|^2 dx = \int_\Omega \Big| u - F - (\chi-F) \Big|^2 dx \\
  & = \int_{\R^d} \Big| w - (\chi - F \chi_\Omega) \Big|^2 dx,
 \end{align*}
 where we have extended all functions in the integrand by zero outside of $\Omega$.

 In the following,  we write \(\tilde{\chi} := \chi-F\chi_{\Omega}\). Fourier transforming the expression for the elastic energy then leads to
 \begin{align*}
  E_{el}(u,\chi) = \int_{\R^d} \Big| \hat{w} - \hat{\tilde\chi} \Big|^2 d\xi.
 \end{align*}
Further, seeking to deduce a lower bound, we neglect the boundary data for $w$, obtaining 
\begin{align*}
 E_{el}(\chi;F) = \inf_{u \in \mathcal{D}_{F}} \int_{\Omega} |u-\chi|^2 dx = \inf_{w \in  \mathcal{D}_{0}} \int_{\R^d} |w-\tilde{\chi}|^2 dx \geq \inf_{\A(D) w = 0} \int_{\R^d} |\hat{w} - \hat{\tilde{\chi}}|^2 d\xi.
\end{align*}
Minimizing the integrand \(\hat{w}\) (and still denoting the minimizer by \(\hat{w}\)) for each fixed mode $\xi\in \R^d \setminus \{0\}$, we infer that
\begin{align*}
 \Big| \hat{w}(\xi) - \hat{\tilde\chi}(\xi) \Big| = \Big|  \Pi_{\ker \AA(\xi)} \hat{\tilde\chi}(\xi) - \hat{\tilde\chi}(\xi) \Big| = \Big| \Pi_{\ran \AA(\xi)^*} \hat{\tilde\chi}(\xi) \Big| =  \Big| \AA(\xi)^* \big(\AA(\xi) \AA(\xi)^*)^{-1} \AA(\xi) \hat{\tilde\chi}(\xi) \Big|,
\end{align*}
where we view \(\AA(\xi) \AA(\xi)^*: \ran \AA(\xi) \to \ran\AA(\xi)\) as an isomorphism.
This directly implies the claimed lower bound for $E_{el}(\chi;F)$ in terms of the symbol $\AA$ and its pseudoinverse.

\emph{Step 2: Proof of the final estimate.}
Now to show the final estimate for the elastic energy, we seek to bound $|\mathbb{A}(\xi)^\ast (\mathbb{A}(\xi)\mathbb{A}(\xi)^\ast)^{-1} \mathbb{A}(\xi) x|$ for every $\xi \in \mathbb{R}^d$, $x\in \mathbb{R}^n$ from below in terms of $|\mathbb{A}(\xi) x|$.
As the projection operator is zero-homogeneous, we can reduce to $\xi \in \mathbb{S}^{d-1}$, and thus can use the continuity of $ \mathbb{S}^{d-1} \ni \xi \mapsto \mathbb{A}(\xi)$ and the compactness of $ \mathbb{S}^{d-1}$ for the desired bound: For any $x\in \mathbb{R}^n, \xi \in \mathbb{S}^{d-1}$ it hence holds
\begin{align*}
 \Big| \mathbb{A}(\xi) x \Big| = \Big| \mathbb{A}(\xi) \mathbb{A}(\xi)^\ast (\mathbb{A}(\xi)\mathbb{A}(\xi)^\ast)^{-1} \mathbb{A}(\xi) x \Big| \leq \Big| \mathbb{A}(\xi)^\ast (\mathbb{A}(\xi)\mathbb{A}(\xi)^\ast)^{-1} \mathbb{A}(\xi) x \Big| \sup_{\xi \in \mathbb{S}^{d-1}}(|\mathbb{A}(\xi)|).
\end{align*}
As $\mathcal{A}(D) \neq 0$, we have that $0<\sup_{\zeta \in \mathbb{S}^{d-1}}|\mathbb{A}(\zeta)|\leq C < \infty$. Dividing by this and plugging this into the expression with the pseudoinverse, we obtain
\begin{align*}
E_{el}(u,\chi)&\geq \int\limits_{\R^d}   \Big| \AA(\xi)^* \big(\AA(\xi) \AA(\xi)^*)^{-1} \AA(\xi) \hat{\tilde\chi} \Big|^2 d\xi
\geq \frac{1}{\sup_{\zeta \in \mathbb{S}^{d-1}}|\mathbb{A}(\zeta)|^2} \int\limits_{\R^d}   \Big|\AA(\frac{\xi}{|\xi|}) \hat{\tilde\chi} \Big|^2 d\xi,
\end{align*}
which concludes the argument.
\end{proof}

We emphasize that we are neglecting the boundary conditions for \(\hat{w}(\xi) := \Pi_{\ker \AA(\xi)} \hat{\tilde\chi}(\xi)\) as we do not calculate the projection of \(\chi\) onto \(\mathcal{D}_F\), hence the above Fourier bounds only provide lower bounds for the elastic energy.

We apply the lower bound from \cref{lem:AFreeElast} to the two-well problem:

\begin{cor} \label{cor:ElasticEnergyTwoWell}
Let \(d,n \in \N\).
Let \(\Omega,\A(D),\AA, E_{el}\) be as in \cref{lem:AFreeElast}.
 Consider \(\K = \{A,B\} \subset \R^n\) with \(F_\lambda = \lambda A + (1-\lambda)B\) for $\lambda \in (0,1)$.
 Then there exists a constant \(C = C(\A(D)) > 0\) such that for any \(\chi = \chi_A A + \chi_B B \in L^2(\Omega;\K)\), extended to \(\R^d\) by zero, it holds
 \begin{align*}
  E_{el}(\chi;F_\lambda) \geq C \int_{\R^d} \left| ((1-\lambda)\hat{\chi}_A - \lambda \hat{\chi}_B) \AA(\frac{\xi}{|\xi|}) (A-B) \right|^2 d\xi.
 \end{align*}
\end{cor}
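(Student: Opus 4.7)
The plan is to apply \cref{lem:AFreeElast} directly and then unpack the structure of \(\chi - F_\lambda \chi_\Omega\) as a scalar multiple of the fixed vector \(A-B\). Since the lower bound in \cref{lem:AFreeElast} is already formulated in terms of the symbol \(\AA(\xi/|\xi|)\) acting on \(\widehat{\chi - F_\lambda \chi_\Omega}\), essentially all that is needed for the corollary is an algebraic simplification of this quantity inside the integrand.

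More concretely, the first step is to write \(\chi = \chi_A A + \chi_B B\) on \(\Omega\) and use that \(\chi_A + \chi_B = \chi_\Omega\) in \(\Omega\), so that the extension by zero of \(\chi\) to \(\R^d\) satisfies
\begin{align*}
 \chi - F_\lambda \chi_\Omega &= \chi_A A + \chi_B B - \bigl(\lambda A + (1-\lambda)B\bigr)(\chi_A + \chi_B) \\
 &= \bigl((1-\lambda)\chi_A - \lambda \chi_B\bigr)(A-B)
\end{align*}
pointwise in \(\R^d\). Taking the Fourier transform, this factorizes cleanly as
\begin{align*}
 \widehat{\chi - F_\lambda \chi_\Omega}(\xi) = \bigl((1-\lambda)\hat{\chi}_A(\xi) - \lambda\hat{\chi}_B(\xi)\bigr)(A-B),
\end{align*}
so that \(\AA(\xi/|\xi|)\) acts only on the fixed vector \(A-B\) and the scalar prefactor can be pulled out of the norm.

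The second step is to plug this identity into the lower bound of \cref{lem:AFreeElast}, yielding
\begin{align*}
 E_{el}(\chi;F_\lambda) \gtrsim \int_{\R^d} \Bigl| \AA\bigl(\tfrac{\xi}{|\xi|}\bigr) \bigl((1-\lambda)\hat{\chi}_A - \lambda \hat{\chi}_B\bigr)(A-B) \Bigr|^2 d\xi,
\end{align*}
which is exactly the stated estimate once the scalar factor is moved outside the matrix action. The implicit constant only depends on the comparability constants arising from the constant-rank assumption on \(\A(D)\), which were already absorbed in the \(\sim\) in \cref{lem:AFreeElast}.

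I do not expect any genuine obstacle here: the result is a short consequence of the Fourier characterization of \(E_{el}\) combined with the two-state algebraic identity above. The only point to be a little careful about is the extension by zero outside \(\Omega\), so that the relation \(\chi_A + \chi_B = \chi_\Omega\) is used as an identity on all of \(\R^d\) before Fourier transforming; this ensures that no stray boundary contributions spoil the factorization of the integrand.
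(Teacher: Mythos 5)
Your proof is correct and follows essentially the same route as the paper: apply the Fourier lower bound from \cref{lem:AFreeElast} and simplify \(\chi - F_\lambda\chi_\Omega\) into the scalar multiple \(\bigl((1-\lambda)\chi_A - \lambda\chi_B\bigr)(A-B)\) using \(\chi_A+\chi_B=\chi_\Omega\). Your extra remark about carrying out this algebraic identity on all of \(\R^d\) before Fourier transforming is a reasonable clarification but does not change the argument.
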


\begin{proof}
 Using the expression of $F_{\lambda}$ in terms of $A,B,\lambda$ yields \(A- F_\lambda = (1-\lambda)(A-B)\) and \(B-F_\lambda = - \lambda (A-B)\).
 Thus, the fact that \(\chi = \chi_A A + \chi_B B\) and \cref{lem:AFreeElast} imply
 \begin{align*}
  E_{el}(\chi;F_\lambda) \geq C\int_{\R^d} \left| \AA(\frac{\xi}{|\xi|}) (\hat{\chi}-F_\lambda \hat{\chi}_\Omega ) \right|^2 d\xi = C \int_{\R^d} \left|((1-\lambda) \hat{\chi}_A - \lambda \hat{\chi}_B) \AA(\frac{\xi}{|\xi|}) (A-B) \right|^2 d\xi.
 \end{align*}
\end{proof}

 \begin{rmk}[The divergence operator]
  As seen in \cref{ex:div}, in the case of \(\A(D) = \di\), we have for \(u \in C^\infty(\R^d;\R^{d \times d})\) (note that we chose square matrices out of simplicity)
  \begin{align*}
   \A(D) u = \sum_{i=1}^d (\p_i u)e_i \in C^\infty(\R^d;\R^d).
  \end{align*}
  With this we can calculate
  \begin{align*}
   \AA(\xi)M  = \sum_{i=1}^d \xi_i M e_i = M \xi . 
  \end{align*}
  This, in particular, shows that the adjoint operator is given by \(\AA(\xi)^*: \R^d \to \R^{d \times d}\) with
  \begin{align*}
   \AA(\xi)^* x = x \otimes \xi.
  \end{align*}
Therefore \( \AA(\xi)\AA(\xi)^* x = (x \otimes \xi) \xi = |\xi|^2 x \), and the projection in the lower bound for the elastic energy of \cref{lem:AFreeElast} takes the desired form
  \begin{align*}
   \AA(\xi)^* \Big( \AA(\xi) \AA(\xi)^* \Big)^{-1} \AA(\xi) M = \frac{1}{|\xi|^2} (M\xi \otimes \xi).
  \end{align*}
  Furthermore it holds
  \begin{align*}
   \Big| \AA(\xi)^* \Big( \AA(\xi) \AA(\xi)^* \Big)^{-1} \AA(\xi) M \Big| = \Big| \AA(\frac{\xi}{|\xi|})M \Big|.
  \end{align*}
 \end{rmk}

\subsection{The incompatible two-well problem and scaling}
\label{sec:incompat}

As a first application of the Fourier characterizations from the previous section, we prove a quantitative lower bound for the incompatible two-well problem. 
We emphasize that -- as in \cite{DPPR18} -- this argument is an elliptic argument and thus can be applied to all linear, constant coefficient homogeneous operators. Indeed, the following result holds:

 \begin{prop}
 Let \(d,n \in \N\).
  Let $\Omega \subset \R^d$ be a bounded Lipschitz domain, let \(\A(D)\) be given in \eqref{eq:operator} and \(A,B \in \R^n\) with \(B-A \notin \Lambda_\A\), cf. \eqref{eq:wave_cone}, further let \(F_\lambda = \lambda A+ (1-\lambda)B\) for some $\lambda \in (0,1)$, and let $E_{el}$ be as in \eqref{eq:E-Elastic} with $\mathcal{D}_{F_\lambda}$ given in \eqref{eq:admissible}. Then there is \(C = C(A,B,\lambda, \A(D)) > 0\), such that for any \(\chi \in L^2(\Omega;\{A,B\})\)
  \begin{align*}
   \inf\limits_{u \in \mathcal{D}_{F_{\lambda}}} E_{el}(u,\chi) \geq C |\Omega|.
  \end{align*}
 \end{prop}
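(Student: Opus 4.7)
The plan is to reduce the estimate to the Fourier-side lower bound of \cref{cor:ElasticEnergyTwoWell} and then exploit that, by the incompatibility assumption, the symbol $\AA(\cdot)(A-B)$ is uniformly bounded away from zero on the unit sphere. The whole argument is then a two-line elliptic estimate combined with the fact that $\chi_A$ takes only the values $0$ and $1$.

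First, I would apply \cref{cor:ElasticEnergyTwoWell} directly to the admissible $\chi \in BV(\Omega;\{A,B\})$, extended by zero outside of $\Omega$, to get
\begin{align*}
 E_{el}(\chi;F_\lambda) \geq C \int_{\R^d} |((1-\lambda)\hat\chi_A - \lambda \hat\chi_B)(\xi)|^2 \,|\AA(\xi/|\xi|)(A-B)|^2 \, d\xi.
\end{align*}
The incompatibility hypothesis $B-A \notin \Lambda_{\A}$ means precisely that $\AA(\xi)(A-B) \neq 0$ for every $\xi \in \mathbb{S}^{d-1}$. Since the symbol $\AA$ is polynomial in $\xi$, the map $\xi \mapsto |\AA(\xi)(A-B)|^2$ is continuous and strictly positive on the compact sphere, so it admits a strictly positive infimum $c = c(A,B,\A(D)) > 0$. (Note that the constant-rank assumption is not actually needed here; only the pointwise non-vanishing is used.)

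Inserting this uniform lower bound and applying Plancherel, I would pass back to the physical side. Using $\chi_A + \chi_B = 1$ on $\Omega$ and the zero extension outside, the function $(1-\lambda)\chi_A - \lambda \chi_B$ equals $\chi_A - \lambda$ on $\Omega$ and vanishes outside, so
\begin{align*}
 E_{el}(\chi;F_\lambda) \geq C c \int_{\Omega} (\chi_A - \lambda)^2 \, dx.
\end{align*}
Since $\chi_A \in \{0,1\}$ pointwise in $\Omega$, the integrand takes values in $\{\lambda^2, (1-\lambda)^2\}$, hence is pointwise bounded below by $\min(\lambda,1-\lambda)^2$. Integrating yields the desired estimate with constant $C(A,B,\lambda,\A(D)) = Cc\,\min(\lambda,1-\lambda)^2$, which also recovers the $(\min\{\lambda,1-\lambda\})^2 |\Omega|$-dependence stated in \cref{thm:TwoWell}.

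I do not expect any substantive obstacle: the entire argument is an elliptic estimate in disguise, where incompatibility plays the role of non-degeneracy of the symbol in the relevant direction, and this is promoted to a uniform lower bound by compactness of $\mathbb{S}^{d-1}$. The only minor bookkeeping concerns the extensions by zero of $\chi_A,\chi_B$ to $\R^d$ and the identification of $(1-\lambda)\chi_A - \lambda\chi_B$ with $(\chi_A - \lambda)\chi_\Omega$, both of which are routine.
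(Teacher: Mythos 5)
Your proof is correct and follows essentially the same route as the paper: reduce to the Fourier-side lower bound from \cref{cor:ElasticEnergyTwoWell}, exploit the incompatibility to get a uniform lower bound on the symbol applied to $A-B$, and then use Plancherel together with $\chi_A\in\{0,1\}$ to conclude. The one place where you genuinely diverge is the uniform lower bound $\inf_{\xi\in\mathbb S^{d-1}}|\AA(\xi)(A-B)|>0$: the paper obtains it by factoring through the singular values of $\AA(\xi)$ and the continuity of the orthogonal projection onto $\ker\AA(\xi)^\perp$, which tacitly invokes the constant-rank hypothesis, while you observe that $\xi\mapsto|\AA(\xi)(A-B)|^2$ is a polynomial, hence continuous, and strictly positive on the compact sphere by incompatibility, so the infimum is already positive with no structural assumption on $\AA$ beyond homogeneity. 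This is a mild but genuine simplification, and your remark that the constant-rank assumption plays no role in the incompatible two-well bound is accurate (the lower-bound direction of the $\sim$ in \cref{lem:AFreeElast} only needs the upper bound on singular values, i.e.\ continuity and compactness). The rest — identifying $(1-\lambda)\chi_A-\lambda\chi_B$ with $(\chi_A-\lambda)\chi_\Omega$ and bounding $(\chi_A-\lambda)^2\geq\min\{\lambda,1-\lambda\}^2$ — matches the paper verbatim.
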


 \begin{proof}
By virtue of \cref{cor:ElasticEnergyTwoWell}, we have the lower bound
  \begin{align*}
   E_{el}(\chi;F_\lambda) \geq C \int_{\R^d} \left| ((1-\lambda)\hat{\chi}_A - \lambda \hat{\chi}_B) \AA(\frac{\xi}{|\xi|}) (A-B) \right|^2 d\xi,
  \end{align*}
where the constant only depends on the operator \(\A(D)\).

As $(A-B) \notin \ker \AA (\xi)$ for all $\xi \in \mathbb{R}^d$ and thus $|\mathbb{A}(\xi)(A-B)| > 0$ for any $\xi \in \mathbb{R}^d$, by continuity of $\xi \mapsto \mathbb{A}(\xi)$ and compactness of $\mathbb{S}^{d-1}$, this implies $|\mathbb{A}(\frac{\xi}{|\xi|})(A-B)| \geq C(A,B) > 0$.
  Hence,
  \begin{align*}
   E_{el}(u,\chi) & \geq C^2 \int_{\R^d} |(1-\lambda)\hat{\chi}_A - \lambda \hat{\chi}_B|^2 d\xi = C^2 \int_{\R^d} |(1-\lambda)\chi_A - \lambda \chi_B|^2 dx \\
   & \geq C^2 ( \int_{\Omega_A} (1-\lambda)^2 dx + \int_{\Omega_B} \lambda^2 dx) \geq C^2 \min\{\lambda,1-\lambda\}^2 |\Omega|,
  \end{align*}
where $\Omega_A:=\supp(\chi_A)\subset \overline{\Omega}$ and $\Omega_B:=\supp(\chi_B)\subset \overline{\Omega}$.
 \end{proof}
 
\begin{rmk}
We emphasize that this result can be viewed as an incompatible nucleation bound.
\end{rmk}

\subsection{The compatible two-well case and scaling}
\label{sec:compat}

We next turn to the setting of two compatible wells and restrict our attention to \emph{first order operators}. In this case, we claim the following $\epsilon^{\frac{2}{3}}$-lower scaling bound. This is in analogy to the situation for the gradient which had first been derived in the seminal works \cite{KM92,KM94}.

\begin{prop}
\label{prop:compatible_2well_A}
Let \(d,n \in \N\).
Let $\Omega \subset \R^d$ be a bounded Lipschitz domain and let \(\A(D)\) be a first order operator as in \eqref{eq:operator}.
Let $A,B \in \R^{n}$ be such that $B-A\in \Lambda_{\mathcal{A}}\setminus I_\A$ where \(\Lambda_\A\) and $I_\A$ are given in \eqref{eq:wave_cone} and \eqref{eq:I_A}, and define $F_{\lambda} := \lambda A + (1-\lambda)B$ for some $\lambda \in (0,1)$. Let $E_\epsilon:= E_{el} + \epsilon E_{surf}$ be given in \eqref{eq:E-Total} with \(\mathcal{D}_{F_\lambda}\) defined in \eqref{eq:admissible}. Then, there exist $C = C(\A(D),A,B,\Omega, d, \lambda)>0$ and $\epsilon_0=\epsilon_0(\A(D),A,B,\Omega, d, \lambda)>0$ such that for any $\epsilon \in (0,\epsilon_0)$ we have
\begin{align*}
\epsilon^{2/3}\leq C \inf\limits_{\chi \in BV(\Omega;\{A,B\})} \inf\limits_{u \in \mathcal{D}_{F_{\lambda}}} E_\epsilon(u,\chi).
\end{align*}
\end{prop}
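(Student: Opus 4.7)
\emph{Strategy.} I plan to adapt the classical Kohn--Müller argument to the $\A$-free setting by combining the Fourier characterization of the elastic energy from \cref{cor:ElasticEnergyTwoWell} with a KM-type interpolation between the resulting Fourier-weighted $L^2$-bound and the $BV$ semi-norm of the phase indicator. Setting $\eta := \chi_A - \lambda \chi_\Omega$ (extended by zero to $\R^d$), \cref{cor:ElasticEnergyTwoWell} yields
\begin{align*}
 E_{el}(\chi;F_\lambda) \geq C\int_{\R^d}|\AA(\xi/|\xi|)(A-B)|^2 |\hat\eta(\xi)|^2 d\xi.
\end{align*}
Since $\A(D)$ is of order one, $\xi \mapsto \AA(\xi)(A-B)$ is linear; compatibility $B-A \in \Lambda_\A$ together with the constant-rank assumption implies that the $0$-homogeneous multiplier $\psi(\xi) := |\AA(\xi/|\xi|)(A-B)|^2$ vanishes precisely on a proper linear subspace $V := V_{\A,A-B} \subset \R^d$ and satisfies $\psi(\xi) \geq c\, \dist(\xi,V)^2/|\xi|^2$. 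Moreover, since $\chi_A \in \{0,1\}$ and $\lambda \in (0,1)$, the automatic pointwise bound $(\chi_A - \lambda)^2 \geq \min(\lambda,1-\lambda)^2$ on $\Omega$ yields
\begin{align*}
 \|\eta\|_{L^2(\R^d)}^2 \geq \min(\lambda, 1-\lambda)^2|\Omega| =: c_1 > 0.
\end{align*}

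\emph{Interpolation step.} The heart of the proof is a KM-type interpolation inequality of the form
\begin{align*}
 \|\eta\|_{L^2}^2 \leq C\, E_{el}^{1/3}\, E_{surf}^{2/3},
\end{align*}
valid for uniformly bounded $\eta \in BV(\R^d)$ with compact support. It is proved by a Fourier-space splitting: on the complement of a cone $C_\delta := \{\xi: \dist(\xi/|\xi|, V) < \delta\}$ around $V$, the lower bound on $\psi$ gives $\int_{C_\delta^c}|\hat\eta|^2 d\xi \leq C\delta^{-2}E_{el}$. Inside $C_\delta$, one splits at a frequency cutoff $R$: for $|\xi| \leq R$ one uses $|\hat\eta(\xi)| \leq \|\eta\|_{L^1} \leq C|\Omega|$, and for $|\xi|>R$ one exploits the $BV$ decay $|\xi\cdot v||\hat\eta(\xi)| \leq \|\p_v \eta\|_{L^1} \leq E_{surf}$ applied to a direction $v \in V$ (for which $|\xi\cdot v| \sim |\xi|$ inside $C_\delta$), obtaining $|\hat\eta|^2 \lesssim E_{surf}^2/|\xi|^2$. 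Optimizing the parameters $\delta$ and $R$ as functions of the ratio $E_{el}/E_{surf}$ then produces the claimed inequality.

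\emph{Conclusion and main obstacle.} Combining the interpolation with the automatic $L^2$ lower bound gives $E_{el}\cdot E_{surf}^2 \geq c_2 > 0$. Applying the weighted AM--GM inequality to the decomposition $E_{el} + \epsilon E_{surf} = E_{el} + \tfrac{\epsilon}{2}E_{surf} + \tfrac{\epsilon}{2}E_{surf}$ then yields
\begin{align*}
 E_{el} + \epsilon E_{surf} \geq 3\left(E_{el}\cdot \frac{\epsilon^2 E_{surf}^2}{4}\right)^{1/3} \geq C\epsilon^{2/3}
\end{align*}
for $\epsilon \in (0,\epsilon_0)$ small enough. The main technical obstacle is the interpolation step: in dimensions $d \geq 3$, the naive high-frequency integral $\int_{C_\delta \cap \{|\xi|>R\}}|\xi|^{-2}d\xi$ is not absolutely convergent, so one needs either to iterate the $BV$ decay along several independent directions in $V$ (using that compatibility plus constant rank guarantee $\dim V \geq 1$), or to reduce to a lower-dimensional analog by a slicing/Fubini argument along hyperplanes transverse to $V$. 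The first-order hypothesis is essential here: it ensures that $\psi$ is $0$-homogeneous with a \emph{linear} zero set, which is what makes the Fourier-space splitting quantitatively sharp and produces the classical $\epsilon^{2/3}$-scaling.
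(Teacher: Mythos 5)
Your overall strategy — a Fourier‐space splitting combined with the elastic multiplier lower bound from \cref{cor:ElasticEnergyTwoWell} and an interpolation against the surface energy — is in the right spirit, and the target inequality $\|\eta\|_{L^2}^2\lesssim E_{el}^{1/3}E_{surf}^{2/3}$ (with the perimeter correction) is indeed equivalent to the paper's bound. However, your proposed proof of the interpolation step contains a genuine gap, and it occurs already for $d=2$, not only for $d\geq 3$ as you state. Inside the cone $C_\delta$ at high frequency you want to use the pointwise estimate $|\hat\eta(\xi)|\lesssim E_{surf}/|\xi|$ and then integrate; but even under the most favorable assumption that $|\xi\cdot v|\sim|\xi|$ throughout $C_\delta$, the resulting integral $\int_{C_\delta\cap\{|\xi|>R\}}|\xi|^{-2}\,d\xi$ is logarithmically divergent in $d=2$ and polynomially divergent for $d\geq3$. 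So the pointwise approach fails in every dimension. Moreover, the assumption $|\xi\cdot v|\sim|\xi|$ for a single fixed $v\in V$ is false whenever $\dim V\geq 2$, since $\xi$ can lie close to $V$ yet be nearly orthogonal to $v$. Your suggested fixes (iterating BV decay along several directions in $V$, or slicing) are not carried out, and the first is problematic because $\eta$ is only BV, so higher‐order mixed partial bounds are not available.

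The paper's proof (via \cref{lem:aux}) is structurally different in exactly the place where your argument breaks down. It does not decompose into a cone around $V$ and its complement; instead it covers $\R^d$ by the ball complement $\{|\xi|\geq\mu\}$ and the slab $\{|\Pi_V\xi|\leq\mu\}$ around $V^\perp$. For the high‐frequency region $\{|\xi|\geq\mu\}$, the bound $\|\hat f\|_{L^2(\{|\xi|\geq\mu\})}^2\lesssim\mu^{-1}(E_{surf}+\Per(\Omega))$ is obtained not from the pointwise $L^\infty$ decay of $\hat f$ (which, as above, is too weak) but from an $\dot H^{1/2}$‐type interpolation between $L^\infty$ and BV as in \cite[Lemma 2]{Ruland22}. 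For the slab $\{|\Pi_V\xi|\leq\mu\}$, the elastic control is obtained by a Fubini argument in the $V^\perp$‐direction together with the $L^\infty$–$L^1$ bound on $\hat f(\cdot,\xi'')$, which is precisely the "slicing" idea you mention — but applied to the complementary, low‐frequency region. Finally, you should note that your interpolation inequality as stated cannot hold with $E_{surf}$ being $\int_\Omega|\nabla\chi|$ alone (take $\chi\equiv A$ for a counterexample); the boundary term $\Per(\Omega)$ must be carried along and absorbed at the end for small $\epsilon$, exactly as the paper does.
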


In order to deal with the compatible case, we invoke the following (slightly generalized) auxiliary results from \cite{Ruland22}, see also \cite{KW16, KKO13}, which we formulate for a general Fourier multiplier \(m\):

\begin{lem}[Elastic, surface and low frequency cut-off]
\label{lem:aux}
Let \(d,n,N\in \N\).
Let \(\Omega \subset \R^d\) be a bounded Lipschitz domain.
Let \(m: \R^d \to \R^{N}\) be a linear map and denote by \(V := \ker m\subsetneq \R^d\) its kernel and by \(\Pi_V: \R^d \to V\) the orthogonal projection onto \(V\). Let \(f \in BV(\R^d;\{-\lambda,0,1-\lambda\})\) for \(\lambda \in (0,1)\) with \(f = 0\) outside \(\Omega\) and \(f \in \{-\lambda,1-\lambda\}\) in \(\Omega\).
Consider the elastic and surface energies given by
\begin{align*}
 \tilde{E}_{el}(f) := \int_{\R^d} |m(\frac{\xi}{|\xi|}) \hat{f}(\xi)|^2 d\xi, \ \tilde{E}_{surf}(f) := \int_{\Omega} |\nabla f|.
\end{align*}
Then the following results hold:
\begin{itemize}
\item[(a)] \emph{Low frequency elastic energy control.} Let \(\mu > 1\), then there exists \(C=C(m,\Omega)>0\) with
\begin{align*}
\|\hat{f}\|_{L^2(\{\xi \in \R^d: \ |\Pi_{V}(\xi)|\leq \mu\})}^2 \leq C \mu^{2} \tilde{E}_{el}(f).
\end{align*}
\item[(b)] \emph{High frequency surface energy control.} There exists \(C=C(d,\lambda)>0\) such that for \(\mu > 0\) it holds
\begin{align*}
\|\hat{f}\|_{L^2(\{\xi \in \R^d: |\xi|\geq \mu\})}^2 \leq C \mu^{-1}( \tilde{E}_{surf}(f)  + \Per(\Omega)).
\end{align*}
\end{itemize}
\end{lem}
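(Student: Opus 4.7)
The plan is to prove the two estimates by separate Fourier-analytic arguments that exploit, respectively, the structural (constant-rank type) bound on the symbol $m$ and the BV/compact-support structure of $f$. I treat the surface bound (b) first because it is the cleaner of the two.

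\textbf{Proof plan for (b).} I would use a low-pass mollification and an $L^1$–$L^\infty$ interpolation. Pick a radial $\eta \in C_c^\infty(\R^d)$ with $\hat\eta(\xi)=1$ on $\{|\xi|\leq 1/2\}$, $\hat\eta(\xi)=0$ on $\{|\xi|\geq 1\}$, and set $\eta_\mu(x)=\mu^d\eta(\mu x)$ so that $\hat\eta_\mu(\xi)=\hat\eta(\xi/\mu)$. Plancherel gives
\begin{align*}
\|\hat f\|_{L^2(\{|\xi|\geq \mu\})}^2 \;\leq\; \|f-\eta_\mu * f\|_{L^2}^2 \;\leq\; \|f-\eta_\mu * f\|_{L^\infty}\,\|f-\eta_\mu * f\|_{L^1}.
\end{align*}
The $L^\infty$-norm is trivially bounded by $2\|f\|_{L^\infty}\leq 2$. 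For the $L^1$-norm I write $f - \eta_\mu * f = \int \eta_\mu(y)\bigl(f(\cdot)-f(\cdot-y)\bigr)\,dy$, and use the translation estimate $\|f(\cdot)-f(\cdot-y)\|_{L^1}\leq |y|\,\|\nabla f\|_{TV(\R^d)}$ valid for $f\in BV(\R^d)$. This yields $\|f-\eta_\mu * f\|_{L^1} \leq C\mu^{-1}\|\nabla f\|_{TV(\R^d)}$. Because $f$ vanishes outside $\bar\Omega$ and is bounded, $\|\nabla f\|_{TV(\R^d)} \leq \tilde E_{surf}(f) + C\|f\|_\infty\,\Per(\Omega)$, which gives (b).

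\textbf{Proof plan for (a).} The idea is that the symbol $|m(\xi/|\xi|)|$ is bounded below away from the wave cone $V$. Since $V=\ker m$, the restriction $m|_{V^\perp}$ is injective, and compactness of the unit sphere yields $c_m>0$ (depending only on $m$) such that $|m(\zeta)|\geq c_m|\Pi_{V^\perp}\zeta|$ for all $\zeta$. Hence $|m(\xi/|\xi|)|^2 \geq c_m^2 \,|\Pi_{V^\perp}\xi|^2/|\xi|^2 = c_m^2\bigl(1-|\Pi_V\xi|^2/|\xi|^2\bigr)$. I would split the region of integration as
\begin{align*}
\{|\Pi_V \xi|\leq \mu\} = R_1 \cup R_2,\quad R_1 := \{|\Pi_V \xi|\leq \mu,\ |\xi|\geq \sqrt{2}\mu\},\quad R_2 := \{|\xi|\leq \sqrt{2}\mu\}.
\end{align*}
On $R_1$ one has $|\Pi_V\xi|^2/|\xi|^2 \leq 1/2$ and thus $|m(\xi/|\xi|)|^2\geq c_m^2/2$, so
\begin{align*}
\int_{R_1}|\hat f|^2\,d\xi \;\leq\; \frac{2}{c_m^2}\,\tilde E_{el}(f) \;\leq\; \frac{2}{c_m^2}\mu^2\,\tilde E_{el}(f)
\end{align*}
using $\mu>1$. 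On $R_2$ I use $|\hat f(\xi)|\leq(2\pi)^{-d/2}\|f\|_{L^1}\leq C|\Omega|$ together with the volume bound $|R_2|\leq C\mu^d$, and absorb the resulting factor into the constant $C(m,\Omega)$. A finer step, refining the crude $L^\infty$ bound via partial Fourier in the $V^\perp$ direction and a Poincaré-type inequality on $V^\perp$-slices of $\Omega$ (using that $f$ has compact $V^\perp$-support and is BV), gives the cleaner $\mu^2$ factor directly.

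\textbf{Main obstacle.} The real difficulty is the low-frequency block $R_2$ in part (a): the naive volume estimate produces a factor $\mu^d$, whereas the statement asks for $\mu^2$. The sharp estimate relies on replacing the crude $L^\infty$ bound for $\hat f$ on $R_2$ by an estimate that exploits simultaneously the $V^\perp$-compact support of $f$ (Poincaré in the $V^\perp$-slice direction gives an $O(1)$ bound on $\hat f$ independent of dimension) and the bound $|\Pi_V\xi|\leq\mu$ on this region. Once this is done, the contribution on $R_2$ is absorbed into $C(m,\Omega)\,\mu^2\,\tilde E_{el}(f)$, completing the proof.
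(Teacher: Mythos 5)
Part~(b) of your proposal is fine and is essentially the argument the paper references (from \cite{Ruland22}): mollify at scale $\mu^{-1}$, interpolate $L^2$ between $L^\infty$ and $L^1$, and use the $BV$ translation estimate together with the jump of $f$ across $\p\Omega$.

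For part~(a), however, there is a genuine gap in the treatment of the low-frequency ball $R_2 = \{|\xi|\leq\sqrt{2}\mu\}$, and your proposed repair does not identify the actual mechanism. Your $R_1$-bound is correct (but note it wastes a factor $\mu^2$ by appealing to $\mu>1$). On $R_2$, the problem is not the size of the pointwise bound $|\hat f|\leq C|\Omega|$ --- that is already $O(1)$ --- but the measure $\sim\mu^d$ of $R_2$, which no slice-Poincar\'e estimate on $\hat f$ will reduce to $\mu^2$: one cannot bound $\int_{R_2}|\hat f|^2$ by $C\mu^2\tilde E_{el}(f)$ alone, because $\hat f$ may concentrate near the degenerate cone $\{\xi'=0\}$ where the multiplier vanishes. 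The device the paper uses instead is an \emph{absorption into the left-hand side}: writing $\xi=\xi'+\xi''$ and exploiting that for each fixed $\xi''$ the slice $\hat f(\cdot,\xi'')$ is the Fourier transform of a function supported in a fixed $V^\perp$-section of $\Omega$, so that $\sup_{\xi'}|\hat f(\xi',\xi'')|^2 \leq C(\Omega)\int_{V^\perp}|\hat f(\xi',\xi'')|^2\,d\xi'$ (an $L^\infty$--$L^2$ reverse bound via $\|\hat g\|_\infty\leq\|g\|_{L^1}\leq C\|g\|_{L^2}$). Choosing the slab width $1/a$ with $a=a(\Omega)$ accordingly, the ``bad'' strip $\{|\xi'|<1/a\}$ contributes at most half of the total slice $L^2$-mass and can be absorbed, leaving only the region $\{|\xi'|\geq 1/a,\ |\xi''|\leq\mu\}$, on which the symbol is bounded below by $c/(a^2\mu^2)$; this is where the correct $\mu^2$ factor arises. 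Your decomposition would work if you further split $R_2$ into $\{|\xi'|\geq 1/a\}$ (controlled by $\tilde E_{el}$ with the right $\mu^2$) and $\{|\xi'|<1/a\}$ (absorbed as above), but this extra step, and in particular the reverse $L^\infty$--$L^2$ slice bound that makes the absorption possible, is the crux of the proof and is missing from your write-up.
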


\begin{proof}[Proof of \cref{lem:aux}]
Since the property (b) is directly analogous to the one from \cite[Lemma 2]{Ruland22}, we only discuss the proof of (a) which requires some (slight) modifications with respect to \cite{Ruland22} (and \cite{KW16}). We thus present the argument for this for completeness.
We split \(\xi = \xi' + \xi'' \), where \( \xi' \in V^\perp \neq \{0\}, \xi'' \in V = \ker m\).
With this in hand and by the linearity of $m$ it holds
\begin{align*}
 m(\frac{\xi}{|\xi|}) = m(\frac{\xi'+\xi''}{|\xi|}) = m(\frac{\xi'}{|\xi|}) = M \frac{\xi'}{|\xi|}
\end{align*}
for some matrix representation \(M\) of \(m\).
Hence, using \(\xi' \perp \ker m\), there is \(c = c(m) >0\) such that
\begin{align*}
 \tilde{E}_{el}(f) = \int_{\R^d} |m(\frac{\xi}{|\xi|}) \hat{f}(\xi)|^2 d\xi \geq c \int_{\R^d} | \frac{\xi'}{|\xi|} \hat{f}(\xi)|^2 d\xi.
\end{align*}

With this in hand, we argue similarly as in \cite{KW16} and \cite{Ruland22}: For $a,b\in \R$, $a,b\geq 0$ and the orthogonal splitting \(\xi = \xi' + \xi''\) from above, it holds that
\begin{align}
\label{eq:lower_est}
\begin{split}
\frac{1}{c} \tilde{E}_{el}(f)
& \geq \int\limits_{\R^d}\left|\frac{\xi'}{|\xi|} \hat{f}\right|^2 d\xi = \int\limits_{\R^d} \frac{|\xi'|^2}{|\xi'|^2+|\xi''|^2} |\hat{f}|^2 d\xi \\
& \geq \frac{1}{\frac{a^2}{b^2} + 1} \int\limits_{\{ |\xi'|\geq \frac{1}{a}, \ |\xi^{''}|\leq \frac{1}{b} \}} |\hat{f}|^2 d\xi\\
& = \frac{1}{\frac{a^2}{b^2} + 1} \int\limits_{ \{ |\xi''|\leq \frac{1}{b} \}} \left(\int\limits_{V^\perp}|\hat{f}|^2 d\xi'- \int\limits_{\{ |\xi'| < \frac{1}{a} \}} |\hat{f}|^2 d\xi' \right) d\xi''\\
& \geq \frac{1}{\frac{a^2}{b^2} + 1} \int\limits_{ \{ |\xi''|\leq \frac{1}{b} \}} \left(\, \int\limits_{V^\perp}|\hat{f}|^2 d\xi'- \left( \frac{2}{a} \right)^{\dim V} \sup\limits_{\xi' \in V^\perp} |\hat{f}|^2  \right) d\xi''.
\end{split}
\end{align}
Using the notation \(f(\xi) = f(\xi',\xi'')\), setting
\begin{align*}
a^{\dim V}:= 2^{\dim V+1} \sup\limits_{\xi'' \in V} \frac{\sup\limits_{\xi' \in V^\perp} |\hat{f}(\xi', \xi'')|^2 }{\int\limits_{V^\perp} |\hat{f}(\xi', \xi'')|^2d\xi'},
\end{align*}
and using Plancherel's identity, the $L^{\infty}-L^1$ bounds for the Fourier transform and Hölder's inequality, we obtain that
\begin{align*}
a^{\dim V} \leq  2^{\dim V+1}\sup\limits_{\xi'' \in V} \frac{\|\F_{\xi''}f(\cdot, \xi'')\|_{L^1(V^\perp)}^2 }{\|\F_{\xi''}{f}(\cdot, \xi'')\|_{L^2(V^\perp)}^2} \leq C(\Omega) 2^{\dim V+1}.
\end{align*}

In particular, the constant $a$ is well-defined. Returning to \eqref{eq:lower_est}, we consequently deduce that for \(b \in (0, 1)\)
\begin{align*}
\tilde{E}_{el}(f) \geq C(\Omega,m) b^2 \int\limits_{ \{\xi : |\xi''|\leq \frac{1}{b} \}} |\hat{f}|^2 d\xi.
\end{align*}
Choosing $b= \mu^{-1} < 1$ and noting that $\{\xi \in \R^d: |\Pi_{V}(\xi)|\leq \mu\} = \{\xi \in \R^d: \ |\xi''| \leq \frac{1}{b}\}$ implies the claim.
\end{proof}

With \cref{lem:aux} in hand, we turn to the proof of the lower bound in  \cref{prop:compatible_2well_A}:

\begin{proof}[Proof of the lower bound in \cref{prop:compatible_2well_A}]
Since $B-A \in \Lambda_{\mathcal{A}}\setminus I_{\mathcal{A}}$, there exists $\xi \in \R^d\setminus \{0\}$ such that $\AA(\xi)(B-A)=0$. As $\mathcal{A}(D)$ is a first order operator, we have that $\AA(\xi)$ is linear in $\xi \in \R^d$. In particular, we have that the set, cf. \eqref{eq:laminatDir},
\begin{align*}
V_{B-A}:= V_{\A,B-A} = \{\xi \in \R^d: \ \AA(\xi)(B-A) = 0\} \neq \{0\}
\end{align*}
is a linear space.
Rewriting $\R^d \ni \xi = \xi' + \xi'' $ with $\xi' \in V_{B-A}^\perp$ and $\xi'' \in V_{B-A}$ as in the proof of \cref{lem:aux}, then \cref{cor:ElasticEnergyTwoWell} implies that
\begin{align} \label{eq:BoundEnergyMultiplier}
\begin{split}
 E_{el}(\chi; F_{\lambda}) & \geq C \int\limits_{\R^d}|((1-\lambda ) \hat{\chi}_{A} - \lambda \hat{\chi}_B)\AA(\frac{\xi}{|\xi|})(A-B)|^2 d\xi, \\
 E_{surf}(\chi) & = \int_{\Omega} |\nabla(\chi-F_\lambda)| = \int_\Omega |\nabla ((1-\lambda)\chi_A - \lambda \chi_B)(A-B)| dx \geq C \int\limits_{\Omega} |\nabla [(1-\lambda)\chi_A - \lambda \chi_B]|,
\end{split}
\end{align}
for a constant \(C\) depending on the operator \(\A(D)\) and on $A-B$.

Now, setting $m(\xi):= \AA(\xi)(A-B)$ and $f:=(1-\lambda ) \chi_{A} - \lambda \chi_B$, yields the applicability of \cref{lem:aux} with $V = V_{B-A}\subsetneq\R^d$. This is the only place where we use the assumption that $A-B\notin I_\A$. We deduce that by \eqref{eq:BoundEnergyMultiplier} and the decomposition of \(\R^d\) into the two regions from \cref{lem:aux} we have for \( \mu > 1\)
\begin{align*}
\|f\|_{L^2}^2 &\leq  \|\chi_{\{|\xi|\geq \mu\}}(D) f \|_{L^2}^2
+ \|\chi_{\{|\xi''|\leq \mu\}}(D)f\|_{L^2}^2\\
& \leq C \Big(\mu^{2}E_{el}(u,\chi) + (\mu^{-1} \epsilon^{-1}) \epsilon E_{surf}(\chi) + \mu^{-1} \Per(\Omega)\big) \Big),
\end{align*}
where the constant \(C\) depends on \(\A(D),A,B,\Omega, d, \lambda\).
Now choosing $\mu = \epsilon^{-\frac{1}{3}}>1$, noting that then $\mu^{-1}\epsilon^{-1} = \epsilon^{-\frac{2}{3}} $ for \(\epsilon < 1\), we obtain
\begin{align*}
\|f\|_{L^2}^2 \leq C \big(\epsilon^{-\frac{2}{3}} E_\epsilon(\chi;F_\lambda) + \epsilon^{\frac{1}{3}}\Per(\Omega)\big),
\end{align*}
where $E_\epsilon(\chi;F_\lambda):= E_{el}(\chi;F_{\lambda}) + \epsilon E_{surf}(\chi)$.
Using the lower bound
\begin{align*}
 \Vert f \Vert_{L^2}^2 = \int_{\R^d} |(1-\lambda)\hat{\chi}_A + \lambda \hat{\chi}_B|^2 d\xi = \int_{\R^d} |(1-\lambda) \chi_A + \lambda \chi_B|^2 dx \geq (\min\{\lambda,1-\lambda\})^2 |\Omega|,
\end{align*}
then implies that
\begin{align*}
\epsilon^{\frac{2}{3}}\leq C (E_{\epsilon}(\chi;F_{\lambda})+\epsilon \Per(\Omega)).
\end{align*}
Finally, for $\epsilon \in (0,\epsilon_0)$ and $\epsilon_0 = \epsilon_0(\A(D),A,B,\Omega, d, \lambda)>0$ sufficiently small, the perimeter contribution on the right hand side can be absorbed into the left hand side, which yields the desired result.
\end{proof}

\subsection{The super-compatible setting: Proof of \cref{thm:TwoWell}(iii) and reduction to cocanceling operators}
\label{sec:super}
In this subsection we will show that if we are in the setting in which the estimates of the previous subsection degenerate, i.e., $\mathbb A(\xi)(A-B)=0$ for all $\xi\in\R^d$, then in fact there can be no non-trivial bound from below. 
We will however also show that, in general for pairwise not super-compatible wells, it is possible to reduce to an equivalent minimization problem in the setting of cocanceling operators for suitably modified boundary data.

\begin{proof}[Proof of the super-compatible case in \cref{thm:TwoWell}]
It suffices to give an upper bound construction with zero total energy. To this end, we consider  $\chi=A\chi_\Omega $ and $u=A \chi_{\Omega} + F_{\lambda} \chi_{\R^d \setminus \overline{\Omega}}$ and observe that
\begin{align*}
\mathcal{A}(D)u = \mathcal{A}(D)(u-B) = \mathcal{A}(D)[(\chi_{\Omega} + \lambda \chi_{\R^d \setminus \overline{\Omega}})(A-B)]=0.
\end{align*} 
As a consequence, $u$ is admissible in the definition of the elastic energy and the elastic energy vanishes. Moreover, since $\chi = A$ in $\Omega$ we also have the vanishing of the surface energy. This concludes the argument.
\end{proof}

We will show that for two not super-compatible wells, we can always assume that $I_\A=\{0\}$, in which case we work in the class of \emph{cocanceling} operators introduced by Van Schaftingen in \cite{VS13}.

\begin{prop}
\label{prop:reduction_cocancelling}
Let $n,d,k \in \N$, let $\Omega \subset \R^d$ be a bounded Lipschitz domain, $\A(D)$ a differential operator of order $k$ as in \eqref{eq:operator} with $I_\A$ given in \eqref{eq:I_A}. For $\chi \in L^2(\Omega; \mathcal{K})$ for some compact set of states $\mathcal{K} \subset \R^n$ let $E_{el}(\chi; F)$ be as in \eqref{eq:E_el} for $F \in \mathcal{K}^{qc}$, cf. \cref{sec:not}.  Then for the restricted operator $\tilde{\A}(D): C^\infty(\R^d;I_{\A}^{\perp}) \to C^\infty(\R^d;\R^m)$ there exists a function $\chi_{\perp} \in L^2(\Omega, \Pi_{I_{\A}^{\perp}}\mathcal K)$ such that
\begin{align*}
 E_{el}(\chi;F) = E_{el}^{\tilde{\A}}(\chi_{\perp};F_\perp) = \inf_{u_{\perp} \in \mathcal{D}^{\tilde{A}}_{F_\perp}} \int_{\Omega} |u_\perp - \chi_\perp|^2 dx.
 \end{align*}
 Here we denote the orthogonal projection of $F$ onto $I_{\A}^\perp$ by $F_\perp$.
\end{prop}

\begin{proof}
 
    We use the orthogonal decomposition $\R^n = I_{\A}^\perp \oplus I_{\A}$ to write
    \begin{align*}
     u = u_{\perp} + u_{I}, \quad \chi = \chi_{\perp} + \chi_{I},
    \end{align*}
with $u_{\perp} : \R^d \to I_{\A}^\perp, u_{I}: \R^d \to I_{\A}$, $\chi_{\perp} : \Omega \to I_{\A}^\perp$, $\chi_{I}: \Omega \to I_{\A}$.
    By orthogonality we can also split the elastic energy
    \begin{align*}
     E_{el}(u,\chi) = \int_{\Omega} |u-\chi|^2 dx = \int_{\Omega} |u_{\perp} - \chi_{\perp} |^2 dx + \int_{\Omega} |u_I - \chi_I|^2 dx.
    \end{align*}

    Defining the restricted operator $\tilde{\A}(D): C^\infty(\R^d;I_{\A}^\perp) \to C^\infty(\R^d;\R^m), \tilde{\A}(D)u := \A(D)u$ and the restricted space of admissible functions as in \eqref{eq:admissible}
    \begin{align*}
     \mathcal{D}_{F_\perp}^{\tilde{A}} := \{u_{\perp} \in L^2_{loc}(\R^d;I_{\A}^\perp) : \tilde{\mathcal{A}}(D)u_{\perp} = 0 \text{ in } \R^d, u_{\perp} = F_{\perp} \text{ in } \R^d \setminus \bar\Omega\},
    \end{align*}
    we see that for $u \in \mathcal{D}_F$ it holds $u_\perp \in \mathcal{D}^{\tilde{\A}}_{F_\perp}$ with $F_\perp = \Pi_{I_\A^{\perp}} F$ and $u_{I} = F - F_{\perp}$ outside $\Omega$.

    Thus, after minimizing the elastic energy in $u$, it holds
    \begin{align*}
     \inf_{u \in \mathcal{D}_F} E_{el}(u,\chi) = \inf_{u_{\perp} \in \mathcal{D}^{\tilde{A}}_{F_\perp}} \inf_{u_{I} \in L^2_{loc}(\R^d;I_{\mathcal{A}}), u_I = F-F_\perp \text{ in } \bar\Omega^c} \int_{\Omega} |u_{\perp} - \chi_{\perp} |^2 dx + \int_{\Omega} |u_I - \chi_I|^2 dx.
    \end{align*}

    As we have seen in the proof of \cref{thm:TwoWell}(iii), the second term involving $u_I$ vanishes and hence,
    \begin{align*}
     E_{el}(\chi;F) = E_{el}^{\tilde{\A}}(\chi_\perp;F_{\perp}).
    \end{align*}

    This reduces the elastic energy to the case of a cocanceling operator as indeed $I_{\tilde{A}} = \{0\}$.
    \end{proof}
As the surface energy does not depend on the operator $\A(D)$, this result yields:
\begin{align*}
 E_{\epsilon}(\chi; F) = E_{el}^{\tilde{A}}(\chi_{\perp};F_{\perp}) + \epsilon E_{surf}(\chi).
\end{align*}

As a corollary, we apply this to the $N$-well problem:

\begin{cor}[Finitely many, pairwise not super-compatible wells]
\label{cor:two-well_reduction}
Under the same assumptions as in \cref{prop:reduction_cocancelling}, with $\chi \in BV(\Omega;\mathcal{K})$, for the special case that $ \mathcal{K}:=\{B_1,\dots,B_N \}\subset \R^n$ for $N \in \N$ such that $B_j$, $j\in \{1,\dots,N\}$, are pairwise not super-compatible, i.e. $B_i - B_j \notin I_\A$ for $i \neq j$, there exists a constant $C=C(B_1,\dots,B_n)>1$ such that
\begin{align*}
C^{-1} E_{surf}(\chi_{\perp}) \leq E_{surf}(\chi) \leq C E_{surf}(\chi_{\perp}).
\end{align*}
In particular, it hence holds that
\begin{align*}
 E_{\epsilon}(\chi;F) \sim E_{\epsilon}^{\tilde{\A}}(\chi_\perp;F_\perp).
\end{align*}
\end{cor}

\begin{proof}
Writing $\chi= \sum_{j=1}^N B_j \chi_{\Omega_j}$ with $\chi_{\Omega_j} \in BV(\Omega;\{0,1\}), \sum_{j=1}^N \chi_{\Omega_j} = 1$ in $\Omega$, we can calculate
\begin{align*}
 | \nabla \chi| = \sum_{i<j} |B_i - B_j| \mathcal{H}^{d-1}(\partial^\ast \Omega_i \cap \partial^\ast \Omega_j), \  | \nabla \chi_\perp| = \sum_{i<j} |(B_i - B_j)_\perp| \mathcal{H}^{d-1}(\partial^\ast \Omega_i \cap \partial^\ast \Omega_j),
\end{align*}
where we denote the reduced boundary of a set $E$ with finite perimeter by $\partial^\ast E$ and used the notation from above for $B \in \R^n$ to write $B_\perp = \Pi_{I_\A^\perp} B$.

By assumption, for $i < j$ it holds $B_i - B_j \notin I_\A$, and therefore also the projection satisfies $(B_i - B_j)_\perp \neq 0$.
This implies $|(B_i - B_j)_\perp| > 0$ for all tupels $(i,j)$ such that $i<j$ and hence there are constants $0 < c < \frac{|B_i-B_j|}{|(B_i-B_j)_\perp|} < C$ such that
\begin{align*}
 0 < c |\nabla \chi_\perp| \leq |\nabla \chi| \leq C |\nabla \chi_\perp|.
\end{align*}
This together with \cref{prop:reduction_cocancelling} concludes the proof.
\end{proof}

We emphasize that \cref{cor:two-well_reduction} in particular holds in the context of \cref{thm:TwoWell}. Therefore in the statement of \cref{thm:TwoWell}(ii) we can assume without loss of generality that $I_\A = \{0\}$. In fact, note that in the crucial bound of \cref{cor:ElasticEnergyTwoWell} we have $$\AA(\xi)(A-B)=\AA(\xi)(A_\perp-B_\perp)=\tilde\AA(\xi)(A_\perp-B_\perp)$$
for $|\xi|=1$.

\subsection{Some remarks on the compatible two-state problem for higher order operators}
\label{sec:higher}
We conclude our discussion of lower scaling bounds by commenting on the case of the compatible two-well problem for higher order operators. Here the situation is still less transparent, yet some remarks are possible.

Indeed, on the one hand, it is known that, in general, for operators of order $k\geq 2$ the two-well problem does \emph{not} have to scale with $\epsilon^{\frac{2}{3}}$. 
In order to illustrate this, we consider the specific operator $\A{(D)}:= \operatorname{curl} \operatorname{curl}$. This operator is the annihilator of the symmetrized gradient $e(v):= \frac{1}{2}(\nabla v + (\nabla v)^t)$.
We consider the following quantitative two-state problem (for \(d=2\))
\begin{align}
\label{eq:sym}
\mathcal{E}_{\epsilon}(v,\chi):= \mathcal{E}_{el}(v,\chi) + \epsilon \mathcal{E}_{surf}(\chi):=\int\limits_{[0,1]^2} \left|e(v)-\begin{pmatrix} 1 & 0 \\ 0 & 1 + \alpha(1-2\chi) \end{pmatrix} \right|^2 dx + \epsilon \int\limits_{[0,1]^2}|\nabla \chi|
\end{align} 
with $e(v) \in \mathcal{D}^{\A}_F, \chi \in BV([0,1]^2;\{0,1\})$, $\alpha \in (0,1)$ and study the corresponding minimization problem with prescribed boundary data $F:= \begin{pmatrix} 1 & 0 \\ 0 & 1 \end{pmatrix}$.

\begin{prop}
\label{prop:symm_grad}
Let $\mathcal{E}_{\epsilon}(v,\chi)$ and $F$ be as in \eqref{eq:sym}. Then there exists $\epsilon_0= \epsilon_0(\alpha)>0$ such that for $\epsilon \in (0,\epsilon_0)$ it holds that
\begin{align*}
\inf\limits_{\chi \in BV([0,1]^2;\{0,1\}) }\inf\limits_{e(v)\in \mathcal{D}^{\A}_{F}}  \mathcal{E}_{\epsilon}(v,\chi) \sim \epsilon^{\frac{4}{5}}.
\end{align*}
\end{prop}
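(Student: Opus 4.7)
The plan is to extract the scaling by a Fourier-based argument as in \cref{prop:compatible_2well_A}, modified to account for the \emph{second-order} vanishing of the symbol of $\A(D) = \operatorname{curl}\operatorname{curl}$ along the relevant kernel direction. A direct computation gives $\AA(\xi)M = \xi_2^2 M_{11} - 2\xi_1\xi_2 M_{12} + \xi_1^2 M_{22}$ and $\AA(\xi)\AA(\xi)^\ast = |\xi|^4$, so that specialising to the well difference $M(\chi) - F = \alpha(1-2\chi)\,e_2\otimes e_2$ and the scalar auxiliary $\tilde g := (1-2\chi)\chi_\Omega$, \cref{lem:AFreeElast} yields
\begin{equation*}
 \mathcal{E}_{el}(\chi;F) \gtrsim \alpha^2 \int_{\R^2}\frac{\xi_1^4}{|\xi|^4}\,|\hat{\tilde g}(\xi)|^2\,d\xi.
\end{equation*}
The key observation is that the multiplier vanishes to order four along $\{\xi_1 = 0\}$, one degree higher than in any first-order setting.

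For the lower bound I would repeat the argument in \cref{lem:aux}(a) verbatim, with the single change that on the good region $\{|\xi_1|\geq 1/a,\,|\xi_2|\leq 1/b\}$ the pointwise estimate $(|\xi_1|/|\xi|)^2 \geq (1+a^2/b^2)^{-1}$ is replaced by its square $(|\xi_1|/|\xi|)^4 \geq (1+a^2/b^2)^{-2}$. Since the constant $a = O(1)$ that tames the complementary low-$|\xi_1|$ strip in the reference proof depends only on $L^1$-to-$L^2$ Fourier ratios of the partial Fourier transform of $\tilde g$, and not on the order of vanishing of the multiplier, it transfers unchanged; the outcome is the modified low-frequency estimate
\begin{equation*}
 \|\hat{\tilde g}\|_{L^2(\{|\xi_2|\leq\mu\})}^2 \leq C\,\mu^4\,\alpha^{-2}\,\mathcal{E}_{el}, \qquad \mu \geq 1,
\end{equation*}
with the crucial factor $\mu^4$ replacing the $\mu^2$ of the first-order case. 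Combining this with \cref{lem:aux}(b), the inclusion $\{|\xi_2|>\mu\}\subset\{|\xi|>\mu\}$, and the identity $\|\tilde g\|_{L^2}^2 = |\Omega| = 1$, one obtains
\begin{equation*}
 1 \lesssim (\mu^4\alpha^{-2} + \mu^{-1}\epsilon^{-1})\,\mathcal{E}_\epsilon + \mu^{-1},
\end{equation*}
and the choice $\mu = (\alpha^2/\epsilon)^{1/5}$ balances the two $\mathcal{E}_\epsilon$-coefficients while absorbing the $\mu^{-1}$ error for $\epsilon$ sufficiently small relative to $\alpha$, producing $\mathcal{E}_\epsilon \gtrsim \alpha^{2/5}\epsilon^{4/5}$, i.e.\ the desired lower bound.

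For the matching upper bound I would invoke the branching-type construction of \cite{CC15}: a bulk laminate of wavelength $\lambda_0 \sim \epsilon^{1/5}$ in the $x_2$-direction, with the boundary mismatch at $x_1 \in \{0,1\}$ absorbed by a self-similar refinement whose per-generation elastic cost reflects exactly the quartic Fourier multiplier above and whose per-generation surface cost scales like $\epsilon/\lambda_n$; the two combine to the desired $\epsilon^{4/5}$ after summation over generations. The main obstacle is this upper-bound construction with its delicate bookkeeping of elastic versus surface contributions across scales, which is the content of \cite{CC15}; the new contribution here is the short Fourier-based derivation of the lower bound sketched above, which makes transparent why the degree of vanishing of the symbol is responsible for the departure from the $\epsilon^{2/3}$ scaling.
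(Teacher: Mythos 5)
Your proposal is correct and follows essentially the same route as the paper: the lower bound is obtained exactly as in the paper by observing that the $\operatorname{curl}\operatorname{curl}$ multiplier $\AA(\xi)(B-A) \sim \xi_1^2$ degenerates quadratically along $\{\xi_1 = 0\}$, plugging this into the argument of \cref{lem:aux}(a) to obtain a modified low-frequency bound with $\mu^4$ in place of $\mu^2$, and then optimising $\mu \sim \epsilon^{-1/5}$; the upper bound is delegated to the branching construction of \cite[Lemma 2.1]{CC15}, as in the paper. The only (harmless) difference is that you carry the explicit $\alpha$-dependence through the optimisation, whereas the paper absorbs it into constants.
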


We remark that this observation is not new; indeed, a geometrically nonlinear version of this had earlier been derived in \cite[Theorem 1.2]{CC15}. As observed in \cite{CC15} the reason for the different scaling in \cref{prop:symm_grad} and \cite[Theorem 1.2]{CC15}, compared to the more standard $\epsilon^{\frac{2}{3}}$ behaviour from \cref{thm:TwoWell}, consists of the higher degeneracy of the multiplier associated with the energy which is manifested in the presence of only one possible normal in the (symmetrized) rank-one condition. For convenience of the reader and in order to illustrate the robustness of the above approach within geometrically linear theories, we present an alternative short proof (of the lower bound) of \cref{prop:symm_grad} based on our Fourier theoretic framework. We note that in the geometrically linear setting this provides an alternative to the approach from \cite{CC15} in which the lower bound for the energy is deduced by a local ``averaging'' argument, considering the energy on representative domain patches with the expected scaling behaviour.

\begin{proof}
\emph{Step 1: Lower bound.}
We note that the lower bound for this setting directly follows from our arguments above: Indeed, for $A -B = 2\alpha e_2 \otimes e_2$, we obtain that
\begin{align*}
\AA(\xi)(B-A) = 2\alpha \xi \times \big(\xi \times (e_2 \otimes e_2)\big)^t = 2\alpha \xi_1^2.
\end{align*}
With this in hand, an analogous argument as in \cref{lem:aux} and, in particular, in \eqref{eq:lower_est} implies that
\begin{align}
\label{eq:aux_a}
\|\hat{\chi}\|_{L^2(\{\xi \in \R^2: \ |k_2|\leq \mu\})}^2 \leq C \mu^{4} \mathcal{E}_{el}(\chi;F),
\end{align}
where we have used that in this situation the multiplier is given by $m(\xi) = \AA(\xi)(B-A) \sim \xi_1^2$. The different exponent of $\mu$ in \eqref{eq:aux_a} (compared to the one from \cref{lem:aux}(a)) is a consequence of the degeneracy of the symbol $m(\xi)$ and the higher order of the operator $\operatorname{curl} \operatorname{curl}$ (or put, more concretely, the quadratic dependence $\xi_1^2$). 
Hence, replacing the bound from \cref{lem:aux}(a) by the one from \eqref{eq:aux_a} and carrying out the splitting as in the proof of \cref{thm:TwoWell}, we obtain the following optimization problem: For $f:=1 + \alpha(1-2\chi)$
\begin{align*}
(1-\alpha)^2 \leq \|f\|_{L^2}^2 &\leq  \|\chi_{\{|\xi|\geq \mu\}}(D) f \|_{L^2}^2
+ \|\chi_{\{|\xi''|\leq \mu\}}(D)f\|_{L^2}^2\\
& \leq C \big(\mu^{4}\mathcal{E}_{el}(u,\chi) + (\mu^{-1} \epsilon^{-1}) \epsilon \mathcal{E}_{surf}(\chi) + \mu^{-1} \Per(\Omega)\big).
\end{align*}
Choosing $\mu \sim \epsilon^{-\frac{1}{5}}$ and rearranging the estimates then imply the claim.\\

\emph{Step 2: Upper bound.}
The associated improved upper bound makes use of the vectorial structure of the problem in contrast to the essentially scalar ``standard $\epsilon^{2/3}$ construction'' (see the arguments below). Recalling that the nonlinear construction from the proof of \cite[Lemma 2.1]{CC15} also yields a construction with the desired scaling for the geometrically linearized problem, we do not carry out the details of this but refer to \cite[Lemma 2.1]{CC15} for these.
\end{proof}

On the other hand, the arguments from \cite{D13,CC15,CO09,CO12} show that still for $\A(D) = \operatorname{curl} \operatorname{curl}$ if $A -B = \gamma( e_1 \otimes e_2 + e_2 \otimes e_1)$ for $\gamma \in \R\setminus \{0\}$, then one recovers the $\epsilon^{2/3}$ scaling for the symmetrized gradient differential inclusion. In this case, the symbol reads $m(\xi) = \AA(\xi)(B-A) \sim \xi_1 \xi_2$ and the operator is ``less degenerate''.

We expect that the scaling behaviour of general higher order operators is in many interesting settings directly linked to the degeneracy of the symbol $\AA(\xi)(B-A)$. We plan to explore this in future work.

\section{Quantitative Rigidity of the $T_3$ Structure from \eqref{eq:problem}, \eqref{eq:problem1} for $\mathcal{A}(D)=\di$} \label{sec:T3}
In this section, we consider the \(T_3\) structure for the divergence operator introduced in \eqref{eq:problem}, \eqref{eq:problem1}.
The upper bound construction is given by an approximate solution of the type described in the introduction.
The lower bound is motivated by the rigidity of exact solutions as outlined in \cref{sec:stress_free}.

\subsection{The upper bound construction -- an infinite order laminate}
\label{sec:upper}

To begin with, we construct an infinite order laminate similar to the one for the Tartar square (cf. \cite{W97,C99,Ruland22} for quantitative versions of this). This is based on \cite{Garroni04} and will yield the upper bound estimate from \cref{thm:scaling_T3}.
We recall that for the divergence operator, instead of requiring rank-one connectedness for the existence of a laminate as for the curl, in our three-dimensional set-up we need rank-one or rank-two connectedness as can be seen from the wave cone for the divergence operator, cf. \eqref{eq:wave_cone}.
As a consequence, for two matrices \(A,B \in \R^{3 \times 3}\) such that \(\rank(B-A) \leq 2 \) there exists a piecewise constant map \(u: \R^3 \to \R^{3 \times 3}\) such that \( u \in \{A,B\}\) a.e. in \(\Omega\) and \(\di u = 0 \).
The lamination can be done in any direction of the kernel \(\ker(B-A) \neq \{0\}\).

Considering now the matrices $A_1,A_2,A_3$ given in \eqref{eq:problem1}, we observe that $\rank(A_i - A_j) = 3 $ for $i \neq j$.
Following \cite{Garroni04}, we introduce auxiliary matrices $S_1,S_2,S_3 \in \mathbb{R}^{3 \times 3}$.
\begin{align} \label{eq:AuxMat}
 S_1 = \begin{pmatrix}
        0 & 0 & 0 \\
        0 & \frac{2}{3} & 0 \\
        0 & 0 & 2
       \end{pmatrix}, \
S_2 = \begin{pmatrix}
       \frac{1}{2} & 0 & 0 \\
       0 & \frac{2}{3} & 0 \\
       0 & 0 & 1
      \end{pmatrix}, \
S_3 = \begin{pmatrix}
       0 & 0 & 0 \\
       0 & \frac{1}{3} & 0 \\
       0 & 0 & 1
      \end{pmatrix}.
\end{align}
It then holds for $i=1,2,3$ that
\begin{align*}
 \ker(S_i - A_i) = \vspan(e_i),\ S_{i} = \frac{1}{2} (A_{i+1} + S_{i+1}),
\end{align*}
where $A_4 = A_1, S_4 = S_1$.
As proved in \cite[Theorem 2]{PS09}, the $\mathcal{A}$-quasi-convex hull $\{A_1,A_2,A_3\}^{qc}$ can be explicitly characterized as the convex hull of the matrices $S_1,S_2,S_3$ together with the ``legs'' given by the line segments $A_j S_j$ for $j\in \{1,2,3\}$.

\begin{figure}
 \centering
 \begin{tikzpicture}[scale=0.5,x  = {(-45:3cm)}, y  = {(3cm,0cm)}, z  = {(0cm,3cm)}]
\coordinate[label=left:$A_1$] (A1) at (0,0,0);
\coordinate[label=left:$A_2$] (A2) at (-1/2,2/3,3);
\coordinate[label=below:$A_3$] (A3) at (1,1,1);
\coordinate[label=right:$S_1$] (S1) at (0,2/3,2);
\coordinate[label=below:$S_2$] (S2) at (1/2,2/3,1);
\coordinate[label=left:$S_3$] (S3) at (0,1/3,1);
\coordinate (M) at (-44/147,353/1176,1763/1176);

\draw[dashed] (A1) -- (S1);
\draw[dashed] (A2) -- (S2);
\draw[dashed] (A3) -- (S3);

\coordinate (M12) at (-29/588,-13/392,-1/1176); % -(M -> (A_1+A_2)/2), Voronoi for A_1 & A_2
\coordinate (M13) at (235/294,235/1176,-1175/1176); % M -> (A_1+A_3)/2, Voronoi for A_1 & A_3
\coordinate (M23) at (323/588,209/392,589/1176); % M -> (A_2+A_3)/2, Voronoi for A_2 & A_3
\draw (M) --++ (M12) --++ (M12) --++ (M12)--++ (M12) --++ (M12);
\draw (M) --++ (M13) --++ (235/1176,235/4704,-1175/4704);
\draw (M) --++ (M23) --++ (323/1176,209/783,589/2352);
 \end{tikzpicture}
\caption{The diagonal matrices $A_1,A_2,A_3,S_1,S_2,S_3$ with the dashed lines depicting the connections in the wave cone for $\mathcal{A}(D)=\di$ and the Voronoi-regions of $A_i$ shown by the lines. As shown in \cite{PS09} the set $\mathcal{K}^{qc}$ is given the inner triangle formed by $S_1 S_2 S_3$ and the ``legs'' connecting $S_j$ and $A_j$ for $j\in \{1,2,3\}$.}
\end{figure}
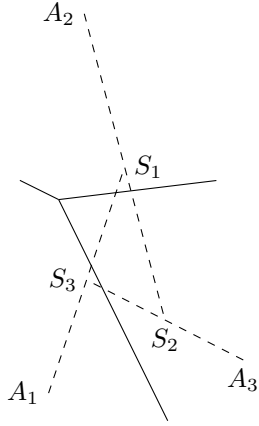

For simplicity and definiteness, we first assume, that $u = F = S_3$ outside $\Omega=[0,1]^3$.
In this setting we prove the following energy estimate:

\begin{prop}
\label{prop:energy_est_upper}
Let $\Omega = [0,1]^3$, $\mathcal{K} = \{A_1,A_2,A_3\}$ for the particular choice of matrices in \eqref{eq:problem1} and let $E_{\epsilon}$ be as in \eqref{eq:E-Total} and $\epsilon \in (0,1)$. Then for any $r\in (0,\frac{1}{4})$ with $r^{-1} \in 4\mathbb{N}$ there are sequences $u^{(m)}\in W^{ 1,\infty}(\mathbb{R}^3, \mathbb{R}^{3\times 3})$, such that $ \di u^{(m)} = 0$, $u^{(m)} = F := S_3$ outside $[0,1]^3$, and $\chi^{(m)} \in BV(\mathbb{R}^3, \mathcal{K})$, and a constant $C = C(F)>1$
with
 \begin{align*}
  E_{\epsilon}(u^{(m)},\chi^{(m)}) \leq C \left( 2^{-m} + \sum_{k=1}^m 2^{-k} r + r + \epsilon \frac{1}{r^m} \right).
 \end{align*}
\end{prop}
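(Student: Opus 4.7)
The plan is to build $u^{(m)}$ as a multi-scale laminate following the construction of \cite{Garroni04, Palombaro04}, adapted to the prescribed boundary datum $F = S_3$. Starting from $u^{(0)} \equiv S_3$ on $\Omega$, at each step $k = 1, \ldots, m$ I refine all regions where the current field equals $S_{\sigma(k-1)}$, with $\sigma \colon \N \to \{1,2,3\}$ cycling as $3,1,2,3,1,2,\ldots$, by replacing this value with a $\di$-free laminate between $A_{\sigma(k)}$ and $S_{\sigma(k)}$ of period $r_k$. This uses the identity $S_{\sigma(k-1)} = \tfrac{1}{2}(A_{\sigma(k)} + S_{\sigma(k)})$ and the kernel information $\ker(S_{\sigma(k)} - A_{\sigma(k)}) = \spa(e_{\sigma(k)})$, so consecutive lamination normals cycle through the three coordinate axes and successive stripes are mutually transverse -- which is what makes the construction viable in $\R^3$. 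The commensurability assumption $r_{k-2}/(2 r_{k+1}) \in \N$ ensures that the level-$(k+1)$ period divides the width of the (identically oriented) level-$(k-2)$ parent stripes, so sub-stripes fit cleanly into their ancestors.

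The core technical step is the construction of a divergence-free transition layer at the boundary of each $S_{\sigma(k-1)}$-region: a bare laminate of $A_{\sigma(k)}/S_{\sigma(k)}$ does not meet the neighbouring $A_{\sigma(k-1)}$-stripes in a $\di$-free way, since the pairwise differences lie outside $\Lambda_{\di}$. I would perform this transition at the potential level: every $\di$-free matrix field in $\R^3$ can be written row-wise as $u = \mathrm{curl}\, w$, and because the matrices $A_j, S_j$ are all diagonal the oscillating part of the laminate admits a matrix potential $w$ of $L^\infty$-norm $\leq C r_k$. Multiplying $w$ by a smooth one-dimensional cut-off of width $r_k$ along the parent-interface normal and taking the curl produces a $\di$-free field that interpolates between the laminate and the background $S_{\sigma(k-1)}$; the cut-off contribution $\nabla \eta \times w$ has size $O(r_k^{-1}) \cdot O(r_k) = O(1)$ and is supported in a layer whose volume is (surface area of the $S$-region) $\times\, r_k$.

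For the indicator I set $\chi^{(m)} = u^{(m)}$ wherever $u^{(m)} \in \{A_1,A_2,A_3\}$ and fix any $A_j$ on residual $S$-regions and on cut-off layers. The energy is then estimated in four pieces. \textbf{(i)} The unresolved $S_{\sigma(m)}$-region occupies volume $2^{-m}$, giving the elastic bulk term $C \cdot 2^{-m}$. \textbf{(ii)} At level $k \geq 2$ the parent $S_{\sigma(k-2)}$-region of volume $\sim 2^{-(k-2)}$ has been sliced by the level-$(k-1)$ laminate at scale $r_{k-1}$, so the $S_{\sigma(k-1)}/A_{\sigma(k-1)}$ interface where the cut-off is deposited has total $\mathcal{H}^2$-area bounded by $C \cdot 2^{-(k-2)}/r_{k-1}$; multiplying by the cut-off thickness $r_k$ and the $O(1)$ $L^\infty$-error yields the summand $C \cdot 2^{-k} r_k / r_{k-1}$. \textbf{(iii)} The level-$1$ cut-off adjacent to $\p\Omega$ contributes $C r_1$. \textbf{(iv)} Level-$k$ laminate interfaces have total $\mathcal{H}^2$-area $\lesssim 2^{-(k-1)}/r_k$, and the geometric decay $r_{k+1}/r_k < 1/2$ makes $\sum_{k=1}^m 2^{-(k-1)}/r_k \leq C/r_m$, yielding $\epsilon E_{surf} \leq C \epsilon / r_m$.

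The main obstacle is the simultaneous consistency of all cut-offs across the $m$ scales: the level-$k$ transition layer must sit inside the (already cut-off) parent $S_{\sigma(k-1)}$-region without spilling out, sub-stripes must not overlap with parent cut-offs, and the resulting $u^{(m)}$ must remain globally $\di$-free while being in $W^{1,\infty}$. The commensurability hypothesis $r_{k-2}/(2 r_{k+1}) \in \N$ together with the orthogonality of consecutive lamination normals is exactly what lets me place sub-stripes and their transition layers integrally inside their ancestors, and the potential-based cut-off preserves $\di u^{(m)} = 0$ by construction at every scale.
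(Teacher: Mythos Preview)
Your proposal is correct and follows essentially the same construction as the paper: both build the iterated laminate via curl potentials, insert a cut-off (the paper uses $\phi(d_{\partial R}/r_k)$ with the full distance to the cell boundary rather than a purely one-dimensional cut-off, but this affects only lower-order terms since the dominant cut-off volume comes from the faces at scale $r_{k-1}$), and arrive at the identical four-term energy breakdown $2^{-m} + \sum_k 2^{-k} r_k/r_{k-1} + r_1 + \epsilon/r_m$. Your accounting of the number of cells, cut-off volumes, and interface areas matches the paper's step-by-step computation.
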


In order to achieve this, in the next subsections, we iteratively construct a higher and higher order laminate (depending on $\epsilon>0$). As in the setting of the Tartar square, we keep track of the surface and elastic energy contributions which arise in this process.

\subsection{Proof of the upper bound from \cref{thm:scaling_T3}}
\label{sec:lower}

We split the proof of the upper bound from \cref{thm:scaling_T3} into several steps which we will carry out in the next sections and then combine in \cref{sec:T3UpperCombiningEstimates}.

First, we start by a simple lamination of $A_1$ and $S_1$ to obtain regions in which $u \in \mathcal{K}$ holds and to satisfy the exterior data condition $u = F = S_3 = \frac{1}{2}(A_1+S_1)$ outside $\Omega$.
This is followed by a similar construction replacing $S_1$ by a lamination of $A_2$ and $S_2$ achieving a second order laminate.
Iterating the procedure of replacing $S_j$ by a lamination of $A_{j+1}$ and $S_{j+1}$ (with the convention that $A_4 = A_1, S_4 = S_1$) yields \cref{prop:energy_est_upper}.
Finally, we optimize the parameter $r$ and the number of iterations depending on $\epsilon$ in order to show the desired upper bound estimate in \cref{prop:upper}.

As we will use a potential for the laminates, we will define a ``profile-function'' once in a more general form and will then refer to this in our construction for the higher order laminates.
\begin{lem}\label{lem:LaminateProfile}
Let $R = [a_1,b_1] \times [a_2,b_2] \times [a_3,b_3] \subset \mathbb{R}^3$ be an axis-parallel cuboid, then for any direction $e_j$ with $j \in \{1,2,3\}$ and any scale $r>0$ such that $\frac{b_j-a_j}{r} \in \mathbb{N}$ there is a continuous function $f_j(\,\cdot\,;R,r):R \to \mathbb{R}^{3\times3}$ satisfying the following properties:
\begin{itemize}
 \item The function $f_j(\,\cdot\,;R,r)$ only depends on the $j$-th coordinate $x_j$ and is $r$-periodic.
 \item It holds $f_j(x;R,r) = 0$ if $x \in R$ is such that $x = (x_1,x_2,x_3)$ lies in one of the planes characterized by $x_j \in a_j + r\mathbb{N}_0$.
 \item The matrix-valued function $\operatorname{curl} f_j(x;R,r)$ only attains the values $\pm \frac{S_j-A_j}{2}$, where $A_j,S_j$ is given as in \eqref{eq:problem1} and in \eqref{eq:AuxMat}, respectively. Furthermore, the volumes of the level sets are equal, i.e. $|\{x \in R:f_j(x;R,r) = S_j\}| = |\{x \in R: f_j(x;R,r)=A_j\}| = \frac{1}{2}|R|$.
\end{itemize}
\end{lem}
\begin{proof}
Without loss of generality by a translation, we may assume that $R = [0,b_1] \times [0,b_2] \times [0,b_3]$ for $b_1,b_2,b_3 >0$.
 We consider the continuous one-periodic extension of the function
 \begin{align*}
  h:[0,1) \to \mathbb{R}, \ h(t) := \begin{cases} \frac{1}{2}t & t \in [0,\frac{1}{2}), \\
                                    \frac{1}{2}(1-t) & t \in [\frac{1}{2},1).
                                   \end{cases}
 \end{align*}
Furthermore, we define the matrices
\begin{align*}
 M_1 := \begin{pmatrix} 0 & 0 & 0 \\ 0 & 0 & -\frac{2}{3} \\ 0 & 2 & 0 \end{pmatrix}, \ 
 M_2 := \begin{pmatrix} 0 & 0 & 1 \\ 0 & 0 & 0 \\ 2 & 0 & 0 \end{pmatrix}, \
 M_3 := \begin{pmatrix} 0 & 1 & 0 \\ -\frac{2}{3} & 0 & 0 \\ 0 & 0 & 0 \end{pmatrix},
\end{align*}
satisfying $e_j \times M_j = S_j - A_j$.
With this at hand we define $f_j(\,\cdot\,;R,r): R \to \mathbb{R}^{3\times3}$:
\begin{align*}
 f_j(x_1,x_2,x_3; R,r) := r h(\frac{x_j}{r}) M_j. 
\end{align*} 
It follows directly, that $f_j(\,\cdot\,;R,r)$ is continuous, only depends on $x_j$, is $r$-periodic, and vanishes for $x_j \in r\mathbb{N}_0$.
Lastly, we note that $\operatorname{curl} f_j(x;R,r) = h'(\frac{x_j}{r}) e_j \times M_j \in \{\pm \frac{S_j-A_j}{2}\}$ and that indeed $|\{x \in R:f_j(x;R,r) = S_j\}| = |\{x \in R: f_j(x;R,r)=A_j\}| = \frac{1}{2}|R|$.
\end{proof}

\subsubsection{First order laminates}

We use a potential $v: \mathbb{R}^3 \to \mathbb{R}^{3 \times 3}$ to construct our laminates attaining the prescribed exterior data, i.e. we consider the row-wise curl: $u = \operatorname{curl} v$.

As $S_3 = \frac{1}{2} A_1 + \frac{1}{2} S_1 $ and $(S_1 - A_1) e_1 = 0$ the first order lamination is in the $e_1$-direction.

We seek to use \cref{lem:LaminateProfile} to construct $v$, but have to adapt the boundary condition.
For this we define a mapping $\tilde{S}_3: \mathbb{R}^3 \to \mathbb{R}^{3 \times 3}$ with $\operatorname{curl} \tilde{S}_3 = S_3 = F$. A possible choice for $\tilde{S}_3$ is given by the following matrix-valued function
\begin{align*}
\tilde{S}_3(x) := \begin{pmatrix}
  0 & 0 & 0 \\
  0 & 0 & - \frac{1}{3} x_1 \\
  0 & x_1 & 0
 \end{pmatrix}.
\end{align*}

Furthermore, we also define the cut-off function
\begin{align*}
 \phi : \mathbb{R} \to [0,1], \ \phi(t) = \begin{cases}
         0 & t < \frac{1}{8}, \\
         4t - \frac{1}{2} & t \in [\frac{1}{8},\frac{3}{8}], \\
         1 & t > \frac{3}{8},
        \end{cases}
\end{align*}

With this, we define the (continuous) potential for $r \in (0,\frac{1}{4}), r^{-1} \in 4 \mathbb{N}$ by using \cref{lem:LaminateProfile} for $j=1, R=[0,1]^3$:
\begin{align*}
 v^{(1)} & :\Omega \to \mathbb{R}^{3\times 3}, \\
 v^{(1)}(x) & = \phi(\frac{1}{r} d_{\partial\Omega}(x)) f_1(x;\Omega,r) + \tilde{S}_3(x),
\end{align*}
where $d_{\partial \Omega}(x)$ denotes a smoothed-out distance function to the boundary $\partial \Omega$.
Without change of notation, we consider the (continuous) extension of $v^{(1)}$ to $\mathbb{R}^3$ by $\tilde{S}_3(x)$, which is possible, as $v^{(1)}(x) = \tilde{S}_3(x)$ on $\partial\Omega$.

We then set
\begin{align*}
 u^{(1)} := \operatorname{curl} v^{(1)},
\end{align*}
and note that in $\Omega$ it holds
\begin{align*}
 u^{(1)}(x) & = \operatorname{curl} v^{(1)}(x) \\
 & = \phi'(\frac{d_{\partial\Omega}(x)}{r}) h(\frac{x_1}{r}) \nabla d_{\partial\Omega}(x) \times M_1 + \phi(\frac{d_{\partial\Omega}(x)}{r})  \operatorname{curl} f_1(x;\Omega,r) + F \\
 & = \phi'(\frac{d_{\partial\Omega}(x)}{r}) h(\frac{x_1}{r}) \nabla d_{\partial\Omega}(x) \times M_1 + \phi(\frac{d_{\partial\Omega}(x)}{r}) h'(\frac{x_1}{r}) (S_1-A_1) + \frac{1}{2}(S_1+A_1).
\end{align*}

With these considerations, we have obtained the following properties:
It holds $\di u^{(1)} = \di (\operatorname{curl} v^{(1)}) = 0$ in $\mathbb{R}^3$,
$u^{(1)}(x) = F$ in $\mathbb{R}^3\setminus\overline{\Omega}$ and for $x \in \Omega \setminus d_{\partial\Omega}^{-1}([0,\frac{3}{8}r])$ we have $u^{(1)} \in \{A_1,S_1\}$. In other words, our deformation $u^{(1)}$ is a divergence-free function satisfying the desired boundary conditions and which, outside of the cut-off region, is a solution to the differential inclusion $u^{(1)}\in \{A_1,S_1\}$.

With the higher order laminates in mind we rephrase this using the decomposition of $\Omega$ into the three disjoint parts consisting of the $S_1$-cells, the $A_1$-cells and the cut-off region.
To be more precise, we define
\begin{align*}
 R^{(1)} & := \{x \in \Omega: \phi(\frac{d_{\partial\Omega}(x)}{r}) = 1, u^{(1)} = S_1\},\\
 Q^{(1)} & := \{x \in \Omega: \phi(\frac{d_{\partial\Omega}(x)}{r}) = 1, u^{(1)} = A_1\}, \\
 C^{(1)} & := \{x \in \Omega: \phi(\frac{d_{\partial\Omega}(x)}{r}) < 1\}.
\end{align*}
Indeed it holds $\Omega = R^{(1)} \cup Q^{(1)} \cup C^{(1)}$ and $|C^{(1)}| = 6 \frac{3}{8}r$ and by \cref{lem:LaminateProfile} we know $|R^{(1)}| \leq \frac{1}{2} |\Omega| = \frac{1}{2}$.

\begin{figure}
 \centering
 \begin{tikzpicture}[scale = 5]

  % A_1 blue, A_2 red, A_3 green

  % A_1 from cut-off
  \fill[draw=none,fill = blue!20] (0,0) -- (0.03125,0.03125) -- (0.03125,0.96875) -- (0,1) -- cycle;
  \fill[draw=none, fill = blue!20] (0,1) -- (1,1) -- (0.96875,0.96875) -- (0.03125,0.96875) -- cycle;
  \fill[draw=none, fill = blue!20] (1,1) -- (1,0) -- (0.96875,0.03125) -- (0.96875,0.96875) -- cycle;
  \fill[draw=none, fill = blue!20] (0,0) -- (1,0) -- (0.96875,0.03125) -- (0.03125,0.03125) -- cycle;
  % A_1 in \omega_2 \cup \omega_3
  \foreach \x in {0.125,0.375,0.625}
    \fill[draw=none, fill=blue!20] (\x,0.03125) -- (\x+0.125,0.03125) -- (\x+0.125,0.09375) -- (\x,0.09375) -- cycle;
  \foreach \x in {0.125,0.375,0.625}
    \fill[draw=none, fill=blue!20] (\x,0.96875) -- (\x+0.125,0.96875) -- (\x+0.125,0.90625) -- (\x,0.90625) -- cycle;
  \fill[draw=none, fill = blue!20] (0.875,0.03125) -- (0.96875,0.03125) -- (0.90625,0.09375) -- (0.875,0.09375) -- cycle;
  \fill[draw=none, fill = blue!20] (0.875,0.96875) -- (0.96875,0.96875) -- (0.90625,0.90625) -- (0.875,0.90625) -- cycle;
  \fill[draw=none,fill = blue!20] (0.90625,0.09375) -- (0.90625,0.90625) -- (0.96875,0.96875) -- (0.96875,0.03125) -- cycle;
  % A_1 interior
  \foreach \x in {0.125,0.375,0.625}
    \fill[draw=none, fill=blue!20] (\x,0.09375) -- (\x+0.125,0.09375) -- (\x+0.125,0.90625) -- (\x,0.90625) -- cycle;
  \fill[draw=none, fill = blue!20] (0.875,0.09375) -- (0.90625,0.09375) -- (0.90625,0.90625) -- (0.875,0.90625) -- cycle;

  % A_2 in cut-off
  \fill[draw=none, fill = red!20] (0.03975183824,0.03975183824) -- (0.03975183824,0.9602481618) -- (0.09375,0.90625) -- (0.09375,0.09375) -- cycle;
  % A_2 interior
  \foreach \x in {0.25,0.5,0.75}
    \fill[draw=none, fill = red!20] (\x,0.09375) -- (\x+0.125,0.09375) -- (\x+0.125,0.90625) -- (\x,0.90625) -- cycle;
  \fill[draw=none, fill = red!20] (0.09375,0.09375) -- (0.125,0.09375) -- (0.125,0.90625) -- (0.09375,0.90625) -- cycle;

  % A_3 cut-off
  \foreach \x in {0.25,0.5,0.75}
    \fill[draw=none, fill = green!20] (\x,0.03125) -- (\x+0.125,0.03125) -- (\x+0.125,0.09375) -- (\x,0.09375) -- cycle;
  \foreach \x in {0.25,0.5,0.75}
    \fill[draw=none, fill = green!20] (\x,0.96875) -- (\x+0.125,0.96875) -- (\x+0.125,0.90625) -- (\x,0.90625) -- cycle;
  \fill[draw=none, fill = green!20] (0.03125,0.03125) -- (0.09375,0.09375) -- (0.125,0.09375) -- (0.125,0.03125) -- cycle;
  \fill[draw=none, fill = green!20] (0.03125,0.96875) -- (0.09375,0.90625) -- (0.125,0.90625) -- (0.125,0.96875) -- cycle;
  \fill[draw=none, fill = green!20] (0.03125,0.03125) -- (0.03125,0.96875) -- (0.03975183824,0.9602481618) -- (0.03975183824,0.03975183824) -- cycle;

  % draw square
  \draw (0,0) -- (1,0) -- (1,1) -- (0,1) -- cycle;
  % r_1 = 1/4, i.e. (lines for {0,
  % draw the r_1/2 lines
  \foreach \x in {0.125,0.25,0.375,0.5,0.625,0.75,0.875}
    \draw[opacity=0.5,dashed] (\x,0) -- (\x,1);

  % draw the regions \omega_1,\omega_2
  \draw[gray] (0,0) -- (1,1);
  \draw[gray] (1,0) -- (0,1);

  % d_{\p\Omega}(x) = 1/8 r_1
  \draw (0.03125,0.03125) -- (0.03125,0.96875) -- (0.96875,0.96875) -- (0.96875,0.03125) -- cycle;
  % d_{\p\Omega}(x) = 3/8 r_1r_2
  \draw (0.09375,0.09375) -- (0.09375,0.90625) -- (0.90625,0.90625) -- (0.90625,0.09375) -- cycle;
  % x \in \omega_1, d_{\p\Omega}(x) = 173/1088 r_1
  \draw (0.03975183824,0.03975183824) -- (0.03975183824,0.9602481618);

 \end{tikzpicture}
\caption{The \(x_3 = \frac{1}{2}\) slice of the projection \(\chi^{(1)}\), blue represents \(A_1\), red \(A_2\) and green \(A_3\).}
\end{figure}
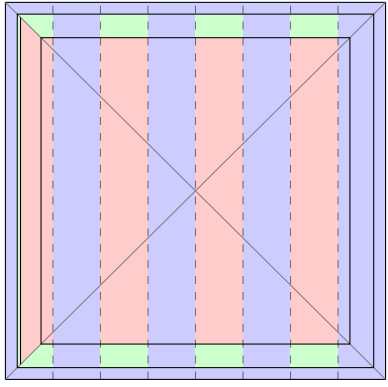

Choosing $\chi^{(1)} = \Pi_{\mathcal{K}} u^{(1)}$ as the pointwise orthogonal (with a fixed choice for the not uniquely defined points) projection of $u^{(1)}$ onto $\mathcal{K}$, up to a uniformly bounded constant, the elastic energy can be bounded by the measure of the region in which $u^{(1)}(x) = S_1$ and the cut-off region:
\begin{align*}
 E_{el}(u^{(1)},\chi^{(1)}) & = \int_\Omega |u^{(1)}-\chi^{(1)}|^2 \mathrm{d}x \leq C \Big( |R^{(1)}| + |C^{(1)}| \Big) \\
 & \leq C \Big( \frac{1}{2} |\Omega| + 6 \frac{3r}{8} \Big) \leq C \big( \frac{1}{2} + 3 r \big).
\end{align*}

Furthermore, the surface energy is bounded by counting the interfaces at which $\chi^{(1)}$ may jump. This consists of at most $\frac{2}{r}$ interfaces in the interior and at most $\frac{2}{r}+ 2 \cdot 6 \leq \frac{8}{r}$ new interfaces in the cut-off region. For $r \in (0,1)$, the surface area is thus controlled by 
\begin{align*}
 E_{surf}(\chi^{(1)}) = \int_\Omega |\nabla \chi^{(1)}| \leq C \frac{10}{r}.
\end{align*}

\subsubsection{Second order lamination}

After this first order lamination, the differential inclusion $u \in \mathcal{K}$ with $\di u = 0$ holds only in $Q^{(1)}$.
In order to further reduce the energy, we now replace each of the $\frac{1}{r}$ many cuboids in $R^{(1)}$ for which $u^{(1)} = S_1 = \frac{1}{2} A_2 + \frac{1}{2} S_2$ by a lamination in the $e_2$-direction.
For this, we modify the potential $v^{(1)}$ in these regions:

For $x \in R^{(1)}$ and for $r^2 \in (0,\frac{r}{2}), r^{-2}+\frac{3}{4r} \in \mathbb{N}$, we define with the help of \cref{lem:LaminateProfile}
\begin{align*}
 v^{(2)}(x) = \phi(\frac{d_{\partial R^{(1)}}(x)}{r^2}) f_2(x;R^{(1)},r^2) + f_1(x;\Omega,r) + \tilde{S}_3(x)
\end{align*}
 and $v^{(2)}(x) = v^{(1)}(x)$ else. 
Here, and in the following, we use the notation $f_j(x;R^{(1)},r)$ for $R^{(1)}$ which is not a cuboid but an union of disjoint cuboids and mean depending on $x$ the corresponding connected cuboid.
By this $v^{(2)}$ defines a continuous map, as inside $R^{(1)}$ the cut-off attains the constant value one and therefore $v^{(1)}(x) = f_1(x;\Omega,r) + \tilde{S}_3(x)$ for $x \in \partial R^{(1)}$.
 By construction, the map $u^{(2)} := \operatorname{curl} v^{(2)}$ is divergence free, i.e. the interfaces are compatible.

As in the construction for first order laminates we set $\chi^{(2)} = \Pi_\mathcal{K} u^{(2)}$, 
 and define the sets
 \begin{align*}
  R^{(2)} & := \{x \in R^{(1)}: \phi(\frac{d_{\partial R^{(1)}}(x)}{r^2}) = 1, u^{(2)} = S_2\}, \\
  Q^{(2)} & := \{x \in R^{(1)}: \phi(\frac{d_{\partial R^{(1)}}(x)}{r^2}) = 1, u^{(2)} = A_2\}, \\
  C^{(2)} & := \{x \in R^{(1)}: \phi(\frac{d_{\partial R^{(1)}}(x)}{r^2}) < 1\}.
 \end{align*}
 Then indeed again we have the decomposition $R^{(1)} = R^{(2)} \cup Q^{(2)} \cup C^{(2)}$ with $|R^{(2)}| \leq \frac{1}{2} |R^{(1)}|$ and $|C^{(2)}| \leq \frac{1}{r} \frac{3}{8} r^2 6$ as the volume of the cut-off region can be bounded by the number of cells times $\frac{3}{8}r^2$ times six times the area of the biggest face.

As the elastic energy vanishes in $Q^{(2)}$, we obtain
 \begin{align*}
  E_{el}(u^{(2)},\chi^{(2)}) & = \int_\Omega |u^{(2)} - \chi^{(2)}|^2 \mathrm{d}x \leq C \Big( |R^{(2)}| + |C^{(2)}| + |C^{(1)}|\Big)\\
  & \leq C \Big( \frac{1}{4} + 3 r + \frac{3}{2} r\Big).
  \end{align*}
 Indeed, this follows from the fact that we have improved our deformation in half the volume of the region in which $u^{(1)} \notin \mathcal{K}$ but have added a new cut-off region in each cuboid in which we do the second order lamination.
  For the surface energy it holds
  \begin{align*}
  E_{surf}(\chi^{(2)}) & \leq C(\frac{10}{r} + \frac{1}{r} \frac{10}{r^2} \frac{r}{2}) = C (\frac{10}{r} + \frac{5}{r^2}),
 \end{align*}
 as we add at most $\frac{4}{r^2}+12 \leq \frac{10}{r^2}$ many new faces in each one of the $\frac{1}{r}$ many cuboids and as each surface has a surface area of size at most $\frac{r}{2}$.

 \subsubsection{Iteration: (m+1)-th order}
Without loss of generality, we assume that the $(m+1)$-th order lamination will be in $e_1$-direction, i.e. $m=3j$ for some $j \in \mathbb{N}$.
Else, we only have to adapt the corresponding roles of the directions.

We define iteratively the $(m+1)$-th potential with the help of the sets $R^{(m)}$, for this we set (for given $v^{(m)}, u^{(m)} = \operatorname{curl} v^{(m)}$)
\begin{align*}
 R^{(m)} & := \{x \in R^{(m-1)} : \phi(\frac{d_{\partial R^{(m-1)}}(x)}{r^{m}})=1, u^{(m)} = S_{3}\}, \\
 Q^{(m)} & := \{x \in R^{(m-1)} : \phi(\frac{d_{\partial R^{(m-1)}}(x)}{r^m})=1, u^{(m)} = A_{3}\}, \\
 C^{(m)} & := \{x \in R^{(m-1)} : \phi(\frac{d_{\partial R^{(m-1)}}(x)}{r^m}) < 1\}.
\end{align*}
Inside $R^{(m)}$ we then define $v^{(m+1)}$ by
\begin{align*}
 v^{(m+1)}(x) &= \phi(\frac{d_{\partial R^{(m)}}(x)}{r^{m+1}}) f_1(x;R^{(m)},r^{m+1}) + \sum_{k=1}^m f_{[k]}(x;R^{(k-1)},r^k) + \tilde{S}_3(x), \\
 [k] & = \begin{cases} 1 & k \equiv 1 \operatorname{mod} 3, \\ 2 & k \equiv 2 \operatorname{mod} 3, \\ 3 & k \equiv 0 \operatorname{mod} 3.
         \end{cases}
\end{align*}
and $v^{(m+1)}(x) = v^{(m)}(x)$ else for $x \notin R^{(m)}$.
By induction we see that $v^{(m+1)}$ is continuous, that for $x \in R^{(m)}$ such that $\phi(\frac{d_{\partial R^{(m)}}(x)}{r^{m+1}})=1$ it holds that $u^{(m+1)} := \operatorname{curl} v^{(m+1)} \in \{A_1,S_1\}$ and that we have the decomposition $R^{(m-1)} = R^{(m)} \cup Q^{(m)} \cup C^{(m)}$ with $|R^{(m)}| \leq \frac{1}{2} |R^{(m-1)}|, |C^{(m)}| \leq C 2^{-m} \frac{r^{m}}{r^{m-1}} = C 2^{-m} r$ for a universal constant $C>0$ independent of $m,r$.
This follows from the fact, that the number of cuboids in $R^{(m)}$ is bounded by $C \frac{ 2^{-m}}{r^{m-1}r^{m-2}r^{m-3}}$ and that the biggest face of each cuboid has an area of at most $r^{m-2} r^{m-3}$.

\begin{figure}
 \centering
 \begin{tikzpicture}[thick, scale = 0.5]
 % r_{m-2} = 24, r_{m-1} = 10, r_{m} = 4.5, r_{m+1} = 2

 % Draw the interfaces
 \foreach \x in {1,2,...,10,11}
    \draw[draw = none,fill = gray!20] (\x,0,2.25) -- (\x,5,2.25) -- (\x,5,0) -- (\x,0,0) -- cycle;
 \foreach \x in {1,2,...,10,11} 
    \draw[gray] (\x,0,2.25) -- (\x,5,2.25) -- (\x,5,0);
 \foreach \x in {1,2,...,10,11}
    \draw[gray, dashed, thin] (\x,5,0) -- (\x,0,0) -- (\x,0,2.25);
 
 % Draw cut-off region (r_{m+1} = 2)
 \draw[draw=none, fill = blue, fill opacity = 0.05] (0,0,0.75) -- (12,0,0.75) -- (12,5,0.75) -- (0,5,0.75) -- cycle;
 \draw[draw=none, fill = blue, fill opacity = 0.1] (0,0,1.5) -- (12,0,1.5) -- (12,5,1.5) -- (0,5,1.5) -- cycle;
 
 \draw[draw=none, fill = red, fill opacity = 0.05] (0,0.75,0) -- (12,0.75,0) -- (12,0.75,2.25) -- (0,0.75,2.25) -- cycle;
 \draw[draw=none, fill = red, fill opacity = 0.1] (0,4.25,0) -- (12,4.25,0) -- (12,4.25,2.25) -- (0,4.25,2.25) -- cycle;
 
 \draw[draw=none, fill = green, fill opacity = 0.05] (0.75,0,0) -- (0.75,5,0) -- (0.75,5,2.25) -- (0.75,0,2.25) -- cycle;
 \draw[draw=none, fill = green, fill opacity = 0.1] (11.25,0,0) -- (11.25,5,0) -- (11.25,5,2.25) -- (11.25,0,2.25) -- cycle;

 % Draw the cuboid
 \draw[dashed] (0,5,0) -- (0,0,0) -- (12,0,0);
 \draw[dashed] (0,0,0) -- (0,0,2.25);
 \draw[very thick] (0,0,2.25) -- (12,0,2.25) -- (12,5,2.25) -- (0,5,2.25) -- (0,0,2.25);
 \draw[very thick] (12,0,0) -- (12,0,2.25) -- (12,5,2.25) -- (12,5,0);
 \draw[very thick] (0,5,2.25) -- (0,5,0) -- (12,5,0) -- (12,0,0);

 % Mark the length scales
 % r_{m+1}
 \draw (0,-0.4,2.25) -- (1,-0.4,2.25);
 \draw (0,-0.45,2.25) -- (0,-0.35,2.25);
 \draw (1,-0.45,2.25) -- (1,-0.35,2.25);
 \node[below] at (0.5,-0.4,2.25) {$\frac{r^{m+1}}{2}$};
 % r_{m-2}
 \draw (0,-0.2,2.25) -- (12,-0.2,2.25);
 \draw (0,-0.25,2.25) -- (0,-0.15,2.25);
 \draw (12,-0.25,2.25) -- (12,-0.15,2.25);
 \node[below] at (6,-0.2,2.25) {$\frac{r^{m-2}}{2}-\frac{3}{4}r^{m+1}-\frac{3}{4}r^{m}$};
 % r_{m-1}
 \draw (-0.2,0,2.25) -- (-0.2,5,2.25);
 \draw (-0.25,0,2.25) -- (-0.15,0,2.25);
 \draw (-0.25,5,2.25) -- (-0.15,5,2.25);
 \node[left] at (-0.2,2.5,2.25) {$\frac{r^{m-1}}{2}-\frac{3}{4}r^{m}$};
 % r_m
 \draw (0,5.2,2.25) -- (0,5.2,0);
 \draw (0,5.15,2.25) -- (0,5.25,2.25);
 \draw (0,5.15,0) -- (0,5.25,0);
 \node[above left] at (0,5.2,1.125) {$\frac{r^{m}}{2}$};
 
 \draw[->] (-2,0,2.25) -- (-1,0,2.25) node[below] {$e_1$};
 \draw[->] (-2,0,2.25) -- (-2,1,2.25) node[left] {$e_2$};
 \draw[->] (-2,0,2.25) -- (-2,0,1.25) node[above right] {$e_3$};
\end{tikzpicture}
\caption{One $S_3$-cell with the corresponding lamination in $e_1$-direction. In green/red/orange the inner boundary of the cut-off region is depicted.}
\end{figure}
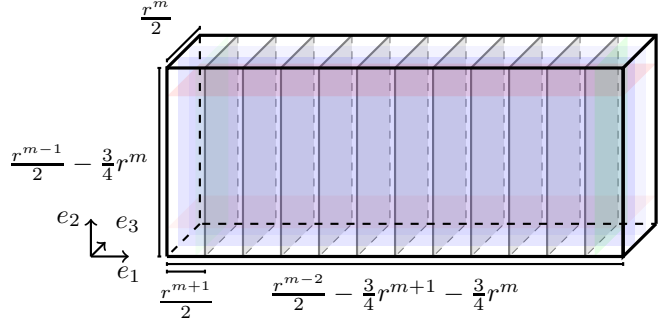

As the previous cut-offs still contribute to the total energy, we obtain the following elastic energy bound ($\chi^{(m+1)} := \Pi_\mathcal{K} u^{(m+1)}$)
\begin{align*}
 E_{el}(u^{(m+1)},\chi^{(m+1)}) & \leq C \big( |R^{(m+1)}| \cup \bigcup_{k=1}^{m+1} |C^{(k)}| \big) = C \left( 2^{-m} + r + \sum_{j=0}^{m+1} 2^{-j} r \right).
\end{align*}
This bound resembles the (iterative) decomposition of $\Omega = R^{(1)} \cup Q^{(1)} \cup C^{(1)} = R^{(m+1)} \cup \bigcup_{k=1}^{m+1} Q^{(k)} \cup \bigcup_{k=1}^{m+1} C^{(k)}$ and the fact that $u^{m+1} = \chi^{(m+1)}$ in $\bigcup_{k=1}^{m+1} Q^{(k)}$.

For the surface energy, we again calculate the new contribution of the next order lamination and then sum over all previous ones.
Each new face has a surface area of at most $\frac{r^m}{2} \frac{r^{m-1}}{2}$. In each $S_3$-cell, we add $\frac{r^{m-2}}{r^{m+1}}$ new faces in the lamination and at most $\frac{r^{m-2}}{r^{m+1}}+12$ ones for the cut-off.
Since we have at most $C \frac{2^{-m-1}}{r^m r^{m-1} r^{m-2}}$ new cells the surface energy is increased by
\begin{align*}
 C 2^{-m} \frac{8}{r^m r^{m-1} r^{m-2}} (\frac{r^{m-2}}{r^{m+1}} + (\text{surf. in cut-off })) \frac{r^m}{2} \frac{r^{m-1}}{2} \leq C 2^{-m} \frac{1}{r^{m+1}},
\end{align*}
yielding the following overall surface energy bound:
\begin{align*}
 E_{surf}(\chi^{(m+1)}) \leq C \left( \sum_{j=1}^{m+1} 2^{-j} r^{-j} \right) \leq \frac{C}{r^{m+1}}.
\end{align*}

For the total energy this implies
\begin{align*}
 E_{\epsilon}(u^{(m+1)},\chi^{(m+1)}) \leq C \left(2^{-m} + r + \sum_{j=1}^m 2^{-j} r + \epsilon \frac{1}{r^{m+1}}\right).
\end{align*}

With this we have shown the claimed upper bound and have thus concluded the proof of \cref{prop:energy_est_upper}.
\qed

\subsubsection{Combining the estimates: Proof of the upper bound in \cref{thm:scaling_T3}} \label{sec:T3UpperCombiningEstimates}

With the previous construction in hand, we conclude the upper estimate from \cref{thm:scaling_T3}:

\begin{prop}
\label{prop:upper}
Let $\Omega = [0,1]^3$, \(\K = \{A_1,A_2,A_3\}\) for \(A_1,A_2,A_3 \in \R^{3 \times 3}\) be given in \eqref{eq:problem1}, let $\epsilon \in (0,1)$, and let $E_{\epsilon}$ be as in \eqref{eq:E-Total}, $F\in \K^{qc} \setminus \K$ and \(\mathcal{D}_F\) given in \eqref{eq:admissible}.
 Then there are constants \(c>0\) and \(C>1\), only depending on the boundary data \(F\), such that
 \begin{align*}
  \inf_{u \in \mathcal{D}_F} \inf_{\chi \in BV(\Omega;\K)} E_{\epsilon}(u,\chi) \leq C \exp(-c |\log \epsilon|^{\frac{1}{2}}).
 \end{align*}
\end{prop}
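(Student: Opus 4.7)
The plan is to specialize the general construction of \cref{prop:energy_est_upper} to a dyadic geometric sequence of scales and to optimize both the common ratio and the depth $m$ of the laminate in terms of $\epsilon$, after first reducing an arbitrary datum $F \in \K^{qc}\setminus\K$ to the case $F = S_j$ already covered by that proposition.

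\textbf{Reduction to $F = S_j$.} By the characterization of $\K^{qc}$ recalled in \cref{sec:not} (the triangle $\conv\{S_1,S_2,S_3\}$ together with the three legs joining $A_j$ and $S_j$), the datum $F$ is realized as a divergence-free first- or second-order laminate of elements of $\{A_1,A_2,A_3,S_1,S_2,S_3\}$. If $F$ lies on a leg $A_j S_j$, a simple laminate in the direction $e_j \in \ker(A_j-S_j)$ produces the required decomposition; if $F$ lies in the interior of the triangle $\conv\{S_1,S_2,S_3\}$, a two-step laminate suffices because each difference $S_i - S_j$ has rank two and hence belongs to $\Lambda_{\di}$. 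Executed at a fixed scale independent of $\epsilon$ with a boundary cut-off analogous to the one of \cref{sec:upper}, this outer construction contributes only a constant (depending on $F$) to both $E_{el}$ and $E_{surf}$. Inside each of the resulting $S_j$-cells we apply the iterative construction of \cref{prop:energy_est_upper}; by the cyclic symmetry of the roles of the three indices, the estimate of that proposition holds verbatim for any $S_j$, not only for $S_3$.

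\textbf{Geometric scales and optimization.} For an integer $n \geq 2$, set $q := 2^{-n}$ and $r_k := q^k$ for $k \geq 1$, with the conventions $r_{-1} = r_0 = 1$. Then $r_{k+1}/r_k = q \in (0,1/2)$ and $r_{k-2}/(2 r_{k+1}) = 1/(2q^3) = 2^{3n-1} \in \N$, so this family is admissible in \cref{prop:energy_est_upper}. Since $\sum_{k=2}^m 2^{-k}\,r_k/r_{k-1} = q\sum_{k=2}^m 2^{-k} \leq q$, the bound of that proposition reduces to
\begin{equation*}
E_\epsilon(u^{(m)},\chi^{(m)}) \leq C\bigl(2^{-m} + q + \epsilon\,q^{-m}\bigr).
\end{equation*}
Coupling the common ratio to the depth by $n = m$, so that $q = 2^{-m}$ and $r_m = 2^{-m^2}$, merges the first two terms into
\begin{equation*}
E_\epsilon \leq C\bigl(2\cdot 2^{-m} + \epsilon\cdot 2^{m^2}\bigr),
\end{equation*}
and the third term is dominated by the first as soon as $\epsilon \leq 2^{-m(m+1)}$. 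For $\epsilon \in (0,\epsilon_0)$ with $\epsilon_0$ small enough, we select $m \in \N$ to be the largest integer satisfying $m(m+1) \leq |\log_2 \epsilon|$; then $m \geq 2$ and $m \geq c_0 |\log\epsilon|^{1/2}$ for some universal $c_0 > 0$, yielding
\begin{equation*}
E_\epsilon \leq 3C\cdot 2^{-m} \leq C'\exp\bigl(-c\,|\log\epsilon|^{1/2}\bigr).
\end{equation*}
For $\epsilon \geq \epsilon_0$ the claim is trivial after enlarging $C$.

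\textbf{Where the difficulty lies.} The analytic work has already been carried out in \cref{prop:energy_est_upper}; what remains is essentially the optimization above. Its decisive feature is that equating the decaying elastic term $2^{-m}$ with the diverging surface cost $\epsilon/r_m$ forces the ratio $q$ itself to shrink geometrically with $m$, which produces the quadratic exponent in $r_m = 2^{-m^2}$. It is precisely this quadratic dependence that accounts for the non-algebraic bound $\exp(-c|\log\epsilon|^{1/2})$, in contrast with the $\epsilon^{2/3}$ power law of the compatible two-state problem, and it reflects the ``near-rigidity'' of the $T_3$ structure. The only bookkeeping nuisance -- the divisibility constraint $r_{k-2}/(2 r_{k+1}) \in \N$ -- is handled automatically by the dyadic choice $q = 2^{-n}$.
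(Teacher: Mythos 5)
Your optimization step (dyadic scales $r_k = q^k$ with $q = 2^{-m}$, then maximizing $m$ subject to $m(m+1)\leq|\log_2\epsilon|$) is correct and matches the paper's approach; it is, if anything, cleaner than the paper's because the dyadic choice automatically handles the divisibility constraint $r_{k-2}/(2r_{k+1})\in\N$ that the paper glosses over when writing $r\sim\epsilon^{1/(m+1)}$. The concluding paragraph correctly identifies the mechanism behind the non-algebraic rate.

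However, there is a genuine gap in the reduction step. You write that the outer lamination realizing $F$ in terms of the $S_j$'s is ``executed at a fixed scale independent of $\epsilon$ with a boundary cut-off analogous to the one of \cref{sec:upper}'' and that it ``contributes only a constant (depending on $F$) to both $E_{el}$ and $E_{surf}$.'' A fixed additive constant in $E_{surf}$ is harmless after multiplication by $\epsilon$, but a fixed positive additive constant in $E_{el}$ is fatal: in the cut-off region the deformation $u$ interpolates between the laminate values and $F$, so it does not lie in $\K$, and if the cut-off width is tied to a fixed lamination scale, that region has fixed positive measure, forcing $E_{el}\geq c(F)>0$ uniformly in $\epsilon$. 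This contradicts the bound $E_\epsilon\leq C\exp(-c|\log\epsilon|^{1/2})\to 0$. What the paper actually does (``after two additional iterations'') is to absorb the outer laminations $F\to F_1,F_2\to A_j,S_j$ into the iterative scheme itself, so that they are carried out at scales $r_{-1}>r_0$ lying in the same geometric sequence as the inner ones -- in particular shrinking as $\epsilon\to 0$. With your dyadic choice, the fix is to set $r_{-1}=q^{-1}r_0$ wait, rather to relabel and take two extra levels at scales $q$ and $q^2$ preceding the original first level; then the outer cut-off contributes $O(q)=O(2^{-m})$ rather than $O(1)$, and the rest of your optimization goes through with $m$ replaced by $m+2$, which changes nothing in the asymptotics. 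Apart from this, you should also record (as the paper does implicitly) that the inner iterative construction is applied in $S_j$-cells that are cuboids rather than unit cubes, which requires a standard rescaling of the construction from \cref{prop:energy_est_upper}.
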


\begin{proof}
\emph{Step 1: Conclusion of the argument for $F=S_3$.}
Using the sequences constructed in the construction from \cref{prop:energy_est_upper}, implies
\begin{align*}
 E_{\epsilon}(u^{(m)},\chi^{(m)}) \leq C \left( 2^{-m} + \sum_{k=1}^{m} 2^{-k}r + r + \epsilon r^{-m}\right) \leq C \left( 2^{-m} + r + \epsilon r^{-m}\right).
\end{align*}

Optimizing the value of $r$ depending on $\epsilon>0$, we require that \(r \sim \epsilon^{\frac{1}{m+1}}\).
Finally, we balance the resulting contributions and seek for the optimal number of iterations \(m\). This is given by \( 2^{-m} \sim \epsilon^{\frac{1}{m+1}}\), that is \(m \sim |\log\epsilon|^{\frac{1}{2}}\).

Plugging this into the upper bound results in
\begin{align*}
 E_{\epsilon}(u^{(m)},\chi^{(m)}) \leq C \exp\left(-\log(2)|\log(\epsilon)|^{\frac{1}{2}} \right).
\end{align*}
This concludes the proof for the special case $F = S_3$.

\emph{Step 2: Conclusion of the argument for a general boundary datum.}
The situation of other boundary data $F\in \mathcal{K}^{qc}$ can be reduced to the one from \cref{prop:upper} by at most two further iterations. 
Indeed, a general matrix $F\in \R^{3\times 3}$ in the convex hull of $S_1,S_2,S_3$, can be represented as
\begin{align*}
F= \lambda F_1 + (1-\lambda) F_2 = \lambda(\nu_1 A_j + (1-\nu_1) S_j) + (1-\lambda)(\nu_2 A_k + (1-\nu_2) S_k)
\end{align*}
with $\lambda, \nu_1, \nu_2 \in [0,1]$, $j,k\in \{1,2,3\}$ and $k\neq j$. Hence, after two additional iterations compared to the argument from above, we arrive at similar iterative procedures as in the previous subsections. 
In case that $F$ is an element of one of the legs $S_j A_j$ a single iteration suffices to reduce the situation to the above argument. This proves the result for a general boundary condition $F\in \K^{qc} \setminus \K$.

\end{proof}

\subsection{Proof of the lower bound}

In this section, we present the proof of the lower bound from \cref{thm:scaling_T3}. To this end, similarly as in \cite{Ruland22}, we mimic and quantify the analogous argument from the stress-free setting which we briefly recall in the following \cref{sec:stress_free} and for which we will provide a number of auxiliary results in \cref{sec:aux_lower}. The main argument, given in \cref{sec:comp_Fourier}, will then consist of a bootstrap strategy, similar to \cite{Ruland22}, in which we iteratively reduce the possible regions of mass concentration in Fourier space.

Contrary to the previous section, in what follows we will work in a periodic set-up. Since the energy contributions on periodic functions provides a lower bound on the energy contributions of functions with prescribed Dirichlet boundary conditions, we hence also obtain the desired lower bound for the setting of Dirichlet boundary conditions. Indeed, for the elastic energy this is immediate; for the surface energy there is at most an increase by a fixed factor (see the discussion in \cref{lem:per} in \cref{sec:aux_lower}).

\subsubsection{The stress-free argument}
\label{sec:stress_free}

We begin by recalling the argument for the rigidity of the exact inclusion, as we will mimic this on the energetic level.

\begin{prop}
\label{prop:rigid_exact}
Let $u: [0,1]^3 \rightarrow \R^{3 \times 3}$ be a solution to the differential inclusion \eqref{eq:problem}-\eqref{eq:problem1}. Then, there exists $j\in \{1,2,3\}$ such that $u \equiv A_j$ in $[0,1]^3$.
\end{prop}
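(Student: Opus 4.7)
The plan is to exploit the diagonal structure of $A_1,A_2,A_3$ from \eqref{eq:problem1} together with the row-wise divergence-free constraint. Writing $u = \sum_{j=1}^{3} A_j \chi_j$, where $\chi_j$ denotes the indicator of $\Omega_j := \{x: u(x) = A_j\}$, the diagonality of each $A_j$ means that the $i$-th row of $u$ has the form $u_{ii}\,e_i^t$. The condition $\di u = 0$ thus decouples into the three scalar distributional identities $\p_i u_{ii} = 0$ for $i = 1,2,3$, each of which involves only a linear combination of $\chi_2$ and $\chi_3$ (since $A_1 = 0$).

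Using the explicit entries from \eqref{eq:problem1} one computes
\[
u_{11} = -\tfrac{1}{2}\chi_2 + \chi_3, \qquad u_{22} = \tfrac{2}{3}\chi_2 + \chi_3, \qquad u_{33} = 3\chi_2 + \chi_3,
\]
so the constraints $\p_1 u_{11} = \p_2 u_{22} = \p_3 u_{33} = 0$ force these three scalar functions to be a.e. independent of $x_1$, $x_2$ and $x_3$, respectively. The key observation is that in each of these three combinations the value triple of $\alpha \chi_2 + \beta \chi_3$ on $(\Omega_1,\Omega_2,\Omega_3)$ consists of three pairwise distinct real numbers, so the map $(\chi_2,\chi_3)\mapsto \alpha \chi_2+\beta \chi_3$ is injective on $\{(0,0),(1,0),(0,1)\}$. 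Hence $\chi_2$ and $\chi_3$ themselves, and therefore also $\chi_1 = 1 - \chi_2 - \chi_3$, are a.e. independent of $x_1$, $x_2$ and $x_3$.

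Since each $\chi_j$ is a bounded $\{0,1\}$-valued function that is a.e. independent of every coordinate, it must be a.e. constant, and the partition identity $\chi_1 + \chi_2 + \chi_3 = 1$ then forces exactly one $\chi_j$ to equal $1$ a.e., which yields $u \equiv A_j$ for some $j \in \{1,2,3\}$. I do not expect any real obstacle: the rigidity is essentially algebraic once one uses the diagonal form of the wells, the only standard ingredient being that a bounded measurable function $f$ on $[0,1]^3$ satisfying $\p_i f = 0$ in the distributional sense admits a representative independent of $x_i$. It is precisely this rigid measure-theoretic reduction that will need to be replaced by a quantitative Fourier-space argument in the subsequent sections in order to pass from exact solutions to the singularly perturbed setting.
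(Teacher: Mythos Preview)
Your proof is correct and rests on the same two observations as the paper's: the diagonal structure of the wells forces $\p_i u_{ii}=0$, and each diagonal entry $u_{ii}$ takes three distinct values and hence determines the phase. The only difference is in packaging: you invert each relation $u_{ii}\leftrightarrow(\chi_1,\chi_2,\chi_3)$ separately to conclude that the indicators are independent of every coordinate, whereas the paper chains pairwise comparisons $u_{ii}=h_{i,j}(u_{jj})$ to successively eliminate variables from a single entry. Your route is slightly more direct for the exact problem; the paper's formulation is chosen because the pairwise comparison maps $h_{i,j}$ are exactly the objects that get quantified (via commutator estimates for $h_{i,j}(m_{j,\mu,\lambda}(D)f_j)$) in the Fourier-space bootstrap of \cref{sec:comp_Fourier}, so its proof is written to foreshadow that structure.
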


\begin{proof}
In the exactly stress-free setting in which the differential inclusion is satisfied exactly, i.e. \(u \in \{A_1,A_2,A_3\}\), the observation that $\di u=0$ and that \(u\) is a diagonal matrix leads to the following three equations
\begin{align*}
\p_1 u_{11} = 0, \ \p_2 u_{22} =0 , \ \p_3 u_{33} =0,
\end{align*}
where $u_{jj}$ denote the diagonal components of the matrix $u$.
As a consequence,
\begin{align*}
u_{11} = f_{1}(x_2,x_3), \  u_{22} = f_{2}(x_1,x_3) , \  u_{33} = f_{3}(x_1,x_2).
\end{align*}
Next we note that the values of $u_{jj}$ determine the ones for $u_{kk}$ if $j\neq k$, i.e. there are functions \(h_{k,j}\) such that \(h_{k,j}(u_{jj}) = u_{kk}\). Hence, comparing the functions $u_{11}$ and $u_{22}$, we first obtain that $u_{11}$ and $u_{22}$ can only be functions of $x_3$.
Indeed it holds
\begin{align} \label{eq:ComparisonExact}
 \p_2 u_{11}(x) = \p_2 \big(h_{1,2}(u_{22}(x))\big) = \p_2 \big( h_{1,2}(f_{13}(x_1,x_3))\big) = 0,
\end{align}
and analogously \(\p_1 u_{22}(x) = 0\). Comparing this to $u_{33}$, we obtain that all three functions must be constant. Hence, any solution to the (exact) differential inclusion must be constant and $u$ is equal to one of the three matrices $A_1,A_2,A_3$ globally. The exact problem is hence rigid.
\end{proof}

Using the ideas from \cite{Ruland22}, we seek to turn this into a corresponding scaling result.
The main difference that arises can be seen in the qualitative rigidity argument above:
Instead of comparing only two diagonal entries like in \cite{Ruland22}, we have to compare twice to deduce that the map is constant.
This will be seen in the quantitative argument for the lower bound below.
Whereas in \cite{Ruland22} there are cones around a single axis (the diagonal entries only depend on one variable), we consider cones around a plane (the diagonal entries depend on two variables).
Furthermore, the bootstrap argument will be slightly modified as it resembles the comparison of the diagonal entries in the qualitative argument given above.

\subsubsection{Reduction to the periodic setting and auxiliary results for the elastic energy}
\label{sec:aux_lower}

In this subsection, we provide a number of auxiliary results which we will exploit in the following bootstrap arguments for deducing the lower bound.
As a first step, we reduce to the situation of periodic deformations.

\begin{lem}
\label{lem:per}
Let $\Omega = [0,1]^3$, let $\mathcal{K}:=\{A_1,A_2,A_3\}$ be as in \eqref{eq:problem1} and $F\in \mathcal{K}^{qc} \setminus \mathcal{K}$. Let
$E_{\epsilon}(u,\chi)$ be given by \eqref{eq:energy} and set
\begin{align*}
E_{\epsilon}^{per}(u,\chi):= \int\limits_{\mathbb{T}^3}|u-\chi|^2 \mathrm{d}x + \epsilon \int\limits_{\mathbb{T}^3} |\nabla \chi|.
\end{align*}
Let further $\mathcal{D}_F^{per}:=\{u:\mathbb{R}^3 \rightarrow \mathbb{R}^{3 \times 3}: \ \di u=0 \mbox{ in } \mathbb{R}^3, \ \langle  u \rangle = F \}$, where $\langle u \rangle := \int\limits_{\mathbb{T}^3} u(x) \mathrm{d}x$.
Assume that $E_{\epsilon}(u,\chi)\leq 1$ and that there is $\epsilon_0 >0$ such that for any $\nu\in (0,\frac{1}{2})$ there is $c_{\nu}>0$ such that for any $\epsilon \in (0,\epsilon_0)$ it holds that
\begin{align}
\label{eq:lower_per}
E_{\epsilon}^{per}(u,\chi) \geq \exp(-c_{\nu}|\log(\epsilon)|^{\frac{1}{2} + \nu}).
\end{align}
Then, there exists a constant $C>1$ such that for $\tilde\epsilon_0=\tilde\epsilon_0(\nu)>0$ sufficiently small
\begin{align*}
C^{-1} \exp(-c_{\nu}|\log(\epsilon)|^{\frac{1}{2} + \nu})
\leq  \inf\limits_{\chi \in BV([0,1]^3;\mathcal{K})} \inf\limits_{u \in \mathcal{D}_F} E_{\epsilon}(u,\chi)
\end{align*}
for all $\epsilon \in (0,\tilde\epsilon_0)$.
\end{lem}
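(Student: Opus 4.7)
The plan is to show that any admissible Dirichlet pair $(u,\chi)$ can be turned into a periodic competitor on $\T^3$ with essentially the same energy, which then transfers the assumed periodic lower bound to the Dirichlet setting.

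The key preliminary observation is that for any $(u,\chi) \in \mathcal{D}_F \times BV([0,1]^3;\K)$ one has $\int_{[0,1]^3} u\, dx = F$. To see this, set $w := u - F$; then $w$ is divergence-free in $\R^3$ (row-wise) and supported in $\overline{[0,1]^3}$. Multiplying the relation $\sum_{j} \p_j w_{ij} = 0$ by the coordinate function $x_k$ and integrating by parts (justified by compact support) yields, for any $i,k \in \{1,2,3\}$,
\begin{align*}
0 = \int_{\R^3} x_k \sum_{j=1}^3 \p_j w_{ij}\, dx = -\sum_{j=1}^3 \int_{\R^3} \delta_{jk}\, w_{ij}\, dx = -\int_{[0,1]^3} w_{ik}\, dx,
\end{align*}
which gives the claim.

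Next, I would define $\tilde u$ as the $\Z^3$-periodic extension of $u|_{[0,1]^3}$ and $\tilde\chi$ as that of $\chi$. Since $u \in W^{1,1}_{loc}(\R^3)$ and $u = F$ outside $\overline{[0,1]^3}$, the trace of $u$ on $\p [0,1]^3$ is $F$ on each face, so the normal components of $\tilde u$ agree across opposite faces of the cube. Consequently, $\di \tilde u = 0$ distributionally on $\T^3$, and combined with the previous step $\langle \tilde u\rangle_{\T^3} = F$, this gives $\tilde u \in \mathcal{D}_F^{per}$. The elastic energy is preserved,
\begin{align*}
\int_{\T^3} |\tilde u - \tilde\chi|^2\, dx = \int_{[0,1]^3} |u-\chi|^2\, dx,
\end{align*}
whereas the surface energy can only pick up extra contributions from jumps of $\tilde\chi$ across the identified boundary faces, bounded by the total face area $C_0 \leq 6$. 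Hence $E_\epsilon^{per}(\tilde u,\tilde\chi) \leq E_\epsilon(u,\chi) + C_0 \epsilon$.

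Finally, taking the infimum on both sides and invoking the hypothesis \eqref{eq:lower_per} gives
\begin{align*}
\exp(-c_\nu |\log\epsilon|^{\frac{1}{2}+\nu}) \leq \inf\limits_{BV(\T^3;\K)} \inf\limits_{\mathcal{D}_F^{per}} E_\epsilon^{per} \leq \inf\limits_{BV([0,1]^3;\K)} \inf\limits_{\mathcal{D}_F} E_\epsilon + C_0\epsilon.
\end{align*}
Since $\tfrac{1}{2} + \nu < 1$, the correction $C_0\epsilon$ is negligible compared to $\exp(-c_\nu|\log\epsilon|^{\frac{1}{2}+\nu})$ for $\epsilon$ sufficiently small, and can be absorbed into the left-hand side, yielding the desired comparison with $C = 2$. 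The main technical subtlety is the preservation of both the divergence-free condition and the mean $F$ under periodization, which both rely on the compact-support integration by parts above; the surface-energy accounting on the seam is then straightforward.
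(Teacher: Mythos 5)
Your proof is correct and takes essentially the same approach as the paper: periodize $(u,\chi)$ from $[0,1]^3$ to $\T^3$, observe that the Dirichlet data $u=F$ on $\R^3\setminus\overline{\Omega}$ yields both the divergence constraint on the torus and the mean-value condition $\langle u\rangle = F$, bound the added surface energy at the seam by a fixed constant, and absorb the resulting $C\epsilon$ error using $\tfrac12+\nu<1$. The only difference is that you spell out the mean-value property via integration by parts against $x_k$, which the paper merely asserts.
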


\begin{proof}
In order to infer the lower bound, we show that any function $u:[0,1]^3 \rightarrow \R^3$ with constant boundary data can be associated with a suitable periodic function which has the boundary data of $u$ as its mean value and satisfies related energy estimates. Indeed, for given $u \in \mathcal{D}_F$, we view it as a function on $\T^3$ by restriction. By the prescribed boundary data it still satisfies the differential constraint and further the mean value property. Moreover,
\begin{align*}
\inf\limits_{u \in \mathcal{D}^{per}_F} E_{el}^{per}(u,\chi) := \inf_{u \in \mathcal{D}_F^{per}} \int_{\T^3} |u-\chi|^2 dx
\leq  \inf\limits_{u \in \mathcal{D}_F} E_{el}(u,\chi).
\end{align*}
Next, viewing $\chi:[0,1]^3 \rightarrow \{0,1\}$ as a periodic function $\tilde{\chi}:\T^3 \rightarrow \{0,1\}$, we infer that
\begin{align*}
\int\limits_{\T^3} |\nabla \tilde{\chi}| \leq \int\limits_{[0,1]^3} |\nabla \chi| + C.
\end{align*}
Now, due to \eqref{eq:lower_per}, we obtain that
\begin{align*}
\exp(-c_{\nu}|\log(\epsilon)|^{\frac{1}{2} + \nu}) \leq \inf\limits_{\chi \in BV(\T^3;\mathcal{K})} \inf\limits_{u \in \mathcal{D}^{per}_F} E_{\epsilon}^{per}(u,\chi)
\leq  \inf\limits_{\chi \in BV([0,1]^3;\mathcal{K})} \inf\limits_{u \in \mathcal{D}_F} E_{\epsilon}(u,\chi) + C \epsilon.
\end{align*}
For $\tilde\epsilon_0(\nu)>0$ sufficiently small, the last right hand side term may thus be absorbed into the left hand side, yielding the desired result.
\end{proof}

With this result in hand it suffices to consider the periodic set-up in the remainder of this section.
This will, in particular, allow us to rely on the periodic Fourier transform in deducing lower bounds for the elastic energy. 
In what follows all (semi-)norms will thus be considered on the torus. With a slight abuse of notation, we will often omit this dependence.

\begin{lem}\label{lem:FourierElastic}
Let \(F \in \K^{qc}\), where \(\K = \{A_1,A_2,A_3\}\) with \(A_j\) in \eqref{eq:problem1}, and \(E_{el}^{per}\) and $\mathcal{D}_F^{per}$ be as in \cref{lem:per} and as in \eqref{eq:admissible}. Then, it holds for any \(\chi \in L^2([0,1]^3;\K)\) and for \(E_{el}^{per}(\chi;F) := \inf_{u \in \mathcal{D}_F^{per}} \int_{\T^3} \big| u- \chi \big|^2 dx \)
 \begin{align*}
  E_{el}^{per}(\chi;F) = \sum_{k \in \Z^3 \setminus \{0\}} \sum_{i=1}^{3} \frac{k_i^2}{|k|^2} |\hat{\chi}_{i,i}|^2 + |\hat{\chi}(0)-F|^2,
 \end{align*}
 where \(\chi_{i,i}\) are the diagonal entries of \(\chi\) and \(\hat{\chi}\) is the (discrete) Fourier transform of \(\chi\).
\end{lem}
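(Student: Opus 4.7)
The plan is to adapt the Fourier characterization from \cref{lem:AFreeElast} to the periodic setting, and then use the specific diagonal structure of the wells $A_1,A_2,A_3$ to get the clean formula.

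First I would write $u = F + w$ where $w : \T^3 \to \R^{3\times 3}$ is periodic with $\langle w \rangle = 0$, $\di w = 0$. Then
\begin{align*}
 \int_{\T^3} |u-\chi|^2 dx = |F - \hat{\chi}(0)|^2 + \sum_{k \in \Z^3 \setminus \{0\}} |\hat{w}(k) - \hat{\chi}(k)|^2,
\end{align*}
by Plancherel, where the zero mode uses $\hat{w}(0)=0$. The constraint $\di u = 0$ translates via Fourier into $\hat{w}(k)k = 0$ for each $k \neq 0$, which is the only coupling in the minimization problem for $w$ mode-by-mode.

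Next, for each fixed $k \neq 0$, I would minimize $|\hat{w}(k) - \hat{\chi}(k)|^2$ over matrices $M \in \C^{3\times 3}$ satisfying $Mk = 0$. The key observation is that this constraint \emph{decouples across the rows} of $M$: denoting by $M_i, \chi_i$ the $i$-th rows of $M$ and $\hat{\chi}(k)$, the problem reduces to minimizing $\sum_{i=1}^3 |M_i - \chi_i|^2$ under $M_i \cdot k = 0$ for each $i$. The minimizer is the orthogonal projection onto $k^\perp$, so $M_i = \chi_i - (\chi_i \cdot \frac{k}{|k|})\frac{k}{|k|}$ and hence
\begin{align*}
 |M_i - \chi_i|^2 = \frac{|\chi_i \cdot k|^2}{|k|^2}.
\end{align*}

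Finally I would use that $\chi$ takes values in $\{A_1,A_2,A_3\}$, which are all diagonal matrices, so $\hat{\chi}(k)$ is diagonal for every $k$; in particular $\chi_i = \hat{\chi}_{i,i}(k) e_i$, giving $|\chi_i \cdot k|^2 = k_i^2 |\hat{\chi}_{i,i}(k)|^2$. Summing over $i$ and $k\neq 0$ and adding the zero-mode contribution yields the stated identity. There is no real obstacle: the argument is a direct projection computation in Fourier space; the only point requiring minor care is to justify that a minimizing $w$ exists in $\mathcal{D}_F^{per}$ (equivalently, that the mode-wise minimizers assemble into a function in $L^2(\T^3;\R^{3\times 3})$), which follows since the resulting sequence $\{\hat{w}(k)\}_k$ is square-summable by virtue of $\chi \in L^\infty \subset L^2(\T^3)$ and the pointwise bound $|\hat{w}(k)| \leq |\hat{\chi}(k)|$.
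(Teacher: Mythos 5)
Your proposal is correct and follows essentially the same route as the paper: Plancherel plus the Fourier-space form $i\hat{u}(k)k=0$ of the divergence constraint, mode-by-mode minimization via orthogonal projection of $\hat{\chi}(k)$ onto $\{M : Mk=0\}$ applied row-wise, and finally the diagonality of $A_1,A_2,A_3$ to identify $|\hat{\chi}(k)\frac{k}{|k|}|^2$ with $\sum_i \frac{k_i^2}{|k|^2}|\hat{\chi}_{i,i}(k)|^2$. The cosmetic shift $u = F + w$ (rather than prescribing $\hat u(0)=F$ directly) and the closing remark on square-summability of the minimizing modes are harmless additions not present in the paper, but the argument is the same.
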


\begin{proof}
 We first calculate \(E_{el}^{per}(u,\chi)\) in Fourier space
  \begin{align*}
  E_{el}^{per}(u,\chi) = \int_{\T^3} |u-\chi|^2 dx = \sum_{k \in \Z^3} |\hat{u} - \hat{\chi}|^2,
 \end{align*}
which allows us to characterize minimizers of the elastic energy.

 In order to minimize this elastic energy in \(u \in \mathcal{D}_F^{per}\), \(\hat{u}\) has to be the (pointwise) orthogonal projection of \(\hat{\chi}\) onto the orthogonal complement of \(k\), as the differential constraint \(\di u = 0\) reads \(i \hat{u} k = 0\) in Fourier space.
Noting that the row-wise orthogonal projection of a matrix \(M\) onto \(\vspan(k)\) can be written as \(\Pi_k(M) = (M \frac{k}{|k|}) \otimes \frac{k}{|k|}\), the optimal \(\hat{u}\) is given by
 \begin{align*}
  \hat{u} = \Pi_{k^\perp} \hat{\chi} = (\Id - \Pi_{k}) \hat{\chi} = \hat{\chi} - ( \hat{\chi} \frac{k}{|k|}) \otimes \frac{k}{|k|}
 \end{align*}
for any \(k \in \Z^3 \setminus \{0\}\).

Returning to our energy, this yields
\begin{align*}
 E_{el}^{per}(\chi;F) &= E_{el}^{per}(u,\chi) = \sum_{k \in \Z^3\setminus\{0\}} |\hat{\chi} - (\hat{\chi} \frac{k}{|k|}) \otimes \frac{k}{|k|} - \hat{\chi}|^2 +|F - \hat{\chi}(0)|^2 \\
 & = \sum_{k \in \Z^3 \setminus \{0\}} |\hat{\chi} \frac{k}{|k|}|^2 + |\hat{\chi}(0)-F|^2 \\
 &=  \sum_{k \in \Z^3 \setminus \{0\}} \sum_{i=1}^{3} \frac{k_i^2}{|k|^2} |\hat{\chi}_{i,i}|^2 + |\hat{\chi}(0)-F|^2
\end{align*}
and shows the claim.
\end{proof}

Next, following the ideas from \cite{Ruland22}, for \(\mu,\lambda >0, j=1,2,3\), we introduce the cones
\begin{align} \label{eq:T3Cones}
 C_{j,\mu,\lambda} = \{k \in \Z^3 : |k_j| \leq \mu |k|, |k| \leq \lambda\},
\end{align}
and their corresponding cut-off functions \(m_{j,\mu,\lambda}(k) \in C^\infty(C_{j,2\mu,2\lambda}\setminus \{0\};[0,1])\) fulfilling \(m_{j,\mu,\lambda} = 1\) on \(C_{j,\mu,\lambda}\), $\supp(m_{j,\mu,\lambda}(k))\subset C_{j,2\mu,2\lambda}$ and the decay properties in Marcinkiewicz's multiplier theorem (see, for instance, \cite[Corollary 6.2.5]{G14}).
The corresponding cut-off multiplier is thus defined by
\begin{align} \label{eq:CutOffMult}
 m_{j,\mu,\lambda}(D) f = \mathcal{F}^{-1}(m_{j,\mu,\lambda}(\cdot) \hat{f}(\cdot)).
\end{align}

Furthermore, we use the following results which are shown in \cite[Lemma 2, Lemma 3, and Corollary 1]{Ruland22}. Following the conventions from \cite{RT21}, with slight abuse of notation compared to our setting in the first part of the article, in the whole following section, we now use $d$ to denote the degree of some suitable polynomials and no longer the dimension of the ambient space which in the whole section is simply fixed to be equal to three.

\begin{lem} \label{lem:ResultsRT21}
Let $\beta,\delta, \mu,\lambda>0$. Let $m_{i,\mu,\lambda}(D)$ denote the Fourier multipliers associated with the cones $C_{i,\mu,\lambda}$ for $i\in\{1,2,3\}$ as defined in \eqref{eq:T3Cones} with the corresponding multipliers given in \eqref{eq:CutOffMult}.
 Let \(f_i \in L^{\infty}(\T^3) \cap BV(\T^3)\) for \(i=1,2,3\) and let \(h_{j,i}: \R \to \R\) be nonlinear polynomials (of degree \(d\)) with \(h_{j,i}(0) = 0\) such that \(h_{j,i}(f_i) = f_j\) for \(i \neq j\).
 If
 \begin{align*}
  \sum_{i=1}^3 \Vert \p_i f_i \Vert_{\dot{H}^{-1}}^2 \leq \delta, \quad \sum_{i=1}^3 \Vert \nabla f_i \Vert_{TV} \leq \beta,
 \end{align*}
then there exist constants $C = C(h_{i,j},\Vert f_i \Vert_\infty),C' = C'(h_{i,j},\Vert f_i \Vert_\infty, d),C_0 = C_0(h_{i,j},\Vert f_i \Vert_\infty,d)>0$ such that for any $\gamma \in (0,1)$ we have for any \(i \neq j\)
\begin{align*}
 \sum_{k=1}^3 \Vert f_k - m_{k,\mu,\lambda}(D)f_k \Vert_{L^2}^2
 &\leq C(\mu^{-2}\delta + \lambda^{-1}\beta), \\
 \Vert f_i - h_{i,j}(m_{j,\mu,\lambda}(D)f_j) \Vert_{L^2}
 & \leq \frac{C'}{\gamma^{12 d}} \Vert f_j - m_{j,\mu,\lambda}(D)f_j \Vert_{L^2}^{1-\gamma}, \\
 \Vert h_{i,j}(m_{j,\mu,\lambda}(D)f_j) - m_{i,\mu,\lambda}(D)f_i\Vert_{L^2}^2
 & \leq \frac{C_0}{\gamma^{24 d}} \max\{(\mu^{-2}\delta + \lambda^{-1}\beta)^{1-\gamma},\mu^{-2}\delta+\lambda^{-1}\beta\}.
\end{align*}
Here we choose the constants such that \(C_0 > 2C+2C'^2+3\).
\end{lem}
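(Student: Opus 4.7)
The plan is to establish the three estimates separately, in the stated order, essentially adapting the arguments from \cite{Ruland22, RT21} to the present three-dimensional setting; the structural ingredients are an elastic-vs-surface Fourier splitting outside the cone $C_{k,\mu,\lambda}$, a polynomial composition estimate for $h_{i,j}$, and a triangle inequality.

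For the first estimate, I would split the Fourier complement of the cone $C_{k,\mu,\lambda}$ into the ``angular'' piece $\{k \in \Z^3\setminus\{0\} : |k_k|>\mu|k|\}$ and the ``high-frequency'' piece $\{|k|>\lambda\}$. On the angular piece, the lower bound $|k_k|/|k|\geq \mu$ together with Plancherel yields $\sum_{|k_k|>\mu|k|}|\hat{f}_k(k)|^2 \leq \mu^{-2} \sum_{k\in \Z^3\setminus\{0\}}\frac{k_k^2}{|k|^2}|\hat{f}_k(k)|^2 = \mu^{-2}\Vert \p_k f_k\Vert_{\dot{H}^{-1}}^2$, which by hypothesis is bounded by $\mu^{-2}\delta$. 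On the high-frequency piece, the standard BV Fourier-decay estimate (i.e.\ $\sum_{|k|>\lambda}|\hat{f}(k)|^2 \leq C\lambda^{-1}\Vert \nabla f\Vert_{TV}$, as in \cite{KKO13,Ruland22}) gives $\lambda^{-1}\Vert \nabla f_k\Vert_{TV} \leq \lambda^{-1}\beta$. Since $m_{k,\mu,\lambda}$ equals $1$ on $C_{k,\mu,\lambda}$ and is supported in $C_{k,2\mu,2\lambda}$, the contribution from the transition region is handled by the same two bounds up to a universal constant, and summing over $k=1,2,3$ yields the first inequality.

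For the second estimate, the hypothesis $h_{i,j}(f_j)=f_i$ gives
$\Vert f_i - h_{i,j}(m_{j,\mu,\lambda}(D)f_j)\Vert_{L^2} = \Vert h_{i,j}(f_j) - h_{i,j}(m_{j,\mu,\lambda}(D)f_j)\Vert_{L^2}$.
Writing $g := m_{j,\mu,\lambda}(D)f_j$ and using that $h_{i,j}$ is a polynomial of degree $d$ with $h_{i,j}(0)=0$, expansion of $h_{i,j}(f_j)-h_{i,j}(g) = (f_j-g)\cdot P(f_j,g)$ for an explicit polynomial $P$ of degree $d-1$, followed by H\"older's inequality, yields a bound of the form $\Vert f_j-g\Vert_{L^{2/(1-\gamma)}} \cdot \Vert P(f_j,g)\Vert_{L^{2/\gamma}}$. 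The polynomial factor is controlled by $\Vert f_j\Vert_{L^\infty}$ (bounded by hypothesis) and $\Vert g\Vert_{L^{2(d-1)/\gamma}}$, which in turn is controlled via Marcinkiewicz's multiplier theorem by a constant of the form $(\gamma^{-1})^{c\cdot d}\Vert f_j\Vert_{L^{2(d-1)/\gamma}}$, ultimately bounded by $\Vert f_j\Vert_{L^\infty}$. Interpolating $\Vert f_j-g\Vert_{L^{2/(1-\gamma)}}$ between $L^2$ and $L^\infty$ (using that $\Vert f_j-g\Vert_{L^\infty} \leq \Vert f_j\Vert_{L^\infty}+\Vert g\Vert_{L^\infty}$ is bounded through the Marcinkiewicz bound for high but finite $p$) produces the power $\Vert f_j-g\Vert_{L^2}^{1-\gamma}$, with the accumulated $\gamma^{-1}$ factors tracked explicitly yielding the stated $\gamma^{-12d}$ constant.

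For the third estimate, I would invoke the triangle inequality
$\Vert h_{i,j}(m_{j,\mu,\lambda}(D)f_j)-m_{i,\mu,\lambda}(D)f_i\Vert_{L^2} \leq \Vert h_{i,j}(m_{j,\mu,\lambda}(D)f_j)-f_i\Vert_{L^2} + \Vert f_i-m_{i,\mu,\lambda}(D)f_i\Vert_{L^2}$,
bound the first term on the right by the second estimate combined with the first applied to $f_j$ (giving a factor $(\mu^{-2}\delta+\lambda^{-1}\beta)^{(1-\gamma)/2}$ up to $\gamma^{-12d}$), and the second term by the first estimate applied to $f_i$ (giving a factor $(\mu^{-2}\delta+\lambda^{-1}\beta)^{1/2}$). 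Squaring and using $(a+b)^2\leq 2a^2+2b^2$ leads to the max-bound, with the worse of the two exponents dominating, and the constants combine to $C_0/\gamma^{24d}$ with $C_0 > 2C+2C'^2+3$, as required.

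The main obstacle is the second estimate, since $g=m_{j,\mu,\lambda}(D)f_j$ is only in $L^p$ for $p<\infty$ with Marcinkiewicz constant growing in $p$, so the Hölder split and the final interpolation must be balanced precisely against the degree $d$ of the polynomial $h_{i,j}$ in order to produce an exponent of the form $\gamma^{-12d}$ that is uniform in $\mu,\lambda$; the other two estimates are then relatively direct consequences of Fourier splitting and the triangle inequality.
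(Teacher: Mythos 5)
The paper does not in fact contain a proof of this lemma: the text immediately preceding it states ``we use the following results which are shown in \cite[Lemma 2, Lemma 3, and Corollary 1]{Ruland22}'', and the lemma is then recorded verbatim together with a short interpretive commentary (frequency localization via elastic and surface control, a commutator bound from the nonlinear relation $h_{i,j}(f_j) = f_i$, and a combination of the two). There is therefore no in-paper argument to compare your sketch against line by line.

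That said, your reconstruction follows exactly the strategy the paper attributes to \cite{Ruland22}, and the three steps you outline are the right ones. The first estimate is the cleanest: the complement of $C_{k,\mu,\lambda}$ is covered by the angular region $\{|k_k|>\mu|k|\}$ (controlled by $\mu^{-2}\Vert\partial_k f_k\Vert_{\dot H^{-1}}^2$ via the pointwise bound $1<\mu^{-2}k_k^2/|k|^2$ and Plancherel) and the high-frequency region $\{|k|>\lambda\}$ (controlled by the standard BV Fourier-decay estimate $\lambda^{-1}\Vert\nabla f_k\Vert_{TV}$), with the smooth transition of $m_{k,\mu,\lambda}$ absorbed into constants; your argument is correct. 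The third estimate is indeed a triangle inequality plus the first two, and the bookkeeping you indicate ($(a+b)^2\le 2a^2+2b^2$, $C_0>2C+2C'^2+3$) matches the structure of the stated constant. The delicate point, as you correctly flag, is the second estimate: you need the polynomial difference $h_{i,j}(f_j)-h_{i,j}(g)=(f_j-g)P(f_j,g)$, a H\"older split with exponents $\frac{2}{1-\gamma},\frac{2}{\gamma}$, and an $L^2$--$L^p$ interpolation for large but finite $p$ since Marcinkiewicz does not yield an $L^\infty$ bound for $g=m_{j,\mu,\lambda}(D)f_j$. Your sketch identifies all the necessary ingredients, but you do not actually verify that the accumulated Marcinkiewicz constants, raised to the polynomial degree and combined with the interpolation exponent, land on the stated $\gamma^{-12d}$ (this exponent encodes the three-dimensional multiplier constant and the degree $d$, and it is not obvious from your outline that the bookkeeping closes with exactly this power rather than some other $\gamma^{-cd}$). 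That is the one place where the proposal is a sketch rather than a proof; filling it in would essentially reproduce \cite[Lemma 3]{Ruland22} in the three-dimensional setting, which is what the paper itself defers to.
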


Let us comment on these bounds: The functions \(f_i\) are representing the diagonal entries of the phase indicator \(\chi \in BV([0,1]^3;\K)\). Thus the first estimate corresponds to a first frequency localization by exploiting the surface energy control for the high frequencies and the ellipticity of the elastic energy away from the cones $C_{j,\mu,\lambda}$.
It can be viewed as a quantified version of the statement that \(u_{11}\) is a function only depending on \(x_2,x_3\) in \cref{prop:rigid_exact}.
The second estimate is a commutator bound that arises from the nonlinear relation \(h_{i,j}(f_j) = f_i\) for \(i \neq j\).
The third estimate combines the first two bounds.
The second and third estimate will form the core tool to iteratively decrease the Fourier support of the characteristic functions of our phase indicators. We will detail this in the remainder of the article.

\begin{rmk}
It is possible to make the mappings \(h_{j,i}\) for \(i,j \in \{1,2,3\}, i \neq j\) explicit for the choice of matrices \(A_1,A_2,A_3\), c.f. \eqref{eq:problem1}. To this end, we may, for instance, consider
 \begin{align*}
  h_{1,2}(x) & = \frac{14}{9}x^2 - \frac{5}{9}x, & h_{1,3}(x) & = \frac{14}{3}x^2 - \frac{11}{3}x,\\
  h_{2,1}(x) & = \frac{21}{4}x^2-\frac{17}{4}x, & h_{2,3}(x) & = -\frac{21}{2}x^2+\frac{23}{2}x,\\
  h_{3,1}(x) & = -\frac{7}{12}x^2+\frac{19}{12}x, & h_{3,2}(x) & = -\frac{7}{18}x^2+\frac{25}{18}x.
 \end{align*}
\end{rmk}

\subsubsection{Comparison argument in Fourier space}
\label{sec:comp_Fourier}

In this section, we carry out the iterative bootstrap argument which allows us to deduce the final rigidity result.

As a first step of the bootstrap argument, we invoke the results from above which allow us to decrease the region of potential Fourier concentration from the cone $C_{1,\mu,\lambda}$ to a cone $C_{1,\mu,\lambda'}$ with \(\lambda' < \lambda\).
This resembles \eqref{eq:ComparisonExact} in the exactly stress-free setting in a quanitified version. As \(f_1\) is determined by \(f_2\) and \(f_3\) with the help of \(h_{1,j}\) we can reduce \(\lambda\), similarly as in our reduction of the dependences of \(u_{11}\) in \cref{prop:rigid_exact}.

\begin{lem}\label{lem:FirstComparison}
 Under the same conditions as in \cref{lem:ResultsRT21} and for
 \begin{align*}
  \lambda > 0, \  \mu \in (0, \frac{1}{2 \sqrt{2d^2+1}}), \ \lambda' \in (\frac{1}{\sqrt{2}} \frac{4 d \lambda \mu}{\sqrt{1-4\mu^2}}, \lambda) 
 \end{align*}
 it holds
 \begin{align*}
    \Vert f_1 - m_{1,\mu,\lambda'}(D)f_1 \Vert_{L^2}^2 & +\Vert f_2 - m_{2,\mu,\lambda}(D)f_2 \Vert_{L^2}^2 +\Vert f_3 - m_{3,\mu,\lambda}(D)f_3 \Vert_{L^2}^2 \\
    & \leq 10 \frac{C_0}{\gamma^{24 d}} \max\{(\mu^{-2}\delta + \lambda^{-1}\beta)^{1-\gamma},\mu^{-2}\delta+\lambda^{-1}\beta\}.
 \end{align*}
 Here the constant \(C_0\) is chosen to be the same as in \cref{lem:ResultsRT21}.
\end{lem}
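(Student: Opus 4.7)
The plan is to combine the three estimates from \cref{lem:ResultsRT21} with a geometric Fourier-space computation. The bounds for $f_2$ and $f_3$ in the claimed sum follow directly from the first (linear) estimate of \cref{lem:ResultsRT21}, so the heart of the proof is the reduction of the cone radius from $\lambda$ to $\lambda'$ for $f_1$. This quantitatively mirrors the step in \cref{prop:rigid_exact} in which the two chain-rule relations $u_{11} = h_{1,2}(u_{22})$ and $u_{11} = h_{1,3}(u_{33})$, together with the intrinsic $x_1$-independence of $u_{11}$, eliminate all variable dependencies of $u_{11}$.

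For the $f_1$-bound I would set $g_2 := h_{1,2}(m_{2,\mu,\lambda}(D) f_2)$ and $g_3 := h_{1,3}(m_{3,\mu,\lambda}(D) f_3)$. Since $h_{1,2}$ and $h_{1,3}$ are polynomials of degree at most $d$ vanishing at zero, the Fourier support of $g_2$ lies in the $d$-fold Minkowski sum of $\supp(m_{2,\mu,\lambda}) \subseteq C_{2,2\mu,2\lambda}$, hence in the slab $\{|k_2| \leq 4 d \mu \lambda\}$, and symmetrically $g_3$ is Fourier-supported in $\{|k_3| \leq 4 d \mu \lambda\}$. The geometric core is that on $C_{1,2\mu,2\lambda} \cap \{|k_2| \leq 4 d \mu \lambda\} \cap \{|k_3| \leq 4 d \mu \lambda\}$ any frequency satisfies
\begin{align*}
 |k|^2 = k_1^2 + k_2^2 + k_3^2 \leq 4 \mu^2 |k|^2 + 32 d^2 \mu^2 \lambda^2,
\end{align*}
so that $|k|^2 \leq \tfrac{32 d^2 \mu^2 \lambda^2}{1 - 4\mu^2} \leq (2\lambda')^2$ by the lower bound on $\lambda'$ in the hypothesis; combined with $|k_1| \leq 2 \mu |k|$ this places such $k$ inside $\supp(m_{1,\mu,\lambda'}) \subseteq C_{1,2\mu,2\lambda'}$.

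To exploit this I would decompose
\begin{align*}
 f_1 - m_{1,\mu,\lambda'}(D) f_1 = (1 - m_{1,\mu,\lambda'}(D))(f_1 - g_2) + (1 - m_{1,\mu,\lambda'}(D)) g_2,
\end{align*}
control the first summand by $\Vert f_1 - g_2 \Vert_{L^2}$ via the second estimate of \cref{lem:ResultsRT21} together with the first estimate applied to $f_2$, and then bound the Fourier mass of $g_2$ outside $\supp(m_{1,\mu,\lambda'})$ by partitioning the bad frequency region into the pieces $\{|k_3| > 4 d \mu \lambda\}$, $C_{1,2\mu,2\lambda}^c$, and the remainder (which, by the geometric observation, is contained in $\supp(m_{1,\mu,\lambda'})$ and therefore absorbed by the cut-off). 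On the first piece $g_3$ has no Fourier mass, so Parseval bounds the contribution by $\Vert g_2 - g_3 \Vert_{L^2}^2$; on the second piece $m_{1,\mu,\lambda}(D) f_1$ has no Fourier mass, so the analogous argument bounds the contribution by $\Vert g_2 - m_{1,\mu,\lambda}(D) f_1 \Vert_{L^2}^2$. Both errors are controlled by the third estimate of \cref{lem:ResultsRT21}, and $\Vert g_2 - g_3 \Vert_{L^2}$ is obtained by a triangle inequality through $f_1$.

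Finally, assembling the finitely many $L^2$-contributions through the triangle inequality, each is a constant times $\tfrac{C_0}{\gamma^{24 d}} \max\{(\mu^{-2}\delta + \lambda^{-1}\beta)^{1-\gamma}, \mu^{-2}\delta + \lambda^{-1}\beta\}$, and the choice $C_0 \geq 2 C + 2 {C'}^2 + 3$ from \cref{lem:ResultsRT21} lets all contributions fit inside the factor $10$. The main obstacle is the careful bookkeeping near the transition annulus $C_{1,2\mu,2\lambda'} \setminus C_{1,\mu,\lambda'}$ produced by the smooth cut-off $m_{1,\mu,\lambda'}$, together with the sharp matching between the hypothesis $\lambda' > \tfrac{1}{\sqrt 2} \cdot \tfrac{4 d \lambda \mu}{\sqrt{1 - 4 \mu^2}}$ and the quadratic Fourier inequality above that pins down the shrunken radius.
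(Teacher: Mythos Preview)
Your overall strategy and the Fourier-geometric computation are correct and match the paper's, but the pivot you choose is different and this creates a genuine gap. The paper does \emph{not} decompose through $g_2 = h_{1,2}(m_{2,\mu,\lambda}(D)f_2)$; instead it inserts $m_{1,\mu,\lambda}(D)f_1$ and writes
\[
\Vert f_1 - m_{1,\mu,\lambda'}(D)f_1\Vert_{L^2}^2 \leq 2\Vert f_1 - m_{1,\mu,\lambda}(D)f_1\Vert_{L^2}^2 + 2\Vert (m_{1,\mu,\lambda}(D) - m_{1,\mu,\lambda'}(D))f_1\Vert_{L^2}^2.
\]
The first term is the first estimate of \cref{lem:ResultsRT21}. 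For the second, the paper uses the contrapositive of your geometric observation to get $C_{1,2\mu,2\lambda}\setminus C_{1,2\mu,2\lambda'}\subset K_2\cup K_3$ with $K_i=\{|k_i|>4d\mu\lambda\}$, hence $|m_{1,\mu,\lambda}-m_{1,\mu,\lambda'}|\leq \chi_{K_2\cup K_3}\,m_{1,\mu,\lambda}$, and then on each $K_i$ compares $m_{1,\mu,\lambda}(D)f_1$ directly with $h_{1,i}(m_{i,\mu,\lambda}(D)f_i)$ (which vanishes on $K_i$) via the \emph{third} estimate of \cref{lem:ResultsRT21}. This yields $2C(\mu^{-2}\delta+\lambda^{-1}\beta)+8\tfrac{C_0}{\gamma^{24d}}\max\{\dots\}\leq 10\tfrac{C_0}{\gamma^{24d}}\max\{\dots\}$ on the nose, with no detour through $\Vert g_2-g_3\Vert$ or a separate treatment of $C_{1,2\mu,2\lambda}^c$.

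The step where your argument actually breaks is the ``remainder'' piece. Being contained in $\supp(m_{1,\mu,\lambda'})\subset C_{1,2\mu,2\lambda'}$ does \emph{not} mean $1-m_{1,\mu,\lambda'}$ vanishes there; that only happens on $C_{1,\mu,\lambda'}$. Your remainder is precisely the transition annulus $C_{1,2\mu,2\lambda'}\setminus C_{1,\mu,\lambda'}$ intersected with the slab, and on it $(1-m_{1,\mu,\lambda'}(D))g_2$ need not be zero, nor do any of your comparison functions ($g_3$, $m_{1,\mu,\lambda}(D)f_1$) vanish there. You flag this as ``the main obstacle'' but do not resolve it. The paper's pivot through $m_{1,\mu,\lambda}(D)f_1$ avoids the issue entirely, because the symbol $m_{1,\mu,\lambda}-m_{1,\mu,\lambda'}$ (with compatibly constructed cut-offs) is itself supported in $K_2\cup K_3$, so no annulus term ever appears.
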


\begin{rmk}
We remark that the interval for \(\lambda'\) is chosen such that it holds \(C_{1,2\mu,2\lambda} \setminus C_{1,2\mu,2\lambda'} \subset \{\max\{|k_2|,|k_3|\} > 4 d \mu \lambda\} \cap C_{1,2\mu,2\lambda}\) and the one for \(\mu\) such that \(\frac{1}{\sqrt{2}} \frac{4d\lambda\mu}{\sqrt{1-4\mu^2}} < \lambda\), i.e. the interval for \(\lambda'\) is non-empty.
\end{rmk}

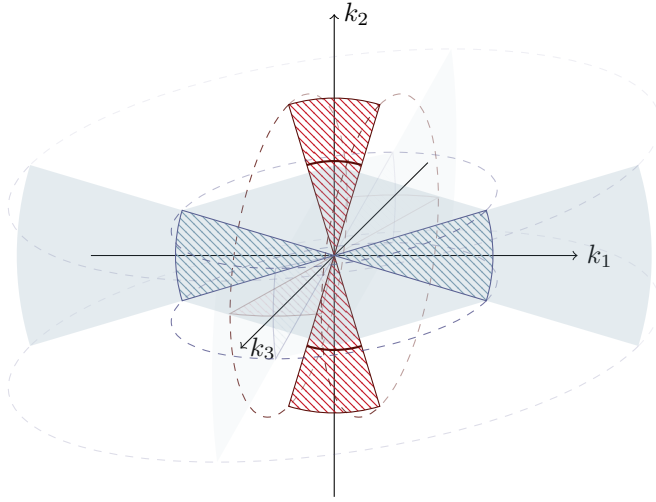
\begin{figure}
\begin{tikzpicture}[scale=0.4]

  \draw[->] (-8,0,0) -- (8,0,0) node[right] {$k_1$};
  \draw[->] (0,-8,0) -- (0,8,0) node[right] {$k_2$};
  \draw[->] (0,0,-8) -- (0,0,8) node[right] {$k_3$};

  % Cone1 around k3+
  \draw[color = HDredDark, canvas is xz plane at y=0, pattern=north west lines, pattern color=HDred, opacity=0.3] (0,0) -- (1.5,5) arc[start angle = 73.3, end angle = 106.7, radius = 5.22] -- (0,0);
  
  % Cone1 around k3-
  \draw[color = HDredDark, canvas is xz plane at y=0, pattern=north west lines, pattern color=HDred, opacity=0.1] (0,0) -- (1.5,-5) arc[start angle = -73.3, end angle = -106.7, radius = 5.22] -- (0,0);

  % Cone1 around k2+
  \draw[color = HDredDark, canvas is xy plane at z=0, pattern=north west lines, pattern color=HDred] (0,0) -- (1.5,5) arc[start angle = 73.3, end angle = 106.7, radius = 5.22] -- (0,0);
  
  % Cone1 around k2-
  \draw[color = HDredDark, canvas is xy plane at z=0, pattern=north west lines, pattern color=HDred] (0,0) -- (1.5,-5) arc[start angle = -73.3, end angle = -106.7, radius = 5.22] -- (0,0);

  % Circles
  \draw[canvas is yz plane at x=1.5, color = HDredDark, opacity=0.8, dashed] (5,0) arc[start angle = 0, end angle = 180, radius = 5];
  \draw[canvas is yz plane at x=-1.5, color = HDredDark, opacity=0.8, dashed] (5,0) arc[start angle = 0, end angle = 180, radius = 5];
  \draw[canvas is yz plane at x=1.5, color = HDredDark, opacity=0.4, dashed] (5,0) arc[start angle = 0, end angle = -180, radius = 5];
  \draw[canvas is yz plane at x=-1.5, color = HDredDark, opacity=0.4, dashed] (5,0) arc[start angle = 0, end angle = -180, radius = 5];

  % Cone2 around k1+
  \draw[color = PurpleNavy, canvas is xy plane at z = 0, pattern=north west lines, pattern color=PewterBlue] (0,0) -- (5,1.5) arc[start angle = 16.7, end angle = -16.7, radius = 5.22] -- (0,0);
  
  % Cone2 around k1-
  \draw[color = PurpleNavy, canvas is xy plane at z = 0, pattern=north west lines, pattern color=PewterBlue] (0,0) -- (-5,1.5) arc[start angle = 163.3, end angle = 196.7, radius = 5.22] -- (0,0);
  
  % Cone2 around k3+
  \draw[color = PurpleNavy, canvas is zy plane at x = 0, pattern=north west lines, pattern color=PewterBlue, opacity=0.3] (0,0) -- (5,1.5) arc[start angle = 16.7, end angle = -16.7, radius = 5.22] -- (0,0);
  
  % Cone2 around k3-
  \draw[color = PurpleNavy, canvas is zy plane at x = 0, pattern=north west lines, pattern color=PewterBlue, opacity=0.1] (0,0) -- (-5,1.5) arc[start angle = 163.3, end angle = 196.7, radius = 5.22] -- (0,0);
 
  % Circles
  \draw[canvas is xz plane at y=1.5, color = PurpleNavy, opacity=0.8, dashed] (5,0) arc[start angle = 0, end angle = 180, radius = 5];
  \draw[canvas is xz plane at y=-1.5, color = PurpleNavy, opacity=0.8, dashed] (5,0) arc[start angle = 0, end angle = 180, radius = 5];
  \draw[canvas is xz plane at y=1.5, color = PurpleNavy, opacity=0.4, dashed] (5,0) arc[start angle = 0, end angle = -180, radius = 5];
  \draw[canvas is xz plane at y=-1.5, color = PurpleNavy, opacity=0.4, dashed] (5,0) arc[start angle = 0, end angle = -180, radius = 5];

  % Cone2 + Cone2 in k1-k2 plane
  % Cone around +
  \fill[canvas is xy plane at z=0, color = PewterBlue, opacity = 0.2] (0,0) -- (10,3) arc[start angle = 16.7, end angle = -16.7, radius = 10.44] -- (0,0);
  % Cone around -
  \fill[canvas is xy plane at z=0, color = PewterBlue, opacity = 0.2] (0,0) -- (-10,3) arc[start angle = 163.3, end angle = 196.7, radius = 10.44] -- (0,0);
  % Upper rhombus
  \fill[canvas is xy plane at z=0, color = PewterBlue, opacity = 0.2] (0,0) -- (5,1.5) -- (0,3) -- (-5,1.5) -- (0,0);
  % Lower rhombus
  \fill[canvas is xy plane at z=0, color = PewterBlue, opacity = 0.2] (0,0) -- (5,-1.5) -- (0,-3) -- (-5,-1.5) -- (0,0);
  
  % Cone2 + Cone2 in k3-k2 plane
  % Cone around +
  \fill[canvas is zy plane at x=0, color = PewterBlue!50, opacity = 0.1] (0,0) -- (10,3) arc[start angle = 16.7, end angle = -16.7, radius = 10.44] -- (0,0);
  % Cone around -
  \fill[canvas is zy plane at x=0, color = PewterBlue!50, opacity = 0.1] (0,0) -- (-10,3) arc[start angle = 163.3, end angle = 196.7, radius = 10.44] -- (0,0);
  % Upper rhombus
  \fill[canvas is zy plane at x=0, color = PewterBlue!50, opacity = 0.1] (0,0) -- (5,1.5) -- (0,3) -- (-5,1.5) -- (0,0);
  % Lower rhombus
  \fill[canvas is zy plane at x=0, color = PewterBlue!50, opacity = 0.1] (0,0) -- (5,-1.5) -- (0,-3) -- (-5,-1.5) -- (0,0);
  
  % Circles
  \draw[canvas is xz plane at y=3, color = PurpleNavy, opacity=0.2, dashed] (10,0) arc[start angle = 0, end angle = 180, radius = 10];
  \draw[canvas is xz plane at y=-3, color = PurpleNavy, opacity=0.2, dashed] (10,0) arc[start angle = 0, end angle = 180, radius = 10];
  \draw[canvas is xz plane at y=3, color = PurpleNavy, opacity=0.1, dashed] (10,0) arc[start angle = 0, end angle = -180, radius = 10];
  \draw[canvas is xz plane at y=-3, color = PurpleNavy, opacity=0.1, dashed] (10,0) arc[start angle = 0, end angle = -180, radius = 10];
  
  % Smaller Cone1 around k2
  \draw[thick,canvas is xy plane at z=0, color = HDredDark] (0.9,3) arc[start angle = 73.3, end angle = 106.7, radius = 3.132];\draw[thick,canvas is xy plane at z=0, color = HDredDark] (0.9,-3) arc[start angle = -73.3, end angle = -106.7, radius = 3.132];
  
  \end{tikzpicture}
 \caption{Illustration of the cones \(C_{1,\mu,\lambda}\) (red) and \(C_{2,\mu,\lambda}\) (blue), and also of \(C_{2,\mu,\lambda}+C_{2,\mu,\lambda}\) in Fourier space.}
\end{figure}

\begin{proof}
We first observe that the Fourier transform of \(h_{i,j}(m_{j,\mu,\lambda}(D)f_j)\) is given by a convolution of the functions $m_{j,\mu,\lambda}(D)f_j$. Hence, the support of \(h_{i,j}(m_{j,\mu,\lambda}(D)f_j)\) is contained in the \(d\)-fold Minkowski-sum of \(C_{j,2\mu,2\lambda}\) with itself.
For \(j=2,3\) it therefore holds that
 \begin{align*}
  \mathcal{F}\big(h_{1,j}(m_{j,\mu,\lambda}(D) f_j)\big)(k) = 0 \quad \text{for } |k_j| > 4 d \mu \lambda.
 \end{align*}
Further we introduce the sets \(K_2 = \{|k_2| > 4 d \mu \lambda\}, K_3 = \{|k_3| > 4 d \mu \lambda\}\) and consider the corresponding Fourier multipliers of the smoothed-out indicator functions  \(\chi_{K_2}(D),\chi_{K_3}(D),\chi_{K_2 \cup K_3}(D)\).
This implies for \(i=2,3\)
\begin{align}
\label{eq:zero}
 \chi_{K_i}(D) h_{1,i}(m_{i,\mu,\lambda}(D) f_i) = 0.
\end{align}
With this we can show that the Fourier mass concentrates in a cone with smaller truncation parameter: Indeed, in Fourier space \(\chi_{K_2 \cup K_3} \leq \chi_{K_2} + \chi_{K_3}\). Thus,
\begin{align}
\label{eq:triangle}
\begin{split}
 \Vert \chi_{K_2 \cup K_3}(D) m_{1,\mu,\lambda}(D) f_1 \Vert_{L^2}^2 
 &\leq \Vert (\chi_{K_2}(D) + \chi_{K_3}(D)) m_{1,\mu,\lambda}(D) f_1 \Vert_{L^2}^2 \\
& \leq  2 \Vert \chi_{K_2}(D) m_{1,\mu,\lambda}(D) f_1 \Vert_{L^2}^2 + 2 \Vert \chi_{K_3}(D) m_{1,\mu,\lambda}(D) f_1 \Vert_{L^2}^2.
\end{split}
\end{align}
Invoking \eqref{eq:zero} together with the bounds from \cref{lem:ResultsRT21}, then implies for \(i=2,3\)
\begin{align*}
 \Vert \chi_{K_i}(D) m_{1,\mu,\lambda}(D) f_1 \Vert_{L^2}^2
 & =  \Vert \chi_{K_i}(D) \big( m_{1,\mu,\lambda}(D) f_1 - h_{1,i}(m_{i,\mu,\lambda}(D)f_i)\big)\Vert_{L^2}^2 \\
 & \leq  \Vert m_{1,\mu,\lambda}(D) f_1 - h_{1,i}(m_{i,\mu,\lambda}(D)f_i)\Vert_{L^2}^2 \\
 & \leq  \frac{C_0}{\gamma^{24 d}} \max\{(\mu^{-2}\delta + \lambda^{-1}\beta)^{1-\gamma},\mu^{-2}\delta + \lambda^{-1}\beta\}.
\end{align*}
Combined with \eqref{eq:triangle} this yields
\begin{align*}
 \Vert \chi_{K_2 \cup K_3}(D) m_{1,\mu,\lambda}(D) f_1 \Vert_{L^2}^2 \leq 4 \frac{C_0}{\gamma^{24 d}} \max\{(\mu^{-2}\delta + \lambda^{-1}\beta)^{1-\gamma},\mu^{-2}\delta + \lambda^{-1}\beta\}.
\end{align*}

We observe that by the choice of the parameters \(C_{1,2\mu,2\lambda} \setminus C_{1,2\mu,2\lambda'} \subset (K_2 \cup K_3) \cap C_{1,2\mu,2\lambda}\), and thus \(|m_{1,\mu,\lambda}(k) - m_{1,\mu,\lambda'}(k)| \leq \chi_{K_2 \cup K_3}(k) m_{1,\mu,\lambda}(k)\). Therefore,
\begin{align*}
 \Vert m_{1,\mu,\lambda}(D) f_1 - m_{1,\mu,\lambda'}(D)f_1\Vert_{L^2}^2 & \leq  \Vert \chi_{K_2 \cup K_3}(D) m_{1,\mu,\lambda}(D)f_1 \Vert_{L^2}^2 \\
& \leq  4 \frac{C_0}{\gamma^{24 d}} \max\{(\mu^{-2}\delta + \lambda^{-1}\beta)^{1-\gamma},\mu^{-2}\delta + \lambda^{-1}\beta\}.
\end{align*}

In conclusion, (for \(C_0 \geq C\))
\begin{align*}
 & \Vert f_1 - m_{1,\mu,\lambda'}(D)f_1 \Vert_{L^2}^2 +\Vert f_2 - m_{2,\mu,\lambda}(D)f_2 \Vert_{L^2}^2+\Vert f_3 - m_{3,\mu,\lambda}(D)f_3 \Vert_{L^2}^2 \\
 &\quad \leq  \Vert f_2 - m_{2,\mu,\lambda}(D)f_2 \Vert_{L^2}^2+\Vert f_3 - m_{3,\mu,\lambda}(D)f_3 \Vert_{L^2}^2 \\
  & \qquad +2 \Vert f_1 - m_{1,\mu,\lambda}(D)f_1 \Vert_{L^2}^2 + 2 \Vert m_{1,\mu,\lambda}(D) f_1 - m_{1,\mu,\lambda'}(D)f_1\Vert_{L^2}^2 \\
 & \quad \leq  2 C (\mu^{-2}\delta + \lambda^{-1} \beta) +8 \frac{C_0}{\gamma^{24 d}} \max\{(\mu^{-2}\delta + \lambda^{-1}\beta)^{1-\gamma},\mu^{-2}\delta + \lambda^{-1}\beta\} \\
   & \quad \leq 10 \frac{C_0}{\gamma^{24 d}} \max\{(\mu^{-2}\delta + \lambda^{-1}\beta)^{1-\gamma},\mu^{-2}\delta+\lambda^{-1}\beta\}.
\end{align*}
\end{proof}

Let us stress that the decomposition into \(K_2\) and \(K_3\) is in analogy to the two comparisons from \cref{prop:rigid_exact} in order to show that \(u_{11}\) is constant.
The set \(K_2\) resembles the comparison of \(u_{11}\) and \(u_{22}\) to show that \(u_{11}\) is constant in \(x_2\) and the set \(K_3\) resembles the comparison of \(u_{11}\) and \(u_{33}\) to show that $u_{11}$ does not depend on $x_3$.

Applying the previous result for all three directions simultaneously then yields the following corollary which will serve as the induction basis for the subsequent inductive bootstrap argument.

\begin{cor}[Induction Basis] \label{lem:InductionBasis}
Let \(\beta,\delta,\mu,\lambda>0\) and let \(C_{i,\mu,\lambda}\) be the cones in \eqref{eq:T3Cones} with corresponding multiplier \(m_{i,\mu,\lambda}(D)\), cf. \eqref{eq:CutOffMult}, for \(i=1,2,3\).
Further let \(f_i,h_{j,i}\) be functions as in \cref{lem:ResultsRT21}.
Let $C_0>0$, $\gamma \in (0,1)$ and $d\geq 0$ be the constants from \cref{lem:ResultsRT21}.  For \(\lambda_0 = \lambda > 0, \ \mu \in (0,\frac{1}{2\sqrt{2d^2+1}}), \ \lambda_1 \in (\frac{4d\mu\lambda_0}{\sqrt{2}\sqrt{1-4\mu^2}},\lambda_0)\) it holds that
 \begin{align*}
 \begin{split}
  \Vert f_1 - m_{1,\mu,\lambda_1}(D) f_1 \Vert_{L^2}^2 + \Vert f_2 - m_{2,\mu,\lambda_1}(D) f_2 \Vert_{L^2}^2 + \Vert f_3 - m_{3,\mu,\lambda_1}(D) f_3 \Vert_{L^2}^2 \\
  \leq \frac{30 C_0}{\gamma^{24 d}} \max\{(\mu^{-2}\delta + \lambda^{-1}\beta)^{1-\gamma},\mu^{-2}\delta + \lambda^{-1}\beta\}.
  \end{split}
 \end{align*}
\end{cor}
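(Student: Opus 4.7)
The plan is to apply \cref{lem:FirstComparison} three times, once for each distinguished index $i \in \{1,2,3\}$, and then to add the resulting three inequalities. The statement of \cref{lem:FirstComparison} is formulated so that index $1$ plays a distinguished role: only the cone in the $e_1$-direction has its truncation radius $\lambda$ reduced to $\lambda'$, while the cones in the other two directions are kept at radius $\lambda$. However, the hypotheses of that lemma (namely surface energy and $\dot{H}^{-1}$ control on all three diagonal components $f_i$, together with the existence of nonlinear polynomials $h_{j,i}$ with $h_{j,i}(f_i) = f_j$ for every pair $i \neq j$) are fully symmetric under permutations of the indices. Thus the conclusion of \cref{lem:FirstComparison} also holds with the roles of $1$ and $i$ interchanged for $i \in \{2,3\}$.

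Concretely, for each fixed $i \in \{1,2,3\}$, applying \cref{lem:FirstComparison} with index $i$ playing the role of the distinguished index and with $\lambda_1$ in place of $\lambda'$ yields
\begin{align*}
 \|f_i - m_{i,\mu,\lambda_1}(D) f_i\|_{L^2}^2 + \sum_{j \neq i} \|f_j - m_{j,\mu,\lambda}(D) f_j\|_{L^2}^2 \leq 10 \frac{C_0}{\gamma^{24 d}} \max\{(\mu^{-2}\delta + \lambda^{-1}\beta)^{1-\gamma}, \mu^{-2}\delta + \lambda^{-1}\beta\}.
\end{align*}
Since $\lambda_1 < \lambda_0 = \lambda$, the cones satisfy $C_{j,\mu,\lambda_1} \subset C_{j,\mu,\lambda}$, so the terms $\|f_j - m_{j,\mu,\lambda}(D)f_j\|_{L^2}^2$ appearing above are non-negative and may simply be dropped from the left-hand side.

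Summing the resulting three inequalities over $i \in \{1,2,3\}$ gives the claim: the left-hand side contains precisely $\sum_{i=1}^3 \|f_i - m_{i,\mu,\lambda_1}(D) f_i\|_{L^2}^2$ (each term appearing once, coming from its corresponding distinguished choice of $i$), while the right-hand side is bounded by $3 \cdot 10 \frac{C_0}{\gamma^{24 d}} \max\{\ldots\} = \frac{30 C_0}{\gamma^{24 d}} \max\{\ldots\}$. The only points that require care are to verify that the constraint $\lambda_1 \in (\tfrac{4d\mu\lambda_0}{\sqrt{2}\sqrt{1-4\mu^2}}, \lambda_0)$ is exactly the one allowed in \cref{lem:FirstComparison} for the reduction step (which it is, since that constraint is intrinsic to the comparison argument and does not depend on which index is distinguished), and to observe that no bootstrap obstacle arises here because we do not iterate: the three applications are independent uses of the same lemma with permuted indices.
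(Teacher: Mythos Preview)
Your proposal is correct and matches the paper's approach: the paper simply states that the corollary follows by ``applying the previous result for all three directions simultaneously,'' which is exactly what you do by permuting the distinguished index in \cref{lem:FirstComparison}, dropping the nonnegative non-distinguished terms, and summing to obtain the factor $3 \cdot 10 = 30$.
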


With \cref{lem:InductionBasis} in hand, we now iteratively further decrease the Fourier supports.
To this end, we will invoke the commutator bounds from \cref{lem:ResultsRT21}.

\begin{lem}[Iteration process]
\label{lem:iterate}
Let \(\beta,\delta>0\) and let \(C_{i,\mu,\lambda}\) be the cones in \eqref{eq:T3Cones} with corresponding multiplier \(m_{i,\mu,\lambda}(D)\), cf. \eqref{eq:CutOffMult}, for \(i=1,2,3\).
Further let \( \mu \in (0, 
\frac{1}{2 \sqrt{2d^2+1}})\), \(\lambda >0\), and let \(f_i,h_{j,i}\) be functions as in \cref{lem:ResultsRT21}.
Let $C_0>0$, $\gamma \in (0,1)$ and $d\geq 0$ be the constants from \cref{lem:ResultsRT21}.
Let \(\lambda_k > 0\) be a sequence for \(k \in \N\) with \(\lambda_0 = \lambda\) and \(\lambda_k \in (\frac{4d\mu \lambda_{k-1}}{\sqrt{2}\sqrt{1-4\mu^2}},\lambda_{k-1})\).
It then holds for every \(k \in \N\setminus\{0\}\)
\begin{align*}
 \Vert f_1 - m_{1,\mu,\lambda_k}(D)f_1 \Vert_{L^2}^2 + \Vert f_2 - m_{2,\mu,\lambda_k}(D)f_2 \Vert_{L^2}^2 + \Vert f_3 - m_{3,\mu,\lambda_k}(D)f_3 \Vert_{L^2}^2 \\
 \leq \left( \frac{30 C_0}{\gamma^{24 d}}\right)^k \max\{(\mu^{-2}\delta +\lambda^{-1}\beta)^{(1-\gamma)^k},\mu^{-2}\delta + \lambda^{-1}\beta\}.
\end{align*}

\end{lem}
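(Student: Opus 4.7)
The plan is to proceed by induction on $k \in \N$. The base case $k=1$ is exactly \cref{lem:InductionBasis}, since $\lambda_0=\lambda$. For the inductive step, I would mimic the proof of \cref{lem:FirstComparison} and \cref{lem:InductionBasis} one more time, but now at the scale $\lambda_{k-1}$ and with the inductive bound
\begin{align*}
 E_{k-1} := \sum_{\ell=1}^3 \Vert f_\ell - m_{\ell,\mu,\lambda_{k-1}}(D)f_\ell \Vert_{L^2}^2 \leq \Big(\tfrac{30 C_0}{\gamma^{24d}}\Big)^{k-1} \max\{(\mu^{-2}\delta+\lambda^{-1}\beta)^{(1-\gamma)^{k-1}},\mu^{-2}\delta+\lambda^{-1}\beta\}
\end{align*}
taking over the role that the first estimate of \cref{lem:ResultsRT21} played in the base case.

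More concretely, fix $i \in \{1,2,3\}$ and let $\{j_1,j_2\} = \{1,2,3\}\setminus\{i\}$. For $j \in \{j_1,j_2\}$ I would apply the second (nonlinear commutator) estimate of \cref{lem:ResultsRT21} at scale $\lambda_{k-1}$ in place of $\lambda$, using only that $h_{i,j}(f_j) = f_i$, to obtain
\begin{align*}
 \Vert f_i - h_{i,j}(m_{j,\mu,\lambda_{k-1}}(D)f_j)\Vert_{L^2} \leq \tfrac{C'}{\gamma^{12d}} \Vert f_j - m_{j,\mu,\lambda_{k-1}}(D)f_j\Vert_{L^2}^{1-\gamma}.
\end{align*}
The triangle inequality combined with the inductive hypothesis then yields the commutator bound
\begin{align*}
 \Vert h_{i,j}(m_{j,\mu,\lambda_{k-1}}(D)f_j) - m_{i,\mu,\lambda_{k-1}}(D)f_i\Vert_{L^2}^2 \leq \tfrac{2C'^2}{\gamma^{24d}} E_{k-1}^{1-\gamma} + 2E_{k-1} \leq \tfrac{2C_0}{\gamma^{24d}} \max\{E_{k-1}^{1-\gamma},E_{k-1}\},
\end{align*}
using the choice $C_0 > 2C+2C'^2+3$. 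From here the proof proceeds verbatim as in \cref{lem:FirstComparison}: the convolution support of $h_{i,j}(m_{j,\mu,\lambda_{k-1}}(D)f_j)$ lies in $\{|k_j| \leq 4d\mu\lambda_{k-1}\}$, so the multiplier $\chi_{K_j}(D)$ with $K_j = \{|k_j| > 4d\mu\lambda_{k-1}\}$ annihilates it. Summing the two resulting bounds over $j\in\{j_1,j_2\}$ via $\chi_{K_{j_1}\cup K_{j_2}} \leq \chi_{K_{j_1}} + \chi_{K_{j_2}}$ and exploiting the inclusion $C_{i,2\mu,2\lambda_{k-1}} \setminus C_{i,2\mu,2\lambda_k} \subset K_{j_1}\cup K_{j_2}$ (which dictates the admissible range for $\lambda_k$) gives a bound on $\Vert m_{i,\mu,\lambda_{k-1}}(D)f_i - m_{i,\mu,\lambda_k}(D)f_i\Vert_{L^2}^2$. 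A final triangle inequality with the inductive hypothesis, summed over $i=1,2,3$, then produces the one-step recursion
\begin{align*}
 E_k \leq \tfrac{30 C_0}{\gamma^{24d}} \max\{E_{k-1}^{1-\gamma},E_{k-1}\}.
\end{align*}

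The remaining task is the purely algebraic bookkeeping of propagating this recursion into the claimed closed form. Writing $A := 30 C_0/\gamma^{24d} \geq 1$ and $X := \mu^{-2}\delta+\lambda^{-1}\beta$, I would verify by a short case analysis (on whether $A^{k-1} \max\{X^{(1-\gamma)^{k-1}},X\} \geq 1$ or $<1$, and whether $X \geq 1$ or $<1$) that $E_k \leq A^k \max\{X^{(1-\gamma)^k},X\}$ follows from $E_{k-1} \leq A^{k-1}\max\{X^{(1-\gamma)^{k-1}},X\}$. This is the main (though rather routine) obstacle: all the Fourier-analytic work is a direct adaptation of the already-given argument for the base case, and the only genuinely new issue is tracking how the outer power $A^k$ and the doubly-iterated exponent $(1-\gamma)^k$ coexist inside the $\max$ across all parameter regimes.
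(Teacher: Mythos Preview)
Your approach is correct and follows essentially the same strategy as the paper's proof: induction on $k$ with base case \cref{lem:InductionBasis}, and an inductive step that replays the Fourier-support argument of \cref{lem:FirstComparison} at scale $\lambda_{k-1}$, using the second estimate of \cref{lem:ResultsRT21} together with the inductive hypothesis in place of the first estimate. The only organisational difference is that you first isolate a clean one-step recursion $E_k \leq \frac{30C_0}{\gamma^{24d}}\max\{E_{k-1}^{1-\gamma},E_{k-1}\}$ and then close it algebraically, whereas the paper plugs the explicit inductive bound directly into the chain of inequalities and simplifies in place; your case analysis for the closure (using $A\geq 1$ and the monotonicity of $t\mapsto X^t$ according to whether $X\lessgtr 1$) is correct and does exactly what the paper's direct manipulation achieves.
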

\begin{proof}
 We prove the statement by induction on \(k\) with the induction basis given by \cref{lem:InductionBasis}.
 Assume that for some arbitrary but fixed \(k \in \N\) it holds
 \begin{align}
 \label{eq:induc_hyp}
  \begin{split}
  \Vert f_1 - m_{1,\mu,\lambda_k}(D) f_1 \Vert_{L^2}^2 & + \Vert f_2 - m_{2,\mu,\lambda_k}(D) f_2 \Vert_{L^2}^2 + \Vert f_3 - m_{3,\mu,\lambda_{k}}(D) f_3 \Vert_{L^2}^2 \\
  & \leq \left( \frac{30 C_0}{\gamma^{24 d}}\right)^k \max\{(\mu^{-2}\delta +\lambda^{-1}\beta)^{(1-\gamma)^k},\mu^{-2}\delta + \lambda^{-1}\beta\}.
  \end{split}
 \end{align}
Now for the induction step \(k \mapsto k+1\) we carry out the same argument as above to show
\begin{align}
\label{eq:induc_1}
\begin{split}
 \Vert f_1 - m_{1,\mu,\lambda_{k+1}}(D) f_1 \Vert_{L^2}^2 + \Vert f_2 - m_{2,\mu,\lambda_{k+1}}(D) f_2 \Vert_{L^2}^2 + \Vert f_3 - m_{3,\mu,\lambda_{k+1}}(D) f_3 \Vert_{L^2}^2 \\
 \leq \left( \frac{30 C_0}{\gamma^{24 d}}\right)^{k+1} \max\{(\mu^{-2}\delta +\lambda^{-1}\beta)^{(1-\gamma)^{k+1}},\mu^{-2}\delta + \lambda^{-1}\beta\}.
 \end{split}
\end{align}

We argue as in the induction basis and present the calculations only for the first term on the left hand side of \eqref{eq:induc_1}. By the triangle inequality,
\begin{align*}
 \Vert f_1 - m_{1,\mu,\lambda_{k+1}}(D) f_1 \Vert_{L^2}^2 \leq 2 \Vert f_1 - m_{1,\mu,\lambda_{k}}(D) f_1 \Vert_{L^2}^2 + 2 \Vert  m_{1,\mu,\lambda_k} (D) f_1 - m_{1,\mu,\lambda_{k+1}}(D) f_1 \Vert_{L^2}^2.
\end{align*}
The first contribution is already of the desired form. It thus remains to consider the second contribution. For this we consider an analogous argument as before:
 Let \(K_2^k := \{|k_2| > 4 d \mu \lambda_k\}, K_3^k := \{|k_3| > 4 d \mu \lambda_k\}\).
Then, since $ \chi_{K_j^k}(D) h_{1,j}(m_{j,\mu,\lambda_k}(D) f_j) = 0$ on $K_j^k$,
 \begin{align}
  \Vert  m_{1,\mu,\lambda_k} (D) f_1 & - m_{1,\mu,\lambda_{k+1}}(D) f_1 \Vert_{L^2}^2
   \leq  \Vert \chi_{K_2^k \cup K_3^k}(D) m_{1,\mu,\lambda_k}(D) f_1 \Vert_{L^2}^2 \nonumber \\
 & \leq  2 \sum_{j=2}^3 \Vert \chi_{K_j^k}(D) (m_{1,\mu,\lambda_k}(D) f_1 - h_{1,j}(m_{j,\mu,\lambda_k}(D) f_j)) \Vert_{L^2}^2 \nonumber \\
 & \leq  2 \sum_{j=2}^3 \Vert m_{1,\mu,\lambda_k}(D) f_1 - h_{1,j}(m_{j,\mu,\lambda_k}(D) f_j) \Vert_{L^2}^2 \nonumber \\
 & \leq  4 \sum\limits_{j=2}^{3} [ \Vert m_{1,\mu,\lambda_k}(D) f_1 - f_1 \Vert_{L^2}^2 + \Vert f_1 -  h_{1,j}(m_{j,\mu,\lambda_k}(D) f_j) \Vert_{L^2}^2] \nonumber \\
 & =  8 \Vert m_{1,\mu,\lambda_k}(D) f_1 - f_1 \Vert_{L^2}^2 + 4 \sum\limits_{j=2}^{3} \Vert f_1 -  h_{1,j}(m_{j,\mu,\lambda_k}(D) f_j) \Vert_{L^2}^2. \label{eq:induc_2}
 \end{align}

Again for the second right hand side contribution in \eqref{eq:induc_2} we use \cref{lem:ResultsRT21}:
\begin{align*}
 \Vert f_1 -  h_{1,j}(m_{j,\mu,\lambda_k}(D) f_j) \Vert_{L^2}^2 \leq \frac{C'^2}{\gamma^{24 d}} ( \Vert f_j - m_{j,\mu,\lambda_k}(D) f_j \Vert_{L^2}^2)^{1-\gamma}.
\end{align*}
Using the inductive hypothesis \eqref{eq:induc_hyp}, overall, we arrive at the following upper bound
\begin{align*}
  \Vert f_1 - m_{1,\mu,\lambda_{k+1}}(D) f_1 \Vert_{L^2}^2
  & \leq  2 \Vert f_1 - m_{1,\mu,\lambda_{k}}(D) f_1 \Vert_{L^2}^2 + 16 \Vert m_{1,\mu,\lambda_k}(D) f_1 - f_1 \Vert_{L^2}^2 \\
  & \quad + 8 \frac{C'^2}{\gamma^{24 d}} \sum_j ( \Vert f_j - m_{j,\mu,\lambda_k}(D) f_j \Vert_{L^2}^2)^{1-\gamma} \\
 & \leq 18 \Vert f_1 - m_{1,\mu,\lambda_{k}}(D) f_1 \Vert_{L^2}^2 \\
 & \quad + 16 \frac{C'^2}{\gamma^{24 d}} \big((\frac{30 C_0}{\gamma^{24 d}})^k \max\{(\mu^{-2}\delta + \lambda^{-1}\beta)^{(1-\gamma)^k},\mu^{-2}\delta + \lambda^{-1} \beta\}\big)^{1-\gamma}.
\end{align*}

Arguing symmetrically for \(f_2\) and \(f_3\) then yields:
\begin{align*}
& \Vert f_1 - m_{1,\mu,\lambda_{k+1}}(D) f_1 \Vert_{L^2}^2   + \Vert f_2 - m_{2,\mu,\lambda_{k+1}}(D) f_2 \Vert_{L^2}^2 + \Vert f_3 - m_{3,\mu,\lambda_{k+1}}(D) f_3 \Vert_{L^2}^2 \\
& \quad \leq  18 (\Vert f_1 - m_{1,\mu,\lambda_k}(D) f_1 \Vert_{L^2}^2 + \Vert f_2 - m_{2,\mu,\lambda_k}(D) f_2 \Vert_{L^2}^2 + \Vert f_3 - m_{3,\mu,\lambda_{k}}(D) f_3 \Vert_{L^2}^2) \\
 & \quad \quad + 48 \frac{C'^2}{\gamma^{24 d}} \big((\frac{30 C_0}{\gamma^{24 d}})^k \max\{(\mu^{-2}\delta + \lambda^{-1}\beta)^{(1-\gamma)^k},\mu^{-2}\delta + \lambda^{-1} \beta\}\big)^{1-\gamma}\\
\quad & \leq  18 \left( \frac{30 C_0}{\gamma^{24 d}}\right)^k \max\{(\mu^{-2}\delta +\lambda^{-1}\beta)^{(1-\gamma)^k},\mu^{-2}\delta + \lambda^{-1}\beta\} \\
 & \qquad + 48 \frac{C'^2}{\gamma^{24 d}} \big((\frac{30 C_0}{\gamma^{24 d}})^k \max\{(\mu^{-2}\delta + \lambda^{-1}\beta)^{(1-\gamma)^k},\mu^{-2}\delta + \lambda^{-1} \beta\}\big)^{1-\gamma}.
\end{align*}

Using that \(2C'^2 \leq C_0, 1-\gamma \in (0,1)\) and that \(\frac{C_0}{\gamma^{24 d}} \geq 3\),  leads to
\begin{align*}
 \Vert f_1 - m_{1,\mu,\lambda_{k+1}}(D) f_1 \Vert_{L^2}^2  & + \Vert f_2 - m_{2,\mu,\lambda_{k+1}}(D) f_2 \Vert_{L^2}^2 + \Vert f_3 - m_{3,\mu,\lambda_{k+1}}(D) f_3 \Vert_{L^2}^2 \\
 & \leq  18 \left( \frac{30 C_0}{\gamma^{24 d}}\right)^k \max\{(\mu^{-2}\delta +\lambda^{-1}\beta)^{(1-\gamma)^k},\mu^{-2}\delta + \lambda^{-1}\beta\} \\
 & \quad + 24 \frac{C_0}{\gamma^{24 d}} (\frac{30 C_0}{\gamma^{24 d}})^k \max\{(\mu^{-2}\delta + \lambda^{-1}\beta)^{(1-\gamma)^{k+1}},(\mu^{-2}\delta + \lambda^{-1} \beta)^{1-\gamma}\}\big) \\
& \leq  \frac{6C_0}{\gamma^{24 d}} (\frac{30 C_0}{\gamma^{24 d}})^{k} \max\{(\mu^{-2}\delta +\lambda^{-1}\beta)^{(1-\gamma)^{k+1}},\mu^{-2}\delta + \lambda^{-1}\beta\} \\
 & \quad + \frac{24 C_0}{\gamma^{24 d}}  (\frac{30 C_0}{\gamma^{24 d}})^{k} \max\{(\mu^{-2}\delta +\lambda^{-1}\beta)^{(1-\gamma)^{k+1}},\mu^{-2}\delta + \lambda^{-1}\beta\} \\
& =  (\frac{30 C_0}{\gamma^{24 d}})^{k+1} \max\{(\mu^{-2}\delta +\lambda^{-1}\beta)^{(1-\gamma)^{k+1}},\mu^{-2}\delta + \lambda^{-1}\beta\}.
\end{align*}
This concludes the proof.
\end{proof}

Now with the inductive procedure of reducing regions of Fourier space concentration in hand, we turn to the proof of the lower bound in \cref{thm:scaling_T3} and the argument for \cref{prop:rig_est}.

\begin{proof}[Proof of the lower bound in \cref{thm:scaling_T3} and proof of \cref{prop:rig_est}]

We argue in two steps, first fixing the free parameters and then exploiting the boundary conditions.

\emph{Step 1: Choice of parameters and proof of \cref{prop:rig_est}.}
We seek to invoke \cref{lem:iterate} expressing the bounds in terms of our energies, i.e. setting \(\delta = E_{el}^{per}(\chi;F):=\inf_{u \in \mathcal{D}_F} E_{el}^{per}(u,\chi), \beta = E_{surf}^{per}(\chi)\), and \(\lambda^{-1} = \mu^{-2} \epsilon\). Moreover, we use the notation
\begin{align}
\label{eq:en_total_periodic}
E_{\epsilon}^{per}(\chi;F):= E_{el}^{per}(\chi;F) + \epsilon E_{surf}^{per}(\chi).
\end{align}
By virtue of \cref{lem:FourierElastic} we obtain that for \(f_i := \chi_{i,i}\) indeed \(\sum_{j=1}^{3} \Vert \p_i f_i \Vert_{\dot{H}^{-1}}^2 \leq \delta\). It follows directly that also \(\sum_{i=1}^{3} \Vert \nabla f_i \Vert_{TV} \leq \beta\) and therefore the conditions in \cref{lem:ResultsRT21} are fulfilled. As a consequence, the above iteration in \cref{lem:iterate} is applicable.
With this in mind, we choose $\mu = \epsilon^{\alpha}$ for some $\alpha>0$ to be specified. Therefore,
\begin{align*}
 \lambda = \lambda_0 = \epsilon^{2\alpha-1}
\end{align*}
and thus since without loss of generality \(E_{\epsilon}^{per}(\chi;F) := \inf_{u \in \mathcal{D}_F} E_{el}^{per}(u,\chi) + \epsilon E_{surf}^{per}(\chi) \leq 1\) (having the upper bound from \cref{prop:upper} in mind), we deduce
\begin{align} \label{eq:ParameterOptimization}
 \sum_{j=1}^3 \Vert f_j - m_{j,\mu,\lambda_k}(D)f_j \Vert_{L^2}^2 \leq (\frac{30 C_0}{\gamma^{24d}})^{k} \epsilon^{-2\alpha} E_{\epsilon}^{per}(\chi;F)^{(1-\gamma)^k}.
\end{align}
We further choose
\begin{align*}
 \lambda_k = (2\sqrt{2d^2+1}\mu)^k \lambda_0 = M^k \epsilon^{(2+k)\alpha-1},
\end{align*}
where \(M=M(d) = 2\sqrt{2d^2+1} > 2\). This is admissible in the sense of the assumptions in \cref{lem:FirstComparison} since \(\frac{\lambda_k}{\lambda_{k-1}} = 2\sqrt{2d^2+1}\mu \in (\frac{1}{\sqrt{2}}\frac{4d\mu}{\sqrt{1-4\mu^2}},1)\).

Next, for \(\alpha = \alpha(\epsilon) \in (0,1)\) and \(\epsilon >0\) we choose \(k \in \N\) to be given by
\begin{align*}
 k := \left\lceil \frac{(1-2\alpha)|\log \epsilon| + \log 2}{\alpha |\log \epsilon| - \log M} \right\rceil \leq \left\lceil \frac{1+\frac{\log M}{|\log \epsilon|}}{\alpha - \frac{\log M}{|\log \epsilon|}} \right\rceil.
\end{align*}
This ensures that \(\lambda_k \leq \frac{1}{2}\).
In what follows, we will choose the parameters $\epsilon, \alpha$ such that \(\frac{\log M}{|\log \epsilon|}\leq \frac{\alpha}{2}\) which implies \(k \leq \frac{4}{\alpha}\). 
Exploiting the discrete Fourier transform, then yields
\begin{align*}
  \Vert f_j - m_{j,\mu,\lambda_k}(D)f_j \Vert_{L^2}^2 =  \sum_{\xi \in \Z^3 \setminus \{0\}} | \hat{f}_j(\xi)|^2 = \Vert f_j -  \langle f_j \rangle \Vert_{L^2}^2,
\end{align*}
and hence, by \eqref{eq:ParameterOptimization} results in the estimate
\begin{align*}
 \sum_{j=1}^3 \Vert f_j - \langle f_j \rangle \Vert_{L^2}^2 \leq \big(\frac{30 C_0}{\gamma^{24d}}\big)^{\frac{4}{\alpha}} \epsilon^{-2\alpha} E_{\epsilon}^{per}(\chi;F)^{(1-\gamma)^{\frac{4}{\alpha}}}.
\end{align*}

Using that \((1-\gamma)^{\frac{4}{\alpha}} \geq 1 - \frac{4}{\alpha}\gamma\), we set $\gamma := \frac{\alpha}{8} \in (0,1)$ which leads to the bound
\begin{align*}
 \sum_{j=1}^3 \Vert f_j -  \langle f_j \rangle \Vert_{L^2}^2 \leq \big( \frac{30C_0 C^{24d}}{(1-q)^{24d}} \big)^{\frac{C}{\alpha}} \alpha^{-\frac{24dC}{\alpha}} \epsilon^{-2\alpha} E_{\epsilon}^{per}(\chi;F)^{\frac{1}{2}}.
\end{align*}

Next, we fix the parameter $\alpha>0$: Observing that for any $\nu>0$ there exists $\alpha_0>0$ such that for $\alpha \in (0,\alpha_0)$ it holds that
\begin{align}
\label{eq:equiv}
 \alpha^{-\frac{24dC}{\alpha}} = \exp\big(24dC \log(\alpha^{-1}) \alpha^{-1}\big) \leq \exp\big(\frac{24dC}{\nu e} \alpha^{-1-\nu}\big) = \exp(C(\nu) \alpha^{-1-\nu}),
\end{align}
we choose \(\alpha = |\log\epsilon|^{-\frac{1}{2+\nu}}\) for all \(\epsilon \in (0,\epsilon_0)\) with $\epsilon_0>0$ still to be chosen. In particular, for $\epsilon_0>0$ sufficiently small, such that $|\log \epsilon|^{\frac{1}{2}} \geq 2 \log M$, \eqref{eq:equiv} holds and also  \(\frac{\log M}{|\log \epsilon|}\leq \frac{\alpha}{2}\) holds as required above.

As a consequence, for \(\nu':=\frac{\nu}{4+2\nu} \in (0,\frac{1}{2})\) we arrive at
\begin{align*}
  \sum_{j=1}^3 \Vert f_j -  \langle f_j \rangle \Vert_{L^2}^2 \leq \exp(c_{\nu'} |\log \epsilon|^{\frac{1}{2}+\nu'}) E_{\epsilon}^{per}(\chi;F)^{\frac{1}{2}} .
\end{align*}

\emph{Step 2: Conclusion.}
In order to conclude the estimate, we derive a lower bound for \(\sum_{j=1}^3 \Vert f_j -  \langle f_j \rangle \Vert_{L^2}^2\).
For this we recall that \(f_j = \chi_{j,j}\) is the \(j\)-th diagonal entry of the phase indicator \(\chi\) and hence \(\sum_{j=1}^3 \Vert f_j -  \langle f_j \rangle \Vert_{L^2}^2 = \Vert \chi -  \langle \chi \rangle \Vert_{L^2}^2\).
Thus, by the mean value condition in $\mathcal{D}_F^{per}$,
\begin{align*}
 | \langle \chi \rangle -F|^2 = | \langle \chi \rangle -\langle u \rangle|^2 \leq \int_{\T^3} |u-\chi|^2 dx \leq E_{el}^{per}(u,\chi),
\end{align*}
and, furthermore, as the left hand side is independent of \(u\),
\begin{align*}
 | \langle \chi \rangle -F|^2 \leq E_{el}^{per}(\chi;F).
\end{align*}

Overall this implies for the total energy \(E_\epsilon^{per}(\chi;F) = E_{el}^{per}(\chi;F) + \epsilon E_{surf}^{per}(\chi)\)
\begin{align*}
 \dist^2(F,\K) & \leq \int_{\T^3} |\chi-F|^2 dx \leq \int_{\T^3} |\chi- \langle \chi \rangle |^2 dx + \int_{\T^3} | \langle \chi \rangle -F|^2 dx \\
 & \leq \exp(c_{\nu'} |\log \epsilon|^{\frac{1}{2}+\nu'}) E_\epsilon^{per}(\chi;F)^{\frac{1}{2}} + E_{el}^{per}(\chi;F)\\
 & \leq \exp(c_{\nu'} |\log \epsilon|^{\frac{1}{2}+\nu'}) E_\epsilon^{per}(\chi;F)^{\frac{1}{2}} + E_{\epsilon}^{per}(\chi;F)^{\frac{1}{2}} \\
 & \leq 2 \exp(c_{\nu'} |\log \epsilon|^{\frac{1}{2}+\nu'}) E_\epsilon^{per}(\chi;F)^{\frac{1}{2}}.
\end{align*}

Finally, solving for \(E_\epsilon^{per}(\chi;F)\) shows the desired estimate
\begin{align*}
 E_\epsilon^{per}(\chi;F) \geq 2^{-2} \exp\big(-2c_{\nu'} |\log \epsilon|^{\frac{1}{2}+\nu'}\big) \dist^4(F,\K).
\end{align*}
The desired claim follows by an application of \cref{lem:per}.
\end{proof}

\appendix

\section{Branching, upper bound for the two-state problem for the divergence operator}
\label{sec:construc}

We complement our lower bounds for the compatible two-well problem from \cref{sec:two-state} by an upper bound in the case of the divergence operator acting on matrix fields as introduced in \cref{ex:div}. For simplicity, we only consider square matrices, i.e. \(m = d\). For earlier, closely related, three-dimensional constructions in the context of compliance minimization problems we refer to \cite{PW21}.
While the $\epsilon^{\frac{2}{3}}$ construction is by now rather ``standard'' \cite{KM92,KM94,CC15,OV10}, our argument does provide a slightly different perspective, in that, in arbitrary dimension, we can ensure boundary conditions on \emph{all} faces of the domain $\Omega=[0,1]^d$ (see the upper bound construction for \cite[Theorem 3]{RT22} for a similar construction for the gradient).

\begin{figure}
 \centering
\begin{tikzpicture}[thick, scale = 0.5]

\draw[fill = red!20] (0,0) -- (1.5,0) -- (1.5,7) -- (0.75,7) -- (0.75,10.5) -- (0,10.5) -- cycle;
  \draw[fill = red!20] (2.5,7) -- (2.5,10.5) -- (3.25,10.5) -- (3.25,7) -- cycle;
  \draw[fill = blue!20] (1.5,0) -- (1.5,7) -- (2,10.5) -- (2.5,10.5) -- (2.5,7) -- cycle;
  \draw[fill = blue!20] (0.75,7) -- (0.75,10.5) -- (1.25,10.5) -- cycle;
  \draw[fill = blue!20] (3.25,7) -- (3.25,10.5) -- (3.75,10.5) -- cycle;
  \draw[fill = blue!20] (3,0) -- (4.5,10.5) -- (5,10.5) -- (5,0) -- cycle;
  \draw[fill = orange!20] (0.75,7) -- (1.25,10.5) -- (2,10.5) -- (1.5,7) -- cycle;
  \draw[fill = orange!20] (1.5,0) -- (2.5,7) -- (3.25,7) -- (3.75,10.5) -- (4.5,10.5) -- (3,0) -- cycle;

 \draw (0,0) -- (5,0) -- (5,7) -- (0,7) -- (0,0);
 \draw[dashed] (2.5,0) -- (2.5,7);
 \draw (1.5,0) -- (1.5,7);
 \draw (1.5,0) -- (2.5,7);
 \draw (3,0) -- (4,7);

 \draw (0,7) -- (0,10.5) -- (5,10.5) -- (5,7);
 \draw (2.5,7) -- (2.5,10.5);
 \draw[dashed] (1.25,7) -- (1.25,10.5);
 \draw[dashed] (3.75,7) -- (3.75,10.5);
 \draw (0.75,7) -- (0.75,10.5);
 \draw (0.75,7) -- (1.25,10.5);
 \draw (3.25,7) -- (3.25,10.5);
 \draw (3.25,7) -- (3.75,10.5);
 \draw (1.5,7) -- (2,10.5);
 \draw (4,7) -- (4.5,10.5);

 \draw (-0.2,0) -- (-0.2,7);
 \draw (-0.25,0) -- (-0.15,0);
 \draw (-0.25,7) -- (-0.15,7);
 \node[left] at (-0.25,3.5) {$h$};

 \draw (0,-0.2) -- (5,-0.2);
 \draw (0,-0.25) -- (0,-0.15);
 \draw (5,-0.25) -- (5,-0.15);
 \node[below] at (2.5,-0.2) {$l$};

 \node at (0.75,3.5) {$A$};
 \node at (2.75,3.5) {$\tilde{A}$};
 \node at (4.25,3.5) {$B$};

 \draw[->] (3.25,1.75) -- (4.25,1.607) node[below] {$n$};
\end{tikzpicture}
\caption{Self-similar construction in \cref{prop:BranchingUpper}.}
\end{figure}

In deducing the upper bound for the divergence operator, we first provide a construction in a unit cell (\cref{lem:BranchingUnitCell}) and iterate this construction (\cref{prop:BranchingUpper}). This yields a construction which attains the boundary data in two directions. In order to attain these also on the remaining sides we use the flexibility of the wave cone for the divergence operator (\cref{prop:d_dim}). We remark that in two dimensions there would be no modification with respect to the gradient construction since there the curl and divergence only differ by a rotation of $90$ degrees.

In our unit cell branching construction, we do not work on the level of the potential, but directly consider the problem on the level of the wells. In this context, we recall the compatibility conditions for laminates formed by the divergence operator which is determined by the associated wave cone: For $M\in \R^{d\times d}$, we have that \(M \in \ker \AA(\xi)\) if and only it holds \(M \xi = 0\).

With this in hand, we introduce an auxiliary matrix which will play an important role in our construction:
Let \(A,B \in \R^{d \times d}\) be such that \((B-A)e_1 =0\) and let \(n = e_1 + \gamma_2 \nu\) for a unit vector \(\nu\) perpendicular to \( e_1\) and for some \(\gamma_2 \in \R, \ \gamma_2 \neq 0\).
We then define \(E_{\nu}\) by
 \begin{align} \label{eq:PerturbMatrix}
  E_{\nu} = \gamma_2 (B-A)\nu \otimes e_1
 \end{align}
and the associated ``perturbed'' matrix \(\tilde{A}_\nu = A + E_{\nu}\).
By construction, this matrix obeys the identities
\begin{align}\label{eq:PerturbMatrixProp}
 (\tilde{A}_\nu - A) \nu = 0, \ (B-\tilde{A}_\nu) n = 0.
\end{align}
This in particular allows for interfaces of \(\tilde{A}_\nu\) and \(A\) with normal \(\nu\) and of \(\tilde{A}_\nu\) and \(B\) with normal \(n\) which we will use in our branching construction below.

\begin{lem} \label{lem:BranchingUnitCell}
 For \(0<l<h \leq 1\), we define \(\omega = [0,l] \times [0,h] \times [0,1]^{d-2}\). Let \(A,B \in \R^{d \times d}\) be such that \((B - A)e_1 = 0\) and let \(F_\lambda = \lambda A + (1-\lambda)B\) for some \(\lambda \in (0,1)\). Then there exists \(u: \R^d \to \R^{d\times d}\) such that
 \begin{align*}
  \di u & =  0 \text{ in } \R^d ,\\
  u & = F_\lambda \text{ for } x_1 \in (-\infty,0) \cup (l,\infty).
  \end{align*}
  Furthermore, there exist \(\chi \in BV(\omega;\{A,B\})\) and a constant \(C = C(A,B) > 0\) such that for any \(\epsilon >0\) the localized energy can be bounded by
  \begin{align*}
  E_{\epsilon}(u,\chi;\omega) := \int_\omega |u-\chi|^2 dx + \epsilon \int_\omega |\nabla \chi| & \leq C (1-\lambda)^2\frac{l^3}{h} + 5 \epsilon h.
 \end{align*}
\end{lem}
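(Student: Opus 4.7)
The approach is a Kohn--Müller-style branching construction adapted to the divergence operator, using the auxiliary matrix $\tilde A_\nu$ from~\eqref{eq:PerturbMatrix} as an intermediary that makes slanted $A$-$B$ transitions divergence-compatible. The rank-one compatibility $(B-A)e_1 = 0$ alone only permits flat interfaces with normal $e_1$, but the supplementary relations in~\eqref{eq:PerturbMatrixProp} allow interfaces between $\tilde A_\nu$ and $A$ with normal $\nu$, and between $\tilde A_\nu$ and $B$ with normal $n = e_1 + \gamma_2 \nu$. The cell will be designed so that its bottom face $\{x_2 = 0\}$ shows a period-$l$ laminate and its top face $\{x_2 = h\}$ shows a period-$l/2$ laminate; this self-similar pattern makes the construction the right building block for the iteration in~\cref{prop:BranchingUpper}. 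Since $u$ and $\chi$ will be taken independent of $x_3, \ldots, x_d$, it suffices to work in two dimensions.

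Fix $\nu = e_2$ and set $\tilde A := A + \gamma_2 (B-A) e_2 \otimes e_1$ with $\gamma_2 \in \R$ to be determined. Following the picture, partition $\omega = [0,l] \times [0,h]$ into a left region on which $u = A$, a right region on which $u = B$, and one or two thin $\tilde A$-parallelograms in between, designed so that all $A$-$\tilde A$ boundaries are horizontal segments (normal $\nu$, compatible by $(\tilde A - A)\nu = 0$) and all $\tilde A$-$B$ boundaries are slanted lines of the single slope $-\gamma_2^{-1}$ (normal $n$, compatible by $(B - \tilde A) n = 0$). The magnitude of $\gamma_2$ is fixed by matching one laminate interface at the bottom to two at the top: the required horizontal displacement of each slanted interface across the height $h$ is proportional to $(1-\lambda) l$, whence $|\gamma_2| \sim (1-\lambda) l/h$. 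Outside $\omega$ we extend $u$ by $F_\lambda$; compatibility across the vertical external interfaces at $x_1 = 0, l$ holds thanks to $A e_1 = B e_1 = F_\lambda e_1$, a consequence of $(B-A)e_1 = 0$. Define $\chi \in BV(\omega; \{A, B\})$ by assigning the $\tilde A$-regions to the $A$-phase (the nearest well, since $\tilde A - A = E_\nu$ is small).

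With all compatibilities arranged, $\di u = 0$ in $\R^d$ and the boundary condition $u = F_\lambda$ for $x_1 \notin [0,l]$ holds by construction. For the energy, $|u - \chi|^2$ vanishes outside the $\tilde A$-parallelograms and equals $|E_\nu|^2 = \gamma_2^2 |(B-A) e_2|^2$ on them; since their total planar area is $O(lh)$,
\begin{align*}
 \int_\omega |u - \chi|^2\, dx \leq C(A, B)\, \gamma_2^2\, l h \leq C(A, B)\, (1-\lambda)^2\, l^3/h.
\end{align*}
The phase indicator $\chi$ has at most five $A$-$B$ line segments in $\omega$ (a vertical one from the bottom laminate together with the slanted edges of the parallelograms after relabeling), each of length at most $h$, so $\int_\omega |\nabla \chi| \leq 5 h$. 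Adding the two contributions yields the asserted bound.

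The principal obstacle is the geometric bookkeeping: all slanted $\tilde A$-$B$ edges must share the common slope $-\gamma_2^{-1}$ so that~\eqref{eq:PerturbMatrixProp} applies uniformly with a single parameter $\gamma_2$; the top and bottom slices must simultaneously realize the prescribed period-$l/2$ and period-$l$ laminates, pinning down $|\gamma_2| \sim (1-\lambda) l/h$; and the total area of the $\tilde A$-parallelograms must be $O(lh)$ so that its product with $|E_\nu|^2 \sim (1-\lambda)^2 (l/h)^2$ yields precisely the claimed $(1-\lambda)^2 l^3/h$ scaling, without any parasitic $\lambda$- or $h/l$-dependence.
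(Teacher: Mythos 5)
Your proposal gets the key ingredients right: introducing the perturbed matrix $\tilde A_\nu = A + E_\nu$ with $\nu = e_2$ and $|\gamma_2|\sim (1-\lambda)l/h$, designing a cell whose bottom shows a period-$l$ laminate and whose top shows a period-$l/2$ laminate, matching $F_\lambda$ at $x_1\in\{0,l\}$ via $(B-A)e_1=0$, and obtaining $E_{el}\lesssim |E_\nu|^2\cdot|\tilde A\text{-region}| \lesssim (1-\lambda)^2 l^3/h$ and $E_{surf}\lesssim h$. However, the specific geometry you describe is not divergence-admissible as stated. You want a $\tilde A$-parallelogram sitting "in between" the left $A$-region and the right $B$-region with horizontal $A$--$\tilde A$ interfaces (normal $\nu$) \emph{and} slanted $\tilde A$--$B$ interfaces (normal $n$). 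But the parallelogram edge facing the left $A$-region would be slanted or vertical, not horizontal, and a slanted $A$--$\tilde A$ interface with normal $n=e_1+\gamma_2\nu$ is \emph{incompatible}: $(\tilde A_\nu - A)\,n = E_\nu\,n = \gamma_2[(B-A)e_2]\,(e_1\cdot n) = \gamma_2(B-A)e_2 \neq 0$ in general. Only \(\nu\)-normal and \(n\)-normal interfaces are sanctioned by \eqref{eq:PerturbMatrixProp}, and a single parallelogram cannot present a horizontal face to $A$ while sitting to the right of it.

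The paper's construction sidesteps this entirely: the cell is partitioned, from left to right, as $A \mid B \mid \tilde A \mid B$ (regions $\omega_1,\ldots,\omega_4$). The $\tilde A$-region $\omega_3$ is a slanted strip of \emph{constant width} $\lambda l/2$ bounded on \emph{both} sides by $B$ via two slanted interfaces of normal $n = e_1 - \tfrac{(1-\lambda)l}{2h}e_2$; the $A$-region is a straight vertical strip at the far left, meeting $B$ across a single vertical interface of normal $e_1$; and there are \emph{no} internal $A$--$\tilde A$ interfaces. The horizontal compatibility $(\tilde A_\nu - A)\nu = 0$ is used only at the horizontal inter-generational junctions in the stacking of \cref{prop:BranchingUpper}, not inside the cell. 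One smaller slip: the slanted interfaces have length $\sqrt{h^2 + [(1-\lambda)l/2]^2} > h$, so "each of length at most $h$" is false, although since $l<h$ the total is still bounded by $h + 2\sqrt{h^2 + (1-\lambda)^2 l^2/4} \leq 5h$ as claimed.
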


\begin{proof}
 We consider the following partition of the domain $\omega$ into subdomains:
 \begin{gather*}
 \begin{aligned}
  \omega_1 & = \{x_1 \in (0,\frac{\lambda l}{2})\}, & \omega_2 & = \{x_1 \in (\frac{\lambda l}{2}, \frac{\lambda l}{2} + \frac{(1-\lambda)l}{2h} x_2)\}, \\
  \omega_3 & = \{x_1 \in (\frac{\lambda l}{2} + \frac{(1-\lambda)l}{2h}x_2, \lambda l +\frac{(1-\lambda)l}{2h} x_2)\}, & \omega_4 & = \{x_1 \in (\lambda l + \frac{(1-\lambda)l}{2h}x_2,l)\}.
  \end{aligned}
 \end{gather*}
Based on this we define
\begin{align*}
 u(x) & = \begin{cases}
         A & x \in \omega_1, \\
         B & x \in \omega_2 \cup \omega_4, \\
         A+E_{e_2} & x \in \omega_3,
        \end{cases} \quad
        \chi(x) = \begin{cases}
        A & x \in \omega_1 \cup \omega_3, \\
       B & x \in \omega_2 \cup \omega_4,             
                   \end{cases}
\end{align*}
where \(E_{e_2}\) is given in \eqref{eq:PerturbMatrix} for \(n = e_1 - \frac{(1-\lambda)l}{2h} e_2\).
We highlight that \(u\) is independent of \(x_k\) for \(k \geq 3\).
By definition of \(E_{e_2}\), the characterization of the wave cone  \eqref{eq:wave_cone} for the divergence operator and the remarks on laminates in \eqref{eq:PerturbMatrixProp}, this defines an divergence-free mapping.
Further, as \((B- F_\lambda)e_1 = (A-F_\lambda)e_1 = 0\), the exterior data are attained in $x_1 \in (-\infty,0)\cup (l,\infty)$.

To calculate the energy, we observe, that the only contribution to the elastic energy is given in \(\omega_3\). Hence,
\begin{align*}
 E_{el}(u,\chi;\omega) & := \int_{\omega} |u-\chi|^2 dx = \int_{\omega_3} |A+E_{e_2} - A|^2 dx = |E_{e_2}|^2 |\omega_3| = |(B-A)e_2|^2\frac{(1-\lambda)^2 \lambda l^3}{8 h}.
\end{align*}
As the surface energy is determined by the interfaces between \(\omega_j\) and \(\omega_k\), we obtain (\(l < h\))
\begin{align*}
 E_{surf}(\chi;\omega) := \int_{\omega}|\nabla \chi| = h + 2 \sqrt{\frac{(1-\lambda)^2 l^2}{4} + h^2} \leq h + 2 h \sqrt{\frac{(1-\lambda)^2}{4} + 1} \leq 5 h.
\end{align*}
This shows the claim.
\end{proof}

As for analogous constructions for the gradient, we will use this unit cell as a building block in order to achieve a self-similar construction attaining the boundary data on two directions.
For this to be admissible in the sense of an \(\A\)-free map, we rely on the following lemma.
It shows that for first order operators corners in which interfaces meet do not give rise to singularities.

\begin{lem} \label{lem:Corners}
 Let \(\A(D) = \sum_{j=1}^d A_j \p_j: C^\infty(\R^d;\R^n) \to C^\infty(\R^d;\R^m)\) be a homogeneous, linear operator of degree one with symbol \(\AA\) given in \eqref{eq:Symbol} and let \(\Omega_j\), \(j=1,\dots,l\), be a polygonal set (the set is defined as the intersection of half spaces) with outer unit normal \(n_j\)  such that
\begin{itemize} 
\item \(\R^d = \bigcup_{j=1}^l \Omega_j\), 
\item the two sets \(\Omega_j, \Omega_{j+1}\) have one common face (\(l+1 = 1\)), 
\item and such that they meet in one point, i.e. \(\bigcap_{j=1}^l \Omega_j = \{x_0\}\), cf. \cref{fig:GeometryCorners}.
\end{itemize}
Assume further that \(B_j \in \R^n\), \(j=1,\dots,l\), are such that \( B_j - B_{j+1} \in \ker \AA(n_j)\).
Then the map 
\begin{align*}
 u(x) = B_j \text{ for } x \in \Omega_j
\end{align*}
is \(\A\)-free.
\end{lem}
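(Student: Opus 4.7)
The natural approach is a direct distributional computation: since $u \in L^{\infty}(\R^d;\R^n)$ is piecewise constant on a Lipschitz polygonal partition, the distribution $\A(D)u$ is well-defined, and the plan is to show that it vanishes by integrating by parts on each $\Omega_j$ and showing the surface contributions cancel face by face.

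More concretely, for any test function $\varphi \in C_c^{\infty}(\R^d;\R^m)$ I will use the adjoint $\A(D)^* = -\sum_{i=1}^d A_i^T \p_i$ to write
\begin{align*}
 \langle \A(D) u, \varphi \rangle = \sum_{k=1}^{l} \int_{\Omega_k} B_k \cdot \A(D)^* \varphi \, dx
 = - \sum_{k=1}^l \sum_{i=1}^{d} \int_{\Omega_k} (A_i B_k) \cdot \p_i \varphi \, dx.
\end{align*}
Because each $\Omega_k$ is polygonal (hence Lipschitz) and $B_k$ is constant, the divergence theorem can be applied on each piece, converting the bulk term into a sum of surface integrals along $\p\Omega_k$. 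The interior contributions cancel pairwise on each shared face, leaving only jump terms across the interfaces $F_j := \p\Omega_j \cap \p\Omega_{j+1}$.

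The key calculation is then: collecting the two contributions at the face $F_j$, with outer normal $n_j$ from the $\Omega_j$ side and $-n_j$ from the $\Omega_{j+1}$ side, yields
\begin{align*}
 \langle \A(D) u, \varphi \rangle = - \sum_{j=1}^l \int_{F_j} \Bigl( \sum_{i=1}^d (n_j)_i A_i (B_j - B_{j+1}) \Bigr) \cdot \varphi \, dS = -\sum_{j=1}^{l} \int_{F_j} \AA(n_j)(B_j - B_{j+1}) \cdot \varphi \, dS,
\end{align*}
which vanishes term by term by the assumption $B_j - B_{j+1} \in \ker \AA(n_j)$. Since $\varphi$ was arbitrary, $\A(D) u = 0$ in $\mathcal{D}'(\R^d)$.

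The only delicate issue is the behavior at the corner $x_0 = \bigcap_j \Omega_j$, where all $l$ faces meet. I expect this not to cause trouble: $\{x_0\}$ has codimension at least two in $\R^d$, and the distributional derivative of a bounded $BV$ function is a Radon measure supported on the codimension-one jump set, so no $\delta$-like contribution at $\{x_0\}$ can arise from the computation above — the divergence theorem on the polygonal Lipschitz pieces genuinely produces only the codimension-one surface integrals that I wrote. Hence no separate argument near $x_0$ is required, and the proof concludes with the line above.
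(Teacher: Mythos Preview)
Your proposal is correct and follows essentially the same route as the paper: compute the distributional pairing $\langle \A(D)u,\varphi\rangle$ by splitting into the pieces $\Omega_k$, apply the divergence theorem on each Lipschitz piece to convert to boundary integrals, and observe that on each shared face $\p\Omega_j\cap\p\Omega_{j+1}$ the two contributions combine to $\int \AA(n_j)(B_j-B_{j+1})\cdot\varphi\,d\mathcal{H}^{d-1}=0$ by hypothesis. Your extra remark on why the corner $x_0$ carries no mass is a welcome bit of care, but the paper handles this implicitly by the same mechanism---the divergence theorem on the polygonal Lipschitz pieces already produces only codimension-one terms.
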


\begin{proof}
\begin{figure}
 \begin{tikzpicture}
  \foreach \x in {0,30,75,190,270}
    \draw (0,0) -- (\x:2cm);
  \fill (0,0) circle (2pt) node[above left] {$x_0$};
  \foreach \x/\y in {15/{\Omega_1},52.5/{\Omega_2},132.5/{\iddots},230/{\Omega_{l-1}},315/{\Omega_l}} 
    \node at (\x:1cm) {$\y$};
  \draw[->] (0,-1) -- (0.5,-1) node[below] {$n_{l-1}$};
 \end{tikzpicture}
\caption{Setting of \cref{lem:Corners}}
\label{fig:GeometryCorners}
\end{figure}

 First we note that \(u\) is indeed well-defined by the properties of \(\Omega_j\), further we notice that for \(M \in \R^n\) and \(U \subset \R^d\) a Lipschitz domain it holds for \(\varphi \in C^\infty_c(\R^d;\R^m)\)
 \begin{align*}
  \int\limits_U M \cdot (\A(D))^\ast  \varphi dx & = \sum_{k=1}^{d} \int\limits_U M \cdot \p_k(A_k^t \varphi) dx = \sum_{k=1}^d \int\limits_{\p U} M \cdot (A_k^t \varphi) n_k d\mathcal{H}^{d-1} = \int\limits_{\p U} \AA(n)M \cdot \varphi d\mathcal{H}^{d-1}.
 \end{align*}
With this it holds
\begin{align*}
 \int\limits_{\R^d} u \cdot (\A(D))^\ast \varphi dx = \sum_{j=1}^l \int\limits_{\Omega_j} B_j \cdot (\A(D))^\ast \varphi dx = \sum_{j=1}^l \int\limits_{\p \Omega_j} \AA(n_j) B_j \cdot \varphi d\mathcal{H}^{d-1}.
\end{align*}
Moreover on \(\p \Omega_j \cap \p \Omega_{j+1}\) it holds that \(n_{j+1} = - n_j \) and thus by the assumptions on \(B_j\)
\begin{align*}
 \int\limits_{\R^d} u \cdot (\A(D))^\ast \varphi dx = \sum_{j=1}^l \int\limits_{\p \Omega_j \cap \p \Omega_{j+1}} \AA(n_j)(B_j - B_{j+1}) \cdot \varphi d\mathcal{H}^{d-1} = 0.
\end{align*}
As \(\varphi\) was arbitrary the claim follows.
\end{proof}

With \cref{lem:Corners} in hand, we now iterate the unit cell-construction from \cref{lem:BranchingUnitCell}.

\begin{prop} \label{prop:BranchingUpper}
Let \(d,n \in \N\).
Let \(\Omega = [0,1]^d \), let \(A,B \in \R^n\) be such that \(B-A \in \Lambda_{\di}\), cf. \eqref{eq:wave_cone}, and let \(F_\lambda = \lambda A + (1-\lambda) B\) for some \(\lambda \in (0,1)\).
Let \(E_\epsilon\) be as in \eqref{eq:E-Total}. 
Then there exist \(u: \R^d \to \R^{d \times d}\) and \(\chi \in BV(\Omega;\{A,B\})\) with \(\di u = 0\) in \(\R^d\) and \(u = F_\lambda\) for \((x_1,x_2) \notin [0,1]^2\) such that for any \(\epsilon \in (0,1)\) and any \(N \in \N\)
  \begin{align*}
   E_{\epsilon}(u,\chi) \leq C ( \frac{1}{N^2} + \epsilon N)
  \end{align*}
  for some constant \(C = C(A,B,\lambda) > 0\).
\end{prop}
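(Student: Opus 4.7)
The plan is to iterate the unit-cell construction of \cref{lem:BranchingUnitCell} in a self-similar branching hierarchy, in which the \(x_1\)-period of the laminate halves at each refinement level as one moves from the horizontal sides \(\{x_2=0\}\), \(\{x_2=1\}\) of \(\Omega=[0,1]^d\) toward the midplane \(x_2 = 1/2\). Since \(B-A \in \Lambda_{\di}\), there exists \(\xi \in \mathbb{S}^{d-1}\) with \((B-A)\xi=0\); after a rotation of coordinates, which preserves \(F_\lambda\) and the shape of the cube, I may assume \(\xi = e_1\), so that \cref{lem:BranchingUnitCell} is applicable and the \(x_k\)-directions for \(k \geq 3\) play no role (everything being independent of them).

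First I partition \([0,1/2]\) in the \(x_2\)-direction into \(M \in \N\) layers of heights \(h_k = c\alpha^k\) (\(k=0,\dots,M-1\)) for some \(\alpha \in (1/4,1/2)\) and \(c\) chosen so that \(\sum_k h_k = 1/2\); in the \(k\)-th layer I place \(2^k N\) translated copies of the cell from \cref{lem:BranchingUnitCell} of width \(l_k = 1/(2^k N)\) and height \(h_k\), and I mirror this construction in \([1/2,1]\). The \(x_1\)-gluing of neighbouring cells within one layer is admissible, as each cell attains \(F_\lambda\) on its \(x_1\)-faces. The vertical gluing of layer \(k\) with layer \(k+1\) is admissible as well: the top pattern \((A,B,\tilde A_{e_2}^{(k)},B)\) of a level-\(k\) cell matches the bottom pattern \((A,B,A,B)\) of the two stacked level-\((k+1)\) cells up to a single jump \(E_{e_2}^{(k)} = \tilde A_{e_2}^{(k)} - A\) across an \(e_2\)-normal interface, which is admissible because \(E_{e_2}^{(k)} e_2 = 0\) by \eqref{eq:PerturbMatrixProp}; the corners where four polygonal regions meet are handled by \cref{lem:Corners}.

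The hard part, and the main obstacle, is to attain the exterior datum \(u = F_\lambda\) across the horizontal sides \(\{x_2=0\}\) and \(\{x_2=1\}\): a direct jump from the coarse \(A\)-\(B\) laminate that appears at \(x_2=0\) to \(F_\lambda\) across an \(e_2\)-normal interface is not admissible, since \((B-A)e_2 \neq 0\) in general. I plan to fix this by capping each end of the branching with a thin boundary layer of height \(\sim 1/N^2\), in which I laminate \(A\) and \(\tilde A_{e_2}\) (and analogously \(B\) with a suitable perturbation of \(B\)) in the \(e_2\)-direction: this lamination is admissible by \eqref{eq:PerturbMatrixProp} since \((\tilde A_{e_2}-A)e_2 = 0\), and the free parameter \(\gamma_2\) in \eqref{eq:PerturbMatrix} can be tuned so that the averaged value of the cap equals \(F_\lambda\), yielding a divergence-compatible horizontal interface with the exterior. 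The elastic and surface contributions of the cap are \(\lesssim (1-\lambda)^2/N^2\) and \(\lesssim \epsilon N\), respectively, and thus are absorbed into the target bound.

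Summing the per-cell estimate of \cref{lem:BranchingUnitCell} over the \(M\) layers, the elastic energy contribution of layer \(k\) is \(\lesssim 2^k N \cdot (1-\lambda)^2 l_k^3/h_k \sim (1-\lambda)^2 N^{-2}(4\alpha)^{-k}\), so with \(4\alpha>1\) one obtains \(E_{el}(u,\chi) \lesssim N^{-2}\); likewise the surface contribution of layer \(k\) is \(\lesssim \epsilon \cdot 2^k N \cdot h_k \sim \epsilon N (2\alpha)^k\), so with \(2\alpha<1\) one obtains \(\epsilon\, E_{surf}(\chi) \lesssim \epsilon N\). Both geometric series converge uniformly in \(M\), so the construction is available for every \(N \in \N\) and every \(M \in \N\), and taking into account the cap contributions it yields the desired bound \(E_\epsilon(u,\chi) \leq C(N^{-2} + \epsilon N)\) with \(C = C(A,B,\lambda)\).
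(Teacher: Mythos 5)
Your iteration of \cref{lem:BranchingUnitCell}, the gluing via \cref{lem:Corners} and \eqref{eq:PerturbMatrixProp}, and the two geometric series for the interior elastic and surface contributions are all in line with the paper's argument. The genuine gap is the ``cap'' you use to attain the exterior datum across the faces \(\{x_2=0\}\) and \(\{x_2=1\}\). Divergence-freeness across the face \(\{x_2=0\}\) is a \emph{pointwise} jump condition, \((u^+-F_\lambda)e_2=0\) at a.e.\ point of the face, not an averaged one. Every value taken by your cap lies in \(\{A,\,A+E_{e_2},\,B,\,B+E'_{e_2},\dots\}\) with \(E_{e_2}e_2 = E'_{e_2}e_2 = 0\) for any choice of \(\gamma_2\) (this is built into \eqref{eq:PerturbMatrix}--\eqref{eq:PerturbMatrixProp}); hence for every such value \(M\) one has \((M-F_\lambda)e_2 \in \{(1-\lambda)(A-B)e_2,\ -\lambda(A-B)e_2\}\), which is nonzero unless \((A-B)e_2=0\) — and only \((A-B)e_1=0\) is assumed. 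Tuning \(\gamma_2\) does not help because the factor \(e_1\) in \(E_{e_2}=\gamma_2(B-A)e_2\otimes e_1\) kills the \(e_2\)-component regardless; moreover even the ``average equals \(F_\lambda\)'' condition cannot be met, since \(F_\lambda-A=(1-\lambda)(B-A)\) is generically not a multiple of the rank-one matrix \((B-A)e_2\otimes e_1\). In short, no piecewise-constant lamination of wells and their \(E_\nu\)-perturbations can be divergence-compatible with \(F_\lambda\) across an \(e_2\)-normal face.

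The paper resolves this differently: instead of a laminate cap, it builds a smooth divergence-free interpolant \(\tilde u_{j_0+1}\) in the last layer from a cut-off \(\phi\) and a sawtooth profile \(h\); the extra term proportional to \(\phi'\,h\,(A-B)e_2\otimes e_1\) is not in \(\{A,B\}\) and is exactly where the boundary-layer elastic energy is paid. This also explains why the branching direction matters: the paper refines \emph{toward} the boundary (\(y_j=1-\theta^j/2\to1\)), so that the cut-off layer has width \(\sim l_{j_0}\) and its elastic cost \(\sim l_{j_0}^2\) is subdominant; your orientation (coarse at the boundary, fine at the midplane) would put the cut-off over cells of width \(1/N\), and the analogous cost over a layer of thickness \(1/N^2\) is of order \(N\cdot l_0^3/(1/N^2)\sim 1\), not \(O(N^{-2})\). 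So both the replacement of the cut-off by a laminate cap and the reversed branching orientation would need to be corrected. (The brief claim that a rotation aligning \(\ker(B-A)\) with \(e_1\) ``preserves the shape of the cube'' is also too strong — only the finite symmetry group of the cube does so — but the paper makes the same unexplained ``without loss of generality'' reduction, so this is not where the proposal diverges.)
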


\begin{proof}
Without loss of generality we may assume \((B-A)e_1 = 0\), i.e. \(B-A \in \ker \AA(e_1)\) for \(\A(D) = \di\).
 Let \(\theta \in (\frac{1}{4},\frac{1}{2})\).
 We argue symmetrically in the upper and lower half of the cube, i.e. we give the construction of \(u\) on \([0,1] \times [\frac{1}{2},1] \times [0,1]^{d-2}\) and define $u$ on the lower half by symmetry.
 We define for \(N \in \N\) and for \(j \in \N_0\)
 \begin{align*}
  y_j = 1 - \frac{\theta^j}{2}, l_j = \frac{1}{2^jN}, h_j = y_{j+1}-y_j = \theta^j \frac{1-\theta}{2}.
 \end{align*}
Furthermore, let \(j_0 \in \N\) be the maximal \(j \in \N\) such that \(l_j < h_j\).
We set
\begin{align*}
 \omega_{j,k} = \Big((kl_j,y_j) + [0,l_j] \times [0,h_j] \Big)\times [0,1]^{d-2}, 
                 \end{align*}
                 for \(k \in \{0,1,\dots,N2^j-1\}, j \in \{0,1,\dots,j_0\}\); for \(k \in\{0,1,\dots,N 2^{j_0}-1\}, j = j_0+1\) we set
                 \begin{align*}
                 \omega_{j_0+1,k} = \Big((kl_{j_0},y_{j_0+1}) + [0,l_{j_0}] \times [0,\frac{\theta^{j_0+1}}{2}] \Big)\times [0,1]^{d-2}.
\end{align*}
Let \(u_j,\chi_j\) in \([0,l_j] \times [0,h_j] \times [0,1]^{d-2}\) be given by \cref{lem:BranchingUnitCell} for \(j=1,\dots,j_0\).
Further, in the layer \(j=j_0+1\) we interpolate with the desired boundary data by a cut-off argument:
To this end, we introduce the cut-off function \(\phi: [0,\infty) \to [0,1]\) and the profile \(h:[0,l_{j_0}] \to [0,\infty)\) by setting 
\begin{align*}
 \phi(t) = \begin{cases} 1 & t \in [0,\frac{1}{2}], \\
            -4t + 3 & t \in (\frac{1}{2},\frac{3}{4}), \\
            0 & t \geq \frac{3}{4},
           \end{cases} \quad
           h(t) = \begin{cases} (1-\lambda)t & t \in [0,\lambda), \\
                   \lambda (1-t) & t \in [\lambda, 1].
                  \end{cases}
\end{align*}
We consider the function \(\tilde{u}_{j_0+1}: [0,l_{j_0}] \times [0,\frac{\theta^{j_0+1}}{2}] \times [0,1]^{d-2}\) defined via
\begin{align*}
 \tilde{u}_{j_0+1}(x) = - \frac{2 l_{j_0+1}}{\theta^{j_0+1}} \phi'\big(\frac{2 x_2}{\theta^{j_0+1}}\big) h\Big(\frac{x_1}{l_{j_0+1}}\Big) ((A-B)e_2) \otimes e_1 + \phi\big(\frac{2 x_2}{\theta^{j_0+1}}\big) h'\Big(\frac{x_1}{l_{j_0+1}}\Big) (A-B) + F_\lambda.
\end{align*}
The associated phase indicator is defined by
\begin{align*}
 \chi_{j_0+1}(x) = \chi_{(0,\lambda l_{j_0})}(x_1) A + \chi_{(\lambda l_{j_0},l_{j_0})}(x_1) B.
\end{align*}
We note that \(\chi_{j_0+1}(x) = h'(\frac{x_1}{l_{j_0+1}}) (A-B) + F_\lambda\) and moreover for \(x_2 <  \frac{1}{2}\) it holds \(\tilde{u}_{j_0+1}(x) = \chi_{j_0+1}(x) \) and for \(x_2 > \frac{3}{4}\) correspondingly \(\tilde{u}_{j_0+1}(x) = F_\lambda\).
Furthermore, for \(x_1 \in \{0,l_{j_0+1}\}\), we know \(h(\frac{x_1}{l_{j_0+1}}) = 0\) and thus \((\tilde{u}_{j_0+1}(x) - F_\lambda) e_1 = \phi(\frac{2 x_2}{\theta^{j_0+1}}) h'\Big(\frac{x_1}{l_{j_0+1}}\Big) (A-B)e_1 =0 \).

With the help of this construction we meet the prescribed data for \(x_2 \geq 1\), and we can define \(u\) in the upper half of the full cube:
\begin{align*}
 u(x) = \begin{cases}
         u_j(x-(kl_j,y_j)) & x \in \omega_{j,k}, \\
         \tilde{u}_{j_0+1}(x - (kl_{j_0},y_{j_0+1})) & x \in \omega_{{j_0+1},k}.
        \end{cases}
\end{align*}
For the lower half of the cube we argue similarly, mirroring the unit cell construction of \cref{lem:BranchingUnitCell}, i.e. instead of \(E_{e_2}\) we consider \(E_{-e_2}\).
We define \(\chi\) in \([0,1]^d\) analogously.

We note, that this defines a divergence free mapping, as all the laminations are in compatible directions as \((B-A)e_1 = 0\) and by the choice of \(E_{e_2}\) in \eqref{eq:PerturbMatrix}.
\Cref{lem:Corners} shows, that we are divergence-free even thought interfaces meet in corners.
Moreover, we can bound the energy in the \(\omega_{j_0+1,k}\) cells for any \(k \in \{1,\dots,N2^{j_0}-1\}\):
\begin{align*}
 |\tilde{u}_{j_0+1}(x) - \chi_{j_0+1}(x)|^2  & = \Big|\frac{2 l_{j_0+1}}{\theta^{j_0+1}} \phi'\big(\frac{2x_2}{\theta^{j_0+1}}\big) h\Big(\frac{x_1}{l_{j_0+1}}\Big) ((A-B)e_2 \otimes e_1) \\
 & \qquad - (\phi\big(\frac{2x_2}{\theta^{j_0+1}}\big) - 1) h'\Big(\frac{x_1}{l_{j_0+1}}\Big) (A-B) \Big|^2\\ 
 &\leq C(A,B,\lambda)\Big( \frac{l_{j_0+1}^2}{\theta^{2j_0+2}} \phi'^2\big(\frac{2x_2}{\theta^{j_0+1}}\big) + 1 \Big),
\end{align*}
and, since \(l_{j_0+1} \geq h_{j_0+1}\) and \(\theta^{j_0+1} \sim h_{j_0+1}\),
\begin{align*}
 \int_{[0,l_{j_0+1}] \times [0,\frac{\theta^{j_0+1}}{2}] \times [0,1]^{d-2}} |\tilde{u}_{j_0+1} - \chi_{j_0+1}(x)|^2  dx & \leq C l_{j_0+1} (\int_0^1 \frac{l_{j_0+1}^2}{\theta^{j_0+1}} \phi'^2(t) dt + \theta^{j_0+1}) \\
 & \leq C ( \frac{l_{j_0+1}^3}{\theta^{j_0+1}} + l_{j_0+1} \theta^{j_0+1} ) \\
 & \leq C \frac{l_{j_0+1}^3}{h_{j_0+1}}.
\end{align*}
Furthermore, the surface energy is bounded by \(E_{surf}(\chi_{j_0+1};\omega_{j_0+1,k}) \leq C(A,B,\lambda) h_{j_0+1}\).

Overall, we have a function defined on \([0,1]^d\) and can extend it to be \(F_\lambda\) for \((x_1,x_2) \notin [0,1]^2\). The energy then can be bounded by
\begin{align*}
 E_{\epsilon}(u,\chi) & \leq 2 \sum_{j=0}^{j_0+1} \sum_{k=0}^{N2^j} E_{\epsilon}(u_j,\chi_j;\omega_{j,k}) \leq C \sum_{j=0}^{j_0+1} N 2^j ( \frac{l_j^3}{h_j} + \epsilon h_j) \\
 & \leq C \sum_{j=0}^{j_0+1} ( \frac{l_j^2}{h_j} + \epsilon \frac{h_j}{l_j}) = C \sum_{j=0}^{j_0+1} \frac{1}{N^2} (\frac{1}{4\theta})^j + \epsilon N (2\theta)^j \\
 & \leq C ( \frac{1}{N^2} + \epsilon N).
\end{align*}

\end{proof}

\begin{rmk}
We remark that in the situation of the divergence operator, there are situations with substantially more flexibility than for the gradient: If for the two wells $A,B\in \R^{d\times d}$ it does not only hold that \((B-A)e_1 = 0 \) but also that \((B-A)e_2=0\), there would not be any elastic energy contribution involved. In this situation, for the above construction, we would only have contributions to the surface energy, as then \(E_{e_2} = 0\) and hence \(A+E_{e_2} = A \in \{A,B\}\). In particular, for boundary data which are only attained on two directions this would yield a linear scaling law in $\epsilon$.
For curl free mappings as in gradient inclusions, this is not possible, as the direction of lamination is unique in that case, i.e. \(V_{\operatorname{rot},\lambda}\) is at most one-dimensional.
\end{rmk}

As a last auxiliary step towards the upper bound construction from \cref{thm:TwoWell}, in order to achieve the exterior data on \emph{all} sides of the unit cube, we adapt the branching construction similarly as in \cite{RT21}, as the construction from \cref{prop:BranchingUpper} does not yet satisfy \(F_\lambda\) at, e.g., \(x_3 =0\). Thus, we combine \cref{prop:BranchingUpper} with a further domain splitting for which we split \([0,1]^d\) into different regions. In each region, we prescribe a different direction for the branching construction from \cref{prop:BranchingUpper}.
 To this end, we use that the choice of \(e_2\) in the above results was arbitrary and we also can choose any other direction \(e_j\) for \(j \in \{2,\dots,d\}\). Combined with compatibility conditions at the resulting interfaces, this will allow us to deduce the desired branching construction.

 \begin{prop}
 \label{prop:d_dim}
 Under the same assumptions as in \cref{prop:BranchingUpper} there exist \(u: \R^d \to \R^{d \times d}\), \(\chi \in BV(\Omega;\{A,B\})\) and a constant \(C = C(A,B,\lambda) > 0\) such that \(u \in \mathcal{D}_{F_\lambda}\) for \(F_\lambda = \lambda A + (1-\lambda)B\) (\(\lambda \in (0,1)\)) and for any \(\epsilon \in (0,1)\) and \(N \in \N\) it holds
 \begin{align*}
  E_\epsilon(u,\chi) \leq C (\frac{1}{N^2} + \epsilon N).
 \end{align*}
\end{prop}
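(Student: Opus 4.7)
The plan is to build $u$ and $\chi$ on $[0,1]^d$ by pasting together $d-1$ copies of the construction of \cref{prop:BranchingUpper}, one for each transverse direction $e_j$, $j \in \{2,\ldots,d\}$. The underlying observation is that the role of $e_2$ in \cref{prop:BranchingUpper} is arbitrary: replacing $e_2$ by $e_j$ throughout the argument yields an auxiliary map $u^{(j)}$ with $u^{(j)} = F_\lambda$ whenever $(x_1,x_j) \notin [0,1]^2$, employing the perturbation matrices $E_{\pm e_j} = \pm \gamma (B-A) e_j \otimes e_1$, and satisfying the same energy bound $C(1/N^2 + \epsilon N)$.

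First, I would decompose the cube into the $d-1$ wedges
\begin{align*}
 W_j := \{x \in [0,1]^d : |x_j - \tfrac{1}{2}| \geq |x_k - \tfrac{1}{2}| \text{ for all } k \in \{2,\ldots,d\}\}, \quad j \in \{2,\ldots,d\},
\end{align*}
noting that each pair of opposite faces $\{x_j = 0\}$, $\{x_j = 1\}$ of $[0,1]^d$ lies in the interior of $W_j$, and that every wedge interface $W_j \cap W_k$ is contained in a hyperplane whose outer normal lies in $\spa(e_j,e_k)$ and is therefore orthogonal to $e_1$. I would then define $u := u^{(j)}$ on $W_j$ (and $F_\lambda$ outside $[0,1]^d$) and verify the two desired properties: (i) attainment of $F_\lambda$ on every face of $[0,1]^d$, with the faces transverse to $e_j$ handled inside $W_j$ and the faces transverse to $e_1$ handled by each $u^{(j)}$, and (ii) divergence-free compatibility across the wedge interfaces.

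For (ii) I would exploit that the values of $u^{(j)}$ in $W_j$ lie in $\{A,B,A + E_{\pm e_j}\}$ apart from the final cut-off layer, so that any ``mixed'' jump $E_{\pm e_j} - E_{\pm e_k}$ at an interface is a tensor product $M \otimes e_1$; consequently $[u] \cdot n = M(e_1 \cdot n) = 0$ whenever $n \perp e_1$, and the interface is compatible. The main obstacle I anticipate is the possibility of direct $A \leftrightarrow B$ jumps across the wedge interfaces, which are \emph{not} of this tensor form and would in general violate $\di u = 0$. To rule these out I would insert a buffer of width $\sim 1/N$ along each interface $W_j \cap W_k$ in which the construction is cut off to $F_\lambda$; in order to preserve the divergence-free constraint the cut-off has to be implemented via functions depending only on the transverse variables $x_2,\ldots,x_d$, so that they commute with the $\p_1$-only structure carried by the perturbations $M \otimes e_1$. \cref{lem:Corners} then ensures compatibility along the lower-dimensional intersections where several wedges meet. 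Summing the $d-1$ wedge contributions together with the buffer contributions (elastic $\sim 1/N$, surface $\sim \epsilon N$) finally yields $E_\epsilon(u,\chi) \leq C(1/N^2 + \epsilon N)$ with $C = C(A,B,\lambda,d)$.
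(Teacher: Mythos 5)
Your decomposition into wedges $W_j$ and the key observation that the mixed jumps $E_{\pm e_j}-E_{\pm e_k}$ are of tensor form $M\otimes e_1$, hence compatible across interfaces whose normal is orthogonal to $e_1$, coincide with the paper's argument (the paper writes the wedges as $\Omega_j^\pm$; your $W_j=\Omega_j^+\cup\Omega_j^-$). There is, however, a genuine gap in the remaining part of the argument, and it is located exactly at the point you flag as the main obstacle.

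You correctly ask whether direct $A\leftrightarrow B$ jumps can occur across $W_j\cap W_k$: such a jump would indeed be incompatible, since $(B-A)n$ need not vanish for $n\in\spa(e_j,e_k)$. But the structural fact you did not check --- and which makes the paper's proof work with no buffer at all --- is that the branching profiles of $u^{(j)}$ and $u^{(k)}$ are \emph{in phase} along the interface. The layer index in \cref{prop:BranchingUpper} depends only on the transverse distance $|x_j-\tfrac12|$ (resp. $|x_k-\tfrac12|$), and these agree on $W_j\cap W_k$; within a given layer, the assignment to the subregions $\omega_1,\dots,\omega_4$ of \cref{lem:BranchingUnitCell} depends only on $x_1$ (and the layer index), not on the branching direction. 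Consequently, at every point of the interface the two constructions assign the same member of $\{A,B,\tilde A_{\pm e_\cdot}\}$ (and agree in the final cut-off layer as well, up to the perturbation-matrix term). The only non-zero jump is therefore $E_{\pm e_j}-E_{\pm e_k}=M\otimes e_1$, which is compatible, and \cref{lem:Corners} handles the lower-dimensional intersections. There are simply no $A\leftrightarrow B$ jumps to rule out.

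Moreover, the buffer you propose would \emph{break} the result rather than repair it. A slab of width $\sim 1/N$ around the interfaces on which $u\equiv F_\lambda$, with $\chi\in\{A,B\}$, contributes elastic energy of order $\min\{\lambda,1-\lambda\}^2|A-B|^2/N$, since $F_\lambda\notin\{A,B\}$. This is $\sim 1/N$, not $\sim 1/N^2$, and therefore dominates the desired bound: your closing assertion that the total energy is $\leq C(1/N^2+\epsilon N)$ while crediting the buffer with elastic contribution $\sim 1/N$ is inconsistent. Once you observe that the branching structures already match on the interface, the buffer is unnecessary and the direct pasting yields the stated bound.
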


\begin{proof}
For simplicity, we first carry out the details for the case \(d=3\) and then only comment on the changes in the case of arbitrary dimension.
 We split \([0,1]^3\) into the following four parts and use different branching directions in each part: Let
 \begin{align*}
  \Omega_2^{\pm} &= \{x \in [0,1]^3 : \pm (x_2-\frac{1}{2}) \geq 0,\ \frac{1}{2}-|x_2-\frac{1}{2}| \leq \frac{1}{2}-|x_3-\frac{1}{2}|\}, \\
  \Omega_3^{\pm} &= \{x \in [0,1]^3 : \pm (x_3 - \frac{1}{2}) \geq 0,\ \frac{1}{2}-|x_3-\frac{1}{2}| \leq \frac{1}{2}-|x_2-\frac{1}{2}|\},
 \end{align*}
and consider the upper (\(\Omega_j^+\)) and lower (\(\Omega_j^-\)) halves separately as in the proof of \cref{prop:BranchingUpper}.

Next, we define \(u_2\) in \(\Omega_2^+\) and \(u_3\) in \(\Omega_3^+\) using \cref{prop:BranchingUpper}:
 The function \(u_2\) is given by the function from \cref{prop:BranchingUpper} above, whereas \(u_3\) is obtained from \(u_2\) by exchanging roles of \(e_2\) and \(e_3\), i.e. the branching is done in \(e_3\) direction and we use \(E_{e_3}\) instead of \(E_{e_2}\). For 
 \(\nu \in \{e_2, e_3\}\) the error matrix \(E_{\nu}\) is given in \eqref{eq:PerturbMatrix}.
 We then define the overall deformation \(u\) in $\Omega_2^+ \cup \Omega_3^+$ by
 \begin{align*}
  u(x) = \begin{cases}
          u_2(x) & x \in \Omega_2^{+},\\
          u_3(x) & x \in \Omega_3^{+}.
         \end{cases}
 \end{align*}
 This construction is depicted in \cref{fig:3dBranching}. As above $u$ is defined in the lower halves $\Omega_2^{-},\Omega_3^{-}$ by symmetry.

 We claim that this overall construction is divergence-free. In the individual regions $\Omega_2^+$ and $\Omega_3^+$ this follows by \cref{prop:BranchingUpper}. It thus remains to discuss the compatibility at the interface $x_2=x_3$. Since all other values of $u$ are given by (matching domains in which) $u\in \{A,B\}$, it suffices to discuss the compatibility of the error matrices $E_{e_2}$ and $E_{e_3}$ at this interface. To this end, we however note that \((E_{e_3}-E_{e_2}) \zeta = 0\) for all \(\zeta \in \vspan(e_2,e_3)\), i.e. also this  interface is admissible. This shows that \(u\) indeed defines a divergence free map.

The upper bound for the elastic and surface energies from \cref{prop:BranchingUpper} remains valid, thus yielding the claimed estimate which concludes the proof of the proposition.

In order to show the \(d\)-dimensional result, we split \([0,1]^d\) into \(2d-2\) regions \(\Omega_j^{\pm} := \{x \in [0,1]^d : \pm (x_j-\frac{1}{2}) \geq 0,\frac{1}{2}-|x_j-\frac{1}{2}| = \min_{2 \leq k \leq d} \frac{1}{2} - |x_k - \frac{1}{2}|\}\) for \(j = 2,\dots,d\) and argue as above.
\end{proof}

Finally, with \cref{prop:d_dim} in hand, we immediately obtain the proof of the upper bound construction from \cref{thm:TwoWell} for the divergence operator.

\begin{proof}[Proof of the upper bound in \cref{thm:TwoWell}]
In order to deduce the upper bound of \cref{thm:TwoWell}, we choose \(N \sim \epsilon^{-\frac{1}{3}}\), which shows the claim.
\end{proof}

\begin{figure}
 \begin{tikzpicture}[scale=0.5]
  % draw coordinates
  \draw[->] (8,0,0) -- (9,0,0) node[below] {$e_1$};
  \draw[->] (8,0,0) -- (8,1,0) node[right] {$e_2$};
  \draw[->] (8,0,0) -- (8,0,1) node[below] {$e_3$};

  % draw big cuboid
  \draw (0,0,0) -- (5,0,0) -- (5,7,0) -- (0,7,0) -- (0,0,0); % back
  \draw (0,0,0) -- (0,0,7) -- (5,0,7) -- (5,0,0); % down
  \draw[gray] (0,0,7) -- (0,7,7) -- (0,7,0); % left
  \draw[gray] (5,0,7) -- (5,7,7) -- (5,7,0); % right
  \draw[gray] (0,7,7) -- (5,7,7); % connection

  % draw smaller cuboids (\theta = 0.4)
  \draw (0,7,0) -- (0,9.8,0) -- (5,9.8,0) -- (5,7,0); % back
  \draw (2.5,7,0) -- (2.5,9.8,0);
  \draw[dashed] (1.25,7,0) -- (1.25,9.8,0);
  \draw[dashed] (3.75,7,0) -- (3.75,9.8,0);

  \draw (0,0,7) -- (0,0,9.8) -- (5,0,9.8) -- (5,0,7); % down
  \draw (2.5,0,7) -- (2.5,0,9.8);
  \draw[dashed] (1.25,0,7) -- (1.25,0,9.8);
  \draw[dashed] (3.75,0,7) -- (3.75,0,9.8);

  \draw[gray] (0,0,9.8) -- (0,9.8,9.8) -- (0,9.8,0); % left
  \draw[gray] (5,0,9.8) -- (5,9.8,9.8) -- (5,9.8,0); % right
  \draw[gray] (0,9.8,9.8) -- (5,9.8,9.8); % connection
  \draw[gray] (2.5,9.8,0) -- (2.5,9.8,9.8) -- (2.5,0,9.8); % middle connections

  % draw scales
  \draw (6,0,0) -- (6,9.8,0);
  \draw (5.95,0,0) -- (6.05,0,0);
  \draw (5.95,7,0) -- (6.05,7,0);
  \draw (5.95,9.8,0) -- (6.05,9.8,0);
  \node[right] at (6,3.5,0) {$h_1$};
  \node[right] at (6,8.4,0) {$\theta h_1$};

  \draw (6,0,0) -- (6,0,9.8);
  \draw (5.95,0,7) -- (6.05,0,7);
  \draw (5.95,0,9.8) -- (6.05,0,9.8);
  \node[below right] at (6,0,3.5) {$h_1$};
  \node[below right] at (6,0,8.4) {$\theta h_1$};

  \draw (0,10.5,0) -- (5,10.5,0);
  \draw (0,10.45,0) -- (0,10.55,0);
  \draw (5,10.45,0) -- (5,10.55,0);
  \node[above right] at (2.5,10.5,0) {$l_1$};

  \draw (0,11,0) -- (2.5,11,0);
  \draw (0,10.95,0) -- (0,11.05,0);
  \draw (2.5,10.95,0) -- (2.5,11.05,0);
  \node[above] at (1.25,11,0) {$\frac{l_1}{2}$};

  % draw branching on big back face (\lambda = 0.6)
  \draw (1.5,0,0) -- (1.5,7,0);
  \draw (1.5,0,0) -- (2.5,7,0);
  \draw (3,0,0) -- (4,7,0);
  \draw[dashed] (2.5,0,0) -- (2.5,7,0);

  % draw branching on big down face (\lambda = 0.6)
  \draw (1.5,0,0) -- (1.5,0,7);
  \draw (1.5,0,0) -- (2.5,0,7);
  \draw (3,0,0) -- (4,0,7);
  \draw[dashed] (2.5,0,0) -- (2.5,0,7);

  % draw branchin on small back face
  \draw (0.75,7,0) -- (0.75,9.8,0); % left
  \draw (0.75,7,0) -- (1.25,9.8,0);
  \draw (1.5,7,0) -- (2,9.8,0);

  \draw (3.25,7,0) -- (3.25,9.8,0); % right
  \draw (3.25,7,0) -- (3.75,9.8,0);
  \draw (4,7,0) -- (4.5,9.8,0);

  % draw branchin on small down face
  \draw (0.75,0,7) -- (0.75,0,9.8); % left
  \draw (0.75,0,7) -- (1.25,0,9.8);
  \draw (1.5,0,7) -- (2,0,9.8);

  \draw (3.25,0,7) -- (3.25,0,9.8); % right
  \draw (3.25,0,7) -- (3.75,0,9.8);
  \draw (4,0,7) -- (4.5,0,9.8);

  % draw plane seperating the regions
  \fill[draw = none, fill= red, fill opacity = 0.1] (0,0,0) -- (5,0,0) -- (5,9.8,9.8) -- (0,9.8,9.8) -- cycle;
  \draw[dashed, red] (0,0,0) -- (0,9.8,9.8);
  \draw[dashed, red] (5,0,0) -- (5,9.8,9.8);
  \draw[red] (2.5,7,7) -- (2.5,9.8,9.8);

  % draw constant extension big faces
  \draw[opacity=0.3] (1.5,7,0) -- (1.5,7,7) -- (1.5,0,7);
  \draw[opacity=0.3] (2.5,7,0) -- (2.5,7,7) -- (2.5,0,7);
  \draw[opacity=0.3] (4,7,0) -- (4,7,7) -- (4,0,7);

  % draw constant extension small faces
  \draw[opacity=0.3] (0.75,0,7) -- (0.75,7,7) -- (0.75,7,0);
  \draw[opacity=0.3] (3.25,0,7) -- (3.25,7,7) -- (3.25,7,0);
  \draw[opacity=0.3] (0.75,0,9.8) -- (0.75,9.8,9.8) -- (0.75,9.8,0);
  \draw[opacity=0.3] (1.25,0,9.8) -- (1.25,9.8,9.8) -- (1.25,9.8,0);
  \draw[opacity=0.3] (2,0,9.8) -- (2,9.8,9.8) -- (2,9.8,0);
  \draw[opacity=0.3] (3.25,0,9.8) -- (3.25,9.8,9.8) -- (3.25,9.8,0);
  \draw[opacity=0.3] (3.75,0,9.8) -- (3.75,9.8,9.8) -- (3.75,9.8,0);
  \draw[opacity=0.3] (4.5,0,9.8) -- (4.5,9.8,9.8) -- (4.5,9.8,0);

  % draw the intersection of seperating lines in big plane
  \draw[red] (1.5,0,0) -- (1.5,7,7);
  \draw[red] (1.5,0,0) -- (2.5,7,7);
  \draw[red] (3,0,0) -- (4,7,7);

  % draw the intersection of seperating lines in small plane
  \draw[red] (0.75,7,7) -- (0.75,9.8,9.8); % left
  \draw[red] (0.75,7,7) -- (1.25,9.8,9.8);
  \draw[red] (1.5,7,7) -- (2,9.8,9.8);
  \draw[red] (3.25,7,7) -- (3.25,9.8,9.8); % right
  \draw[red] (3.25,7,7) -- (3.75,9.8,9.8);
  \draw[red] (4,7,7) -- (4.5,9.8,9.8);

  % draw the 'interesting' interface
  \fill[draw = none, fill = orange, fill opacity = 0.3] (1.5,0,0) -- (3,0,0) -- (4,7,7) -- (2.5,7,7) -- cycle; % big
  \fill[draw = none, fill = orange, fill opacity = 0.3] (0.75,7,7) -- (1.5,7,7) -- (2,9.8,9.8) -- (1.25,9.8,9.8) -- cycle; % small left
  \fill[draw = none, fill = orange, fill opacity = 0.3] (3.25,7,7) -- (4,7,7) -- (4.5,9.8,9.8) -- (3.75,9.8,9.8) -- cycle; % small right
 \end{tikzpicture}
\caption{The three-dimensional branching construction to achieve the boundary data on all sides. The shaded regions are the diagonal interface at \(x_2 = x_3\) with the interfaces of \(A+E_{e_2}\) and \(A+E_{e_3}\) marked in orange.}
\label{fig:3dBranching}
\end{figure}
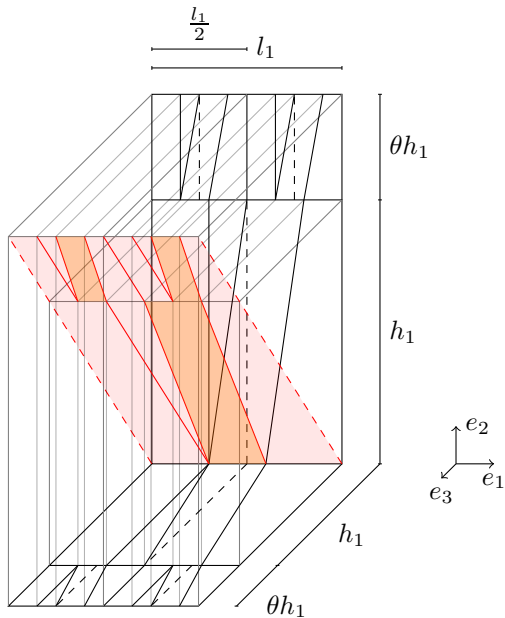

\begin{rmk}[Generalizations]
Building on the ideas from the gradient case and the ones from above one can formulate (rather restrictive) conditions, allowing for similar constructions for more general linear, constant coefficient differential operators. A key difficulty here consists in the ``hard form'' of the prescribed boundary conditions. When considering ``softer forms'' of these, as for instance in \cite{CKO99}, constructions for general constant coefficient operators with the desired boundary conditions would be feasible under much more general conditions by using Fourier theoretic arguments as in \cite{CO09}. We do not pursue these ideas here but postpone this to possible future work.
\end{rmk}

\section{On the role of the divergence operator} 
\label{sec:RoleDivergence}

Following \cite{ST21}, in this section we highlight the relevance of the divergence operator which also partially motivates our discussion of the scaling law for the $T_3$ problem from \cref{sec:IntroT3}. 

To this end, we first recall that considering any first order homogeneous constant coefficient differential operator \(\A(D) = \sum_{j=1}^d A^j\p_j: C^\infty(\R^d;\R^n) \to C^\infty(\R^d;\R^m)\), we can rewrite \(\A(D) u = \di \omega_1(u)\) with a linear map \(\omega_1: \R^n \to \R^{m \times d}\).
Indeed, let us define
\begin{align*}
 \omega_1: \R^n \to \R^{m \times d},\ \omega_1(x) = \big( \sum_{k=1}^n A^j_{ik} x_k \big)_{\substack{i=1,\dots,m,\\ j=1,\dots,d}}.
\end{align*}
Let \(u \in C^\infty(\R^d;\R^n)\), then it holds \(\A(D) u = (\di \circ \omega_1) (u)\) for the row-wise divergence.

Indeed this can be generalized for higher order operators $\A(D)u = \sum_{|\alpha|=k} A^\alpha \partial^\alpha u$, where $A^\alpha \in \R^{m \times n}$ are coefficient matrices, as follows.
For this we denote the space of symmetric $k$ tensors on $\R^d$ by $\text{Sym}(\R^d,k)$.
Let the $k$-th order divergence be given as
\begin{align} \label{eq:generalDiv}
 \di^k: C^\infty(\R^d;\R^m \otimes \text{Sym}(\R^d,k)) \to C^\infty(\R^d;\R^m), \, (\di^k u)_{j} := \sum_{1 \leq i_1 \leq \dots \leq i_k \leq d} \partial_{i_1} \dots \partial_{i_k} u_{j i_1 \dots i_k} .
\end{align}
For $k=1$ this is exactly the row-wise divergence as mentioned above.

\begin{rmk}
 This definition is natural in that sense that this operator (up to a sign) is the adjoint of the $k$-th derivative $D^k$.
\end{rmk}

The linear map $\omega_k: \R^n \to \R^{m} \otimes \text{Sym}(\R^d,k)$ then takes the form 
\begin{align} \label{eq:LinearTransform}
 (\omega_k(x))_{j i_1 \dots i_k} := \Big( A^{\sum_{l=1}^k e_{i_l}}x \Big)_j
\end{align}
and by this choice it holds $\A(D) u= (\di^k \circ \omega_k)(u)$ for any $u \in C^\infty(\R^d;\R^n)$ and $\ker \omega_k = I_{\A} = \bigcap_{|\alpha|=k} A^\alpha$.
In what follows, we omit the $k$ dependence of $\omega = \omega_k$ in the notation.

With this in hand, it is possible to bound the energy for a general homogeneous linear operator \(\A(D)\) (of order $k$) by the corresponding energy for the ($k$-th order) divergence (c.f. \cite[Appendix]{ST21} for the corresponding qualitative result in the case $k=1$).

\begin{prop}
\label{prop:div_reduction}
Let $d,n,k \in \N$, $\K \subset \R^n$, and let $\Omega \subset \R^d$ be a bounded Lipschitz domain.
Let $\A(D)$ be a $k$-th order homogeneous linear differential as in \eqref{eq:operator} and the elastic and surface energies be given by \eqref{eq:E-Elastic} and \eqref{eq:E-Surface}. Moreover let $\omega=\omega_k$ be the linear transformation in \eqref{eq:LinearTransform} and $\di^k$ the generalized $k$-th order divergence in \eqref{eq:generalDiv} with the corresponding energies $E_{el}^{\di^k}, E_{surf}^{\di^k}$.
Then there exist constants $C_1,C_2 > 0$ such that for any $\chi \in BV(\Omega;\K), F \in \R^{n}$
\begin{align*}
 E_{el}(\chi;F) & \geq C_1 E_{el}^{\di^k}(\omega(\chi); \omega(F)), \\
 E_{surf}(\chi) & \geq C_2 E_{surf}^{\di^k}(\omega(\chi)).
\end{align*}
Moreover if $\A(D)$ is cocanceling (and thus $\omega$ is injective), it also holds for all $u \in \mathcal{D}_F, \chi \in BV(\Omega;\K)$ (cf. \eqref{eq:admissible})
\begin{align*}
 E_{el}(u,\chi) + \epsilon E_{surf}(\chi) \sim E_{\epsilon}^{\di^k}(\omega(u),\omega(\chi)) + \epsilon E_{surf}^{\di^k}(\omega(\chi)).
\end{align*}
\end{prop}

\begin{proof}
In order to obtain the desired result, we use the pointwise bound \(|u-\chi| \geq C |\omega(u)-\omega(\chi)|\) and consider the adapted boundary data:
For any \(u \in \mathcal{D}_F\) with $\mathcal{D}_{F}$ denoting the set from \eqref{eq:admissible}, the composition \(\omega(u)\) satisfies \(\di^k \omega(u) = \A(D) u = 0\) in \(\R^d\) and \(\omega(u) = \omega(F)\) in \(\R^d \setminus \overline{\Omega}\). In other words, it holds that \(\omega(u) \in \mathcal{D}_{\omega(F)}^{\di^k}\) for the divergence operator and boundary data \(\omega(F)\).
For the elastic energy (denoting by \(E_{el}^{\di^k}, \mathcal{D}_{\omega(F)}^{\di^k}\) the energy and domain for the divergence operator) this implies
\begin{align}
E_{el}(u,\chi) & = \int_\Omega |u-\chi|^2 dx \geq C \int_\Omega|\omega(u)-\omega(\chi)|^2 dx = C E_{el}^{\di^k}(\omega(u),\omega(\chi)), \label{eq:ReductionDivLower} \\
 E_{el}(\chi;F) & \geq C \inf_{u \in \mathcal{D}_F} \int_\Omega | \omega(u) - \omega(\chi)|^2 dx \geq C \inf_{u : \omega(u) \in \mathcal{D}_{\omega(F)}^{\di^k}} \int_\Omega |\omega(u) - \omega(\chi)|^2 dx. \nonumber
\end{align}
We emphasize that, in general, this only yields lower bound \emph{inequalities} since \(\omega\) is possibly not injective and thus there may be deformations \(u\) with \(\A(D) u = 0\) and \(u \neq F\) outside \(\Omega\) but still fulfilling \(\omega(u) = \omega(F)\) outside \(\Omega\) (see the example in \cref{sec:ComparisonDivergenceProblem} below).
Replacing now \(\omega(u)\) by a general function \(w: \R^d \to \R^{m} \otimes \text{Sym}(\R^d,k))\) such that \(w \in \mathcal{D}_{\omega(F)}^{\di^k}\) yields
\begin{align*}
 E_{el}(\chi;F) \geq C \inf_{w \in \mathcal{D}_{\omega(F)}^{\di^k}} \int_\Omega |w - \omega(\chi)|^2 dx = C E_{el}^{\di^k}(\omega(\chi);\omega(F)).
\end{align*}
Furthermore, as \(|\nabla \chi| \geq c |\nabla (\omega(\chi))|\), we can also bound the surface energy
\begin{align} \label{eq:ReductionDivLowerSurf}
 E_{surf}(\chi) = \int_\Omega |\nabla \chi| \geq c \int_\Omega |\nabla (\omega(\chi))| = c E_{surf}^{\di^k}(\omega(\chi)).
\end{align}

In the case of a cocanceling operator $\omega$ is injective and we also have the bounds $|u-\chi| \leq C|\omega(u)-\omega(\chi)|, |\nabla \chi| \leq C |\nabla (\omega(\chi))|$, thus in \eqref{eq:ReductionDivLower} and \eqref{eq:ReductionDivLowerSurf} also the matching upper bounds hold, which concludes the proof.
\end{proof}

As a consequence, lower bounds for the divergence operator often also imply lower bounds for more general operators. A particular setting (see \cite{ST21}) for instance arises in the three state problem with \(\K=\{A_1,A_2,A_3\}\) being such that \(A_j - A_k \notin \Lambda_\A\). In this case also \(\omega(\K)\) consists of three states which is a result of the fact that the kernel of \(\omega\) is given by
\begin{align*}
 \ker(\omega) = \bigcap_{j=1}^d \ker A^j = \bigcap_{j=1}^d \ker \AA(e_j) = I_\A.
\end{align*}
In particular, if we find a \(T_3\) structure for a general linear, homogeneous, constant coefficient, first order differential operator \(\A(D)\) such that it is mapped to the \(T_3\) structure in \cref{sec:T3}, we can exploit the same lower bound as for the divergence operator. In addition to the relevance of the divergence operator for applications, this argument serves as an additional motivation for focusing particularly on the divergence operator in this article.

Moreover with \cref{prop:reduction_cocancelling} in mind, also for pairwise non super-compatible wells, we can assume without loss of generality that $I_\A = \{0\}$ and thus $\omega$ in injective.

\subsection{Comparison of the two-state problem for the divergence operator} \label{sec:ComparisonDivergenceProblem}

In this section, we discuss the comparison between the general two-state problem for linear, homogeneous differential operators and the one for the ($k$-th order) divergence operator. In particular, this yields yet another proof of the compatible case in \cref{thm:TwoWell}.

In the calculations from the first part of \cref{sec:RoleDivergence}, we notice that for an injective map $\omega$ we can also bound the quantities $E_{el}(u,\chi),E_{surf}(\chi)$ from above with the corresponding term in which $u,\chi$ are replaced by $\omega(u),\omega(\chi)$; hence for $I_\A = \{0\}$ it holds
\begin{align*}
 E_{\epsilon}(u,\chi) \sim E_{\epsilon}^{\di^k}(\omega(u),\omega(\chi)).
\end{align*}
We here emphasize that this only holds on the level of \emph{fixed} $u,\chi$ and that after the minimization in $u$, this does not necessarily yield a two-sided comparison of the energies any more. Indeed, while the lower bound estimates always hold (c.f. \cref{prop:div_reduction}), this may not be true for the upper bound estimates. In fact, even if $I_\A = \{0\}$, we can at the moment not exclude that there may be $w \in \mathcal{D}_{\omega(F)}^{\di^k} \setminus \omega(\mathcal{D}_F)$. We postpone a further discussion of this to future work.

The advantage of $I_\A = \{0\}$ is that we do not lose wells in that sense that for $I_\A = \{0\}$ also $\ker \omega = \{0\}$ and thus, $\omega$ is injective.
As seen above in \cref{sec:super} for two wells $A,B \in \R^n, A-B \notin I_\A$ we can restrict to $\tilde{A}(D)$ which fulfills $I_{\tilde{\A}} = \{0\}$.
This implies that for two compatible wells, which are not super-compatible, in deducing lower scaling bounds, we can use the corresponding lower bounds of the divergence operator as we do not lose information.

\begin{example}
 In concluding this section, we give an example of an operator which is not cocanceling.
 Considering \(d=2,n=3,m=1\) and 
\begin{align*}
 \A(D) u = \p_1 u_2 + \p_2 u_3,
\end{align*}
implies that \(\omega: \R^3 \to \R^{1\times2}, \omega(x) = (x_2 ,\ x_3)\) and \(\ker(\omega) = \vspan(e_1) = I_\A\).

The reduced operator $\tilde{\A}(D)$ would act on mappings taking values only in $\{0\} \times \R^2 \subset \R^3$.
\end{example}

\section*{Acknowledgements}

A.R. and C.T. gratefully acknowledge support through the Heidelberg STRUCTURES Excellence Cluster which
is funded by the Deutsche Forschungsgemeinschaft (DFG, German Research Foundation)
under Germany’s Excellence Strategy EXC 2181/1 - 390900948.

\bibliographystyle{alpha}
\bibliography{refs.bib}

\newcommand{\etalchar}[1]{$^{#1}$}
\begin{thebibliography}{ARDPHR19}

\bibitem[ARDPHR19]{ARDPHR19}
Adolfo Arroyo-Rabasa, Guido De~Philippis, Jonas Hirsch, and Filip Rindler.
\newblock Dimensional estimates and rectifiability for measures satisfying
  linear {PDE} constraints.
\newblock {\em Geometric and Functional Analysis}, 29(3):639--658, 2019.

\bibitem[ARDPR20]{ARDPR20}
Adolfo Arroyo-Rabasa, Guido De~Philippis, and Filip Rindler.
\newblock Lower semicontinuity and relaxation of linear-growth integral
  functionals under {PDE} constraints.
\newblock {\em Advances in calculus of variations}, 13(3):219--255, 2020.

\bibitem[BDG20]{BDG20}
Dominic Breit, Lars Diening, and Franz Gmeineder.
\newblock On the trace operator for functions of bounded
  $\mathbb{A}$-variation.
\newblock {\em Analysis \& PDE}, 13(2):559--594, 2020.

\bibitem[BFJK94]{BFJK94}
Kaushik Bhattacharya, Nikan~B. Firoozye, Richard~D. James, and Robert~V. Kohn.
\newblock Restrictions on microstructure.
\newblock {\em Proceedings of the Royal Society of Edinburgh Section A:
  Mathematics}, 124(5):843--878, 1994.

\bibitem[BGS21]{BGS21}
Linus Behn, Franz Gmeineder, and Stefan Schiffer.
\newblock On symmetric div-quasiconvex hulls and divsym-free
  {$L^{\infty}$}-truncations.
\newblock {\em arXiv preprint arXiv:2108.05757}, 2021.

\bibitem[Bha93]{B93}
Kaushik Bhattacharya.
\newblock Comparison of the geometrically nonlinear and linear theories of
  martensitic transformation.
\newblock {\em Continuum mechanics and thermodynamics}, 5(3):205--242, 1993.

\bibitem[Bha03]{B03}
Kaushik Bhattacharya.
\newblock {\em Microstructure of martensite}.
\newblock Oxford Series on Materials Modelling. Oxford University Press, 2003.

\bibitem[BJ89]{BJ89}
John~M. Ball and Richard~D. James.
\newblock Fine phase mixtures as minimizers of energy.
\newblock In {\em Analysis and continuum mechanics}, pages 647--686. Springer,
  1989.

\bibitem[BJ92]{BJ92}
John~M. Ball and Richard~D. James.
\newblock Proposed experimental tests of a theory of fine microstructure and
  the two-well problem.
\newblock {\em Philosophical Transactions of the Royal Society of London.
  Series A: Physical and Engineering Sciences}, 338(1650):389--450, 1992.

\bibitem[BMS17]{BMS17}
Ana~Cristina Barroso, Jos{\'e} Matias, and Pedro~Miguel Santos.
\newblock Differential {I}nclusions and $\mathcal{A}$-quasiconvexity.
\newblock {\em Mediterranean Journal of Mathematics}, 3(14):1--14, 2017.

\bibitem[CC15]{CC15}
Allan Chan and Sergio Conti.
\newblock Energy scaling and branched microstructures in a model for
  shape-memory alloys with {$SO(2)$} invariance.
\newblock {\em Mathematical Models and Methods in Applied Sciences},
  25(06):1091--1124, 2015.

\bibitem[CDLSJ12]{CDLS12}
Sergio Conti, Camillo De~Lellis, and L{\'a}szl{\'o} Sz{\'e}kelyhidi~Jr.
\newblock h-principle and rigidity for {$C^{1, \alpha}$} isometric embeddings.
\newblock In {\em Nonlinear partial differential equations}, pages 83--116.
  Springer, 2012.

\bibitem[CDMZ20]{CDMZ20}
Sergio Conti, Johannes Diermeier, David Melching, and Barbara Zwicknagl.
\newblock Energy scaling laws for geometrically linear elasticity models for
  microstructures in shape memory alloys.
\newblock {\em ESAIM: Control, Optimisation and Calculus of Variations},
  26:115, 2020.

\bibitem[CDPR{\etalchar{+}}20]{CDPRZZ20}
Pierluigi Cesana, Francesco Della~Porta, Angkana R{\"u}land, Christian
  Zillinger, and Barbara Zwicknagl.
\newblock Exact constructions in the (non-linear) planar theory of elasticity:
  from elastic crystals to nematic elastomers.
\newblock {\em Archive for Rational Mechanics and Analysis}, 237(1):383--445,
  2020.

\bibitem[CG20]{CG20}
Sergio Conti and Franz Gmeineder.
\newblock {$A$}-{Q}uasiconvexity and {P}artial {R}egularity.
\newblock {\em arXiv preprint arXiv:2009.13820}, 2020.

\bibitem[Chi99]{C99}
Michel Chipot.
\newblock The appearance of microstructures in problems with incompatible wells
  and their numerical approach.
\newblock {\em Numerische Mathematik}, 83(3):325--352, 1999.

\bibitem[CK02]{CK02}
Miroslav Chleb{\'\i}k and Bernd Kirchheim.
\newblock Rigidity for the four gradient problem.
\newblock {\em Journal für die reine und angewandte Mathematik},
  2002(551):1--9, 2002.

\bibitem[CKM22]{CKM22}
Sergio Conti, Robert~V. Kohn, and Oleksandr Misiats.
\newblock Energy minimizing twinning with variable volume fraction, for two
  nonlinear elastic phases with a single rank-one connection.
\newblock {\em Mathematical Models and Methods in Applied Sciences}, 2022.

\bibitem[CKO99]{CKO99}
Rustum Choksi, Robert~V. Kohn, and Felix Otto.
\newblock Domain branching in uniaxial ferromagnets: a scaling law for the
  minimum energy.
\newblock {\em Communications in mathematical physics}, 201(1):61--79, 1999.

\bibitem[CM04]{CM04}
Nirmalendu Chaudhuri and Stefan M{\"u}ller.
\newblock Rigidity estimate for two incompatible wells.
\newblock {\em Calculus of Variations and Partial Differential Equations},
  19(4):379--390, 2004.

\bibitem[CO09]{CO09}
Antonio Capella and Felix Otto.
\newblock A rigidity result for a perturbation of the geometrically linear
  three-well problem.
\newblock {\em Communications on Pure and Applied Mathematics: A Journal Issued
  by the Courant Institute of Mathematical Sciences}, 62(12):1632--1669, 2009.

\bibitem[CO12]{CO12}
Antonio Capella and Felix Otto.
\newblock A quantitative rigidity result for the cubic-to-tetragonal phase
  transition in the geometrically linear theory with interfacial energy.
\newblock {\em Proceedings of the Royal Society of Edinburgh Section A:
  Mathematics}, 142(2):273--327, 2012.

\bibitem[Dac07]{D07}
Bernard Dacorogna.
\newblock {\em Direct methods in the calculus of variations}, volume~78.
\newblock Springer Science \& Business Media, 2007.

\bibitem[Die13]{D13}
Johannes Diermeier.
\newblock Domain branching in geometrically linear elasticity.
\newblock 2013.

\bibitem[DiP85]{DP85}
Ronald~J. DiPerna.
\newblock Compensated compactness and general systems of conservation laws.
\newblock {\em Transactions of the American Mathematical Society},
  292(2):383--420, 1985.

\bibitem[DKO06]{DK06}
Antonio DeSimone, Hans Kn{\"u}pfer, and Felix Otto.
\newblock 2-d stability of the {N}{\'e}el wall.
\newblock {\em Calculus of Variations and Partial Differential Equations},
  27(2):233--253, 2006.

\bibitem[DLSJ06]{DLS06}
Camillo De~Lellis and L{\'a}szl{\'o} Sz{\'e}kelyhidi~Jr.
\newblock Simple proof of two-well rigidity.
\newblock {\em Comptes Rendus Mathematique}, 343(5):367--370, 2006.

\bibitem[DLSJ09]{DLS09}
Camillo De~Lellis and L{\'a}szl{\'o} Sz{\'e}kelyhidi~Jr.
\newblock The {E}uler equations as a differential inclusion.
\newblock {\em Annals of mathematics}, 170(3):1417--1436, 2009.

\bibitem[DLSJ12]{DLS12}
Camillo De~Lellis and L{\'a}szl{\'o} Sz{\'e}kelyhidi~Jr.
\newblock The {$h$}-principle and the equations of fluid dynamics.
\newblock {\em Bulletin of the American Mathematical Society}, 49(3):347--375,
  2012.

\bibitem[DLSJ15]{DLS15}
Camillo De~Lellis and L{\'a}szl{\'o} Sz{\'e}kelyhidi~Jr.
\newblock On h-principle and {O}nsager’s conjecture.
\newblock {\em Eur. Math. Soc. Newsl}, 95:19--24, 2015.

\bibitem[DM12]{DM12}
Bernard Dacorogna and Paolo Marcellini.
\newblock {\em Implicit partial differential equations}, volume~37.
\newblock Springer Science \& Business Media, 2012.

\bibitem[DPPR18]{DPPR18}
Guido De~Philippis, Luca Palmieri, and Filip Rindler.
\newblock On the two-state problem for general differential operators.
\newblock {\em Nonlinear Analysis}, 177:387--396, 2018.

\bibitem[DPR16]{DPR16}
Guido De~Philippis and Filip Rindler.
\newblock On the structure of $\mathcal{A}$-free measures and applications.
\newblock {\em Annals of Mathematics}, 184(3):1017--1039, 2016.

\bibitem[FJM02]{FJM02}
Gero Friesecke, Richard~D. James, and Stefan M{\"u}ller.
\newblock A theorem on geometric rigidity and the derivation of nonlinear plate
  theory from three-dimensional elasticity.
\newblock {\em Communications on Pure and Applied Mathematics: A Journal Issued
  by the Courant Institute of Mathematical Sciences}, 55(11):1461--1506, 2002.

\bibitem[FM99]{FM99}
Irene Fonseca and Stefan M{\"u}ller.
\newblock {$\mathcal{A}$}-{Q}uasiconvexity, {L}ower {S}emicontinuity, and
  {Y}oung {M}easures.
\newblock {\em SIAM Journal on Mathematical Analysis}, 30(6):1355--1390, 1999.

\bibitem[FSJ08]{FS08}
Daniel Faraco and L{\'a}szl{\'o} Sz{\'e}kelyhidi~Jr.
\newblock Tartar’s conjecture and localization of the quasiconvex hull in
  {$\mathbb{R}^{2\times 2}$}.
\newblock {\em Acta mathematica}, 200(2):279--305, 2008.

\bibitem[GLN22]{GLN22}
Franz Gmeineder, Peter Lewintan, and Patrizio Neff.
\newblock Optimal incompatible {K}orn-{M}axwell-{S}obolev inequalities in all
  dimensions.
\newblock {\em arXiv preprint arXiv:2206.10373}, 2022.

\bibitem[GN04]{Garroni04}
Adriana Garroni and Vincenzo Nesi.
\newblock Rigidity and lack of rigidity for solenoidal matrix fields.
\newblock {\em Proceedings of the Royal Society of London. Series A:
  Mathematical, Physical and Engineering Sciences}, 460(2046):1789--1806, 2004.

\bibitem[Gra14]{G14}
Loukas Grafakos.
\newblock {\em Classical Fourier analysis}, volume 249 of {\em Graduate Texts
  in Mathematics}.
\newblock Springer, 2014.

\bibitem[GRS21]{GRS21}
Andr{\'e} Guerra, Bogdan Rai{\c{t}}{\u{a}}, and Matthew Schrecker.
\newblock Compensation phenomena for concentration effects via nonlinear
  elliptic estimates.
\newblock {\em arXiv preprint arXiv:2112.10657}, 2021.

\bibitem[GRS22]{GRS20}
Andr{\'e} Guerra, Bogdan Rai{\c{t}}{\u{a}}, and Matthew~R.I. Schrecker.
\newblock Compensated compactness: Continuity in optimal weak topologies.
\newblock {\em Journal of Functional Analysis}, 283(7):109596, 2022.

\bibitem[KDOM06]{KDOM06}
Robert~V. Kohn, Antonio DeSimone, Felix Otto, and Stefan M{\"u}ller.
\newblock Recent analytical developments in micromagnetics.
\newblock {\em The science of hysteresis}, 2:269--381, 2006.

\bibitem[KK11]{KK11}
Hans Kn{\"u}pfer and Robert~V. Kohn.
\newblock Minimal energy for elastic inclusions.
\newblock {\em Proceedings of the Royal Society A: Mathematical, Physical and
  Engineering Sciences}, 467(2127):695--717, 2011.

\bibitem[KKO13]{KKO13}
Hans Kn{\"u}pfer, Robert~V Kohn, and Felix Otto.
\newblock Nucleation barriers for the cubic-to-tetragonal phase transformation.
\newblock {\em Communications on pure and applied mathematics}, 66(6):867--904,
  2013.

\bibitem[KM92]{KM92}
Robert~V. Kohn and Stefan M{\"u}ller.
\newblock Branching of twins near an austenite—twinned-martensite interface.
\newblock {\em Philosophical Magazine A}, 66(5):697--715, 1992.

\bibitem[KM94]{KM94}
Robert~V. Kohn and Stefan M{\"u}ller.
\newblock Surface energy and microstructure in coherent phase transitions.
\newblock {\em Communications on Pure and Applied Mathematics}, 47(4):405--435,
  1994.

\bibitem[KM{\v{S}}03]{KMS03}
Bernd Kirchheim, Stefan M{\"u}ller, and Vladim{\'\i}r {\v{S}}ver{\'a}k.
\newblock Studying nonlinear {PDE} by geometry in matrix space.
\newblock In {\em Geometric analysis and nonlinear partial differential
  equations}, pages 347--395. Springer, 2003.

\bibitem[Koh07]{K07}
Robert~V. Kohn.
\newblock Energy-driven pattern formation.
\newblock In {\em International Congress of Mathematicians}, volume~1, pages
  359--383, 2007.

\bibitem[KR20]{KR20}
Jan Kristensen and Bogdan Raita.
\newblock An introduction to generalized {Y}oung measures.
\newblock {\em Max-Planck-Institut f{\"u}r Mathematik in den
  Naturwissenschaften Leipzig}, (45), 2020.

\bibitem[Kui55]{K55}
Nicolaas~H. Kuiper.
\newblock On {$C^1$}-isometric imbeddings. {II}.
\newblock In {\em Indagationes Mathematicae (Proceedings)}, volume~58, pages
  683--689. Elsevier, 1955.

\bibitem[KW14]{KW14}
Robert~V. Kohn and Benedikt Wirth.
\newblock Optimal fine-scale structures in compliance minimization for a
  uniaxial load.
\newblock {\em Proceedings of the Royal Society A: Mathematical, Physical and
  Engineering Sciences}, 470(2170):20140432, 2014.

\bibitem[KW16]{KW16}
Robert~V. Kohn and Benedikt Wirth.
\newblock Optimal fine-scale structures in compliance minimization for a shear
  load.
\newblock {\em Communications on Pure and Applied Mathematics},
  69(8):1572--1610, 2016.

\bibitem[LLP20]{LLP20}
Xavier Lamy, Andrew Lorent, and Guanying Peng.
\newblock Rigidity of a non-elliptic differential inclusion related to the
  {A}viles--{G}iga conjecture.
\newblock {\em Archive for Rational Mechanics and Analysis}, 238(1):383--413,
  2020.

\bibitem[LLP22a]{LLP22}
Xavier Lamy, Andrew Lorent, and Guanying Peng.
\newblock On a generalized {A}viles-{G}iga functional: compactness, zero-energy
  states, regularity estimates and energy bounds.
\newblock {\em arXiv preprint arXiv:2203.05418}, 2022.

\bibitem[LLP22b]{LLP22b}
Xavier Lamy, Andrew Lorent, and Guanying Peng.
\newblock Quantitative rigidity of differential inclusions in two dimensions.
\newblock {\em arXiv preprint arXiv:2208.08526}, 2022.

\bibitem[M{\v{S}}99]{MS99}
Stefan M{\"u}ller and Vladimir {\v{S}}ver{\'a}k.
\newblock Convex integration with constraints and applications to phase
  transitions and partial differential equations.
\newblock {\em Journal of the European Mathematical Society}, 1(4):393--422,
  1999.

\bibitem[M{\v{S}}03]{MS03}
Stefan M{\"u}ller and Vladimir {\v{S}}ver{\'a}k.
\newblock Convex integration for {L}ipschitz mappings and counterexamples to
  regularity.
\newblock {\em Annals of mathematics}, 157(3):715--742, 2003.

\bibitem[MT18]{MT18}
Fran{\c{c}}ois Murat and Luc Tartar.
\newblock H-convergence.
\newblock In {\em Topics in the mathematical modelling of composite materials},
  pages 21--43. Springer, 2018.

\bibitem[M{\"u}l99a]{M99}
Stefan M{\"u}ller.
\newblock Rank-one convexity implies quasiconvexity on diagonal matrices.
\newblock {\em International Mathematics Research Notices},
  1999(20):1087--1095, 1999.

\bibitem[M{\"u}l99b]{M99b}
Stefan M{\"u}ller.
\newblock Variational models for microstructure and phase transitions.
\newblock In {\em Calculus of variations and geometric evolution problems},
  pages 85--210. Springer, 1999.

\bibitem[Nas54]{N54}
John Nash.
\newblock {$C^1$} isometric imbeddings.
\newblock {\em Annals of mathematics}, 60(3):383--396, 1954.

\bibitem[OV10]{OV10}
Felix Otto and Thomas Viehmann.
\newblock Domain branching in uniaxial ferromagnets: asymptotic behavior of the
  energy.
\newblock {\em Calculus of variations and partial differential equations},
  38(1):135--181, 2010.

\bibitem[Pal10]{P10}
Mariapia Palombaro.
\newblock Rank-{$(n-1)$} convexity and quasiconvexity for divergence free
  fields.
\newblock {\em Advances in Calculus of Variations}, 3(3):279--285, 2010.

\bibitem[Ped97]{P97}
Pablo Pedregal.
\newblock {\em Parametrized measures and variational principles}.
\newblock Springer Science \& Business Media, 1997.

\bibitem[PP04]{Palombaro04}
Mariapia Palombaro and Marcello Ponsiglione.
\newblock The three divergence free matrix fields problem.
\newblock {\em Asymptotic Analysis}, 40(1):37--49, 2004.

\bibitem[PS09]{PS09}
Mariapia Palombaro and Valery~P. Smyshlyaev.
\newblock Relaxation of three solenoidal wells and characterization of extremal
  three-phase {H}-measures.
\newblock {\em Archive for rational mechanics and analysis}, 194(3):775--822,
  2009.

\bibitem[PW21]{PW21}
Jonas Potthoff and Benedikt Wirth.
\newblock Optimal fine-scale structures in compliance minimization for a
  uniaxial load in three space dimensions.
\newblock {\em arXiv preprint arXiv:2111.06910}, 2021.

\bibitem[Rai19]{R19}
Bogdan Rai{\c{t}}{\u{a}}.
\newblock Potentials for {$\mathcal{A}$}-quasiconvexity.
\newblock {\em Calculus of Variations and Partial Differential Equations},
  58(3):105, 2019.

\bibitem[Rai21]{R21}
Bogdan Rai{\c{t}}{\u{a}}.
\newblock A simple construction of potential operators for compensated
  compactness.
\newblock {\em arXiv preprint arXiv:2112.11773}, 2021.

\bibitem[RT21]{RT21}
Angkana R{\"u}land and Antonio Tribuzio.
\newblock On the energy scaling behaviour of singular perturbation models
  involving higher order laminates.
\newblock {\em arXiv preprint arXiv:2110.15929}, 2021.

\bibitem[RT22a]{RT22}
Angkana R{\"u}land and Antonio Tribuzio.
\newblock On scaling laws for multi-well nucleation problems without gauge
  invariances.
\newblock {\em arXiv preprint arXiv:2206.05164}, 2022.

\bibitem[RT22b]{Ruland22}
Angkana R{\"u}land and Antonio Tribuzio.
\newblock On the energy scaling behaviour of a singularly perturbed {T}artar
  square.
\newblock {\em Archive for Rational Mechanics and Analysis}, 243(1):401--431,
  2022.

\bibitem[RTZ19]{RTZ19}
Angkana R{\"u}land, Jamie~M. Taylor, and Christian Zillinger.
\newblock Convex integration arising in the modelling of shape-memory alloys:
  some remarks on rigidity, flexibility and some numerical implementations.
\newblock {\em Journal of Nonlinear Science}, 29(5):2137--2184, 2019.

\bibitem[R{\"u}l16a]{Rue16b}
Angkana R{\"u}land.
\newblock The cubic-to-orthorhombic phase transition: rigidity and non-rigidity
  properties in the linear theory of elasticity.
\newblock {\em Archive for Rational Mechanics and Analysis}, 221(1):23--106,
  2016.

\bibitem[R{\"u}l16b]{R16}
Angkana R{\"u}land.
\newblock A rigidity result for a reduced model of a cubic-to-orthorhombic
  phase transition in the geometrically linear theory of elasticity.
\newblock {\em Journal of Elasticity}, 123(2):137--177, 2016.

\bibitem[SJ04]{S04}
L{\'a}szl{\'o} Sz{\'e}kelyhidi~Jr.
\newblock The regularity of critical points of polyconvex functionals.
\newblock {\em Archive for rational mechanics and analysis}, 172(1):133--152,
  2004.

\bibitem[ST21]{ST21}
Massimo Sorella and Riccardo Tione.
\newblock The four-state problem and convex integration for linear differential
  operators.
\newblock {\em arXiv preprint arXiv:2107.10785}, 2021.

\bibitem[SW19]{SW19}
Jack~W.D. Skipper and Emil Wiedemann.
\newblock Lower semi-continuity for $\mathcal{A}$-quasiconvex functionals under
  convex restrictions.
\newblock {\em arXiv preprint arXiv:1909.11543}, 2019.

\bibitem[Tar79]{T79}
Luc Tartar.
\newblock Compensated compactness and applications to partial differential
  equations.
\newblock In {\em Nonlinear analysis and mechanics: Heriot-Watt symposium},
  volume~4, pages 136--212, 1979.

\bibitem[Tar83]{T83}
Luc Tartar.
\newblock The compensated compactness method applied to systems of conservation
  laws.
\newblock In {\em Systems of nonlinear partial differential equations}, pages
  263--285. Springer, 1983.

\bibitem[Tar93]{T93}
Luc Tartar.
\newblock Some remarks on separately convex functions.
\newblock In {\em Microstructure and phase transition}, pages 191--204.
  Springer, 1993.

\bibitem[VS13]{VS13}
Jean Van~Schaftingen.
\newblock Limiting sobolev inequalities for vector fields and canceling linear
  differential operators.
\newblock {\em Journal of the European Mathematical Society}, 15(3):877--921,
  2013.

\bibitem[Win97]{W97}
Matthias Winter.
\newblock An example of microstructure with multiple scales.
\newblock {\em European Journal of Applied Mathematics}, 8(2):185--207, 1997.

\bibitem[WT07]{W07}
Mark Warner and Eugene~Michael Terentjev.
\newblock {\em Liquid crystal elastomers}, volume 120.
\newblock Oxford university press, 2007.

\end{thebibliography}

\end{document}